\documentclass[11pt]{amsart}

\usepackage{amsthm, amsfonts, amssymb, amscd, rotating}
\usepackage[pagebackref,colorlinks]{hyperref}
\usepackage{tikz-cd}
\usepackage{geometry}
\usepackage{marginnote}
\usepackage{aligned-overset}
\usepackage[utf8]{inputenc}
\usepackage{xcolor}
\definecolor{darkgreen}{rgb}{0,0.5,0}
\usepackage{mathtools}

\theoremstyle{definition}
\newtheorem{ntn}{Notation}[section]
\newtheorem{dfn}[ntn]{Definition}
\theoremstyle{plain}
\newtheorem{lem}[ntn]{Lemma}
\newtheorem{prp}[ntn]{Proposition}
\newtheorem{thm}[ntn]{Theorem}
\newtheorem{introthm}{Theorem}
\newtheorem*{introprp}{Proposition E}
\newtheorem{cor}[ntn]{Corollary}

\theoremstyle{definition}

\newtheorem{rem}[ntn]{Remark}

\numberwithin{equation}{section}

\newcommand{\z}{\mathbb{Z}}

\newcommand{\F}{\mathbb{F}}

\newcommand{\m}{\mathfrak{m}}

\newcommand{\OO}{\mathcal{O}}

\newcommand{\Sl}{\mathfrak{sl}}

\newcommand{\ppp}{\mathfrak{p}}
\newcommand{\qqq}{\mathfrak{q}}

\newcommand{\arr}{\rightarrow}

\newcommand{\Gg}{\Gamma}
\newcommand{\GL}{{\rm GL}}
\newcommand{\PGL}{{\rm PGL}}

\newcommand{\SL}{{\rm SL}}
\newcommand{\GE}{{\rm GE}}
\newcommand{\Ee}{{\rm E}}

\renewcommand{\char}{{\rm char}}

\newcommand{\im}{{\rm im}}

\newcommand{\SK}{{\rm SK}}

\newcommand{\ab}{{\rm ab}}

\newcommand{\Kk}{{\rm K}}

\newcommand{\Spec}{{\rm Spec}}

\renewcommand{\aa}{{A^\times}}

\newtheoremstyle{athm}
{}
{}
{\itshape}
{}
{\scshape}
{}
{.5em}
{\thmnote{#3}}
\theoremstyle{athm}

\begin{document}

\title[On the abelianization of congruence subgroups of $\SL_2$ over $S$-integers]
{On the abelianization of congruence subgroups of $\SL_2$ over $S$-integers}
\author[P. H. Amorim]{Pedro H. Amorim}
\author[I. V. Picinini]{Isadora V. Picinini}
\author[B. R. Ramos]{Bruno R. Ramos}
\author[T. Verissimo]{Thiago Verissimo}
\address{\sf Instituto de Ci\^encias Matem\'aticas e de Computa\c{c}\~ao (ICMC), 
Universidade de S\~ao Paulo, S\~ao Carlos, São Paulo, Brasil}
\address{ \sf Department of Mathematics, Aarhus University, Ny Munkegade 118, 8000 Aarhus C, Denmark
}
\email{amorim.alves@usp.br}
\email{isadoravanzella@usp.br}
\email{brramos2050@gmail.br}
\email{thiagovlg@usp.br}

\begin{abstract}
In this work, we compute the first integral homology, or abelianization, of the congruence subgroups $\Gg(A, \m_A), \Gg_1(A, \m_A)$, and $\Gg_0(A, \m_A)$ for a local ring $A$ with maximal ideal $\m_A$, showing that $H_1(\Gg(A, \m_A), \z)$ is isomorphic to the additive group of $\mathfrak{sl}_2(\m_A/\m_A^2)$. We then use these results to determine the structure of the groups $H_1(\Gg(\mathcal{O}_{K, S}, \ppp), \z)$, $H_1(\Gg_1(\mathcal{O}_{K, S}, \ppp), \z)$ and $H_1(\Gg_0(\mathcal{O}_{K, S}, \ppp), \z)$, where $\OO_{K,S}$ is a Dedekind domain of arithmetic type, not totally imaginary, $|S| \geq 2$, and $\ppp$ is a nonzero prime ideal. The computations are given in terms of the residue field $\kappa(\ppp)$ and the known $H_1(\SL_2(\OO_{K, S}), \z)$. As a consequence, we also obtain the torsion subgroup of their second integral cohomology. These results will be of paramount importance for a forthcoming work concerning $H_2(\SL_2(\mathcal{O}_{K,S}), \z)$.\\

\noindent MSC(2020): 11F75, 20H05.\\

\noindent Key words: Homology of arithmetic groups, Congruence subgroups, Dedekind domains of arithmetic type, Special linear group.
\end{abstract}

\maketitle

%%%%%%%%%%%%%%%%%%%%%%%%%%%%%%%%%%%%%%%%%%%%%%%%%%%%%%%%%%%%%%%%%%%%%%%%%%%%%%%%%%
\section*{Introduction} \label{int}

The power of (co)homology of arithmetic groups lies in its ability to encode information about spaces in a computable way. Consequently, it has seen an utmost impact in various branches of mathematics, finding applications in algebraic $K$-theory, hyperbolic geometry, and arithmetic geometry. However, describing the homology groups explicitly is generally a tough task, and has been a central subject of recent research articles (see, for example, \cite{h2016}, \cite{BBT2025-2}, \cite{BBT2025-1}, and \cite{B-E2024}). 

Computing these groups is of particular significance for number theory, as seen by the study of reciprocity laws via Hecke operators \cite{venkatesh2019} and the theory of automorphic forms \cite{venkatesh2017}. Moreover, in an upcoming work \cite{PIBT}, we establish valuable connections between $H_2(\SL_2(\OO_{K,S}), \z)$ and the special value $\zeta_K(-1)$ of the Dedekind zeta function associated to the field of fractions $K$. For this, we rely on explicit computations on the homology of congruence subgroups, done in this article.

This justifies the main goal of this article: to investigate the first integral homology of
\[\Gg(A, \ppp) = \bigg\{\begin{pmatrix}
        a & b \\ c & d
    \end{pmatrix} \in \SL_2(A) : a-1,\ d-1,\ b,\ c \in \ppp \bigg\},\]
\[\Gg_1(A, \ppp) = \bigg\{\begin{pmatrix}
        a & b \\ c & d
    \end{pmatrix} \in \SL_2(A) : a-1,\ d-1,\ c \in \ppp \bigg\},\]
and
\[\Gg_0(A, \ppp) = \bigg\{\begin{pmatrix}
        a & b \\ c & d
    \end{pmatrix} \in \SL_2(A) :  c \in \ppp \bigg\}, \]
where $A$ is a ring of $S$-integers, not totally imaginary, $|S| \geq 2$, and $\ppp$ is a nonzero prime ideal. We generalize the main part of \cite[Theorem C]{BBT2025-1}, which computes the abelianization of $\Gg_0(\z[1/n], \ppp)$ under some divisibility conditions. The first homology of the principal congruence subgroups was studied before by \cite{lee-szczarba}, but for $A = \z$ and matrices in $\SL_n(A)$, $n \geq 3$.

Our strategy consists of using the classic Congruence Subgroup Property \cite{serre1970} to reduce the problem to the computation of $\Gg(A/\ppp^{\alpha}, \ppp/\ppp^{\alpha})^{\ab}$, $\Gg_1(A/\ppp^{\alpha}, \ppp/\ppp^{\alpha})^{\ab}$ and $\Gg_0(A/\ppp^{\alpha}, \ppp/\ppp^{\alpha})^{\ab}$. For this reason, Section \ref{sec:local} is dedicated to describing the abelianization of congruence subgroups over a local ring $A$, with maximal ideal $\m_A$. The main result of this part relates $\Gg(A, \m_A)^{\ab}$ to the additive group of the Lie algebra $\mathfrak{sl}_2(\m_A/\m_A^2)$, formed by matrices with trace $0$ and entries in $\m_A/\m_A^2$. Notably, there is a decomposition
\[ \mathfrak{sl}_2(\m_A/\m_A^2) = \mathfrak{n}^{-} \oplus \mathfrak{h} \oplus \mathfrak{n}^{+},\]
where $\mathfrak{n}^-$, $\mathfrak{n}^{+}$ and $\mathfrak{h}$ are, respectively, the subalgebras formed by strictly lower triangular, strictly upper triangular, and diagonal matrices in $\mathfrak{sl}_2(\m_A/\m_A^2)$ \cite[p. 35]{humphreys}. Again, we see them as additive groups. 

%{color{red}{additive groups}} generated, respectively, by matrices of the form $\Ee_{12}(a), \Ee_{21}(b)$ and $D(1+c)$, for $a, b , c\in \m_A/\m_A^2$ {\color{red}{what are these matrices, specially $D(1+c)$? what is $\Ee_{21}(x)$ 
%for $x \in \m_A/\m_A^2$? ect.}}.

\begin{introthm}[\ref{H1(Gammaloc)}]
    Let $A$ be a local ring such that $\char(A/\m_A) \neq 2$. Then 
        \[
        \Gamma(A,\m_A)^\ab \simeq \mathfrak{sl}_2(\m_A/\m_A^2).
        \]
\end{introthm}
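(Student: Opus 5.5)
The plan is to build an explicit surjective homomorphism $\Gamma(A,\m_A)\to \mathfrak{sl}_2(\m_A/\m_A^2)$ whose kernel is exactly the commutator subgroup. Define
\[
\phi\colon \Gamma(A,\m_A)\to \mathfrak{sl}_2(\m_A/\m_A^2),\qquad g\mapsto (g-I) \bmod \m_A^2 .
\]

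\textbf{Step 1: $\phi$ is a homomorphism onto $\mathfrak{sl}_2(\m_A/\m_A^2)$.}
\begin{itemize}
\item For $g=I+X$ and $h=I+Y$ with $X,Y\in M_2(\m_A)$ we have $gh=I+X+Y+XY$ and $XY\in M_2(\m_A^2)$. Hence $\phi(gh)=\phi(g)+\phi(h)$.
\item Since $\det(I+X)=1+\operatorname{tr}X+\det X=1$ and $\det X\in \m_A^2$, we get $\operatorname{tr}X\in \m_A^2$. So $\phi$ does land in the trace-zero matrices.
\item For surjectivity, $\Ee_{12}(x)$ and $\Ee_{21}(x)$ with $x\in\m_A$ hit $\mathfrak n^{\pm}$.
\item The diagonal matrix $D(u)=\mathrm{diag}(u,u^{-1})$ with $u=1+x$ maps to $\mathrm{diag}(x,-x)$, because $u^{-1}\equiv 1-x \pmod{\m_A^2}$. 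This covers $\mathfrak h$.
\end{itemize}
The target is abelian, so $\phi$ factors through $\Gamma(A,\m_A)^{\ab}$. It remains to show $\ker\phi=\Gamma(A,\m_A^2)\subseteq[\Gamma(A,\m_A),\Gamma(A,\m_A)]$.

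\textbf{Step 2: generators of $\Gamma(A,\m_A)$.} Since $A$ is local, every $g\in\Gamma(A,\m_A)$ has unit diagonal entry $a\equiv 1$. Row and column reduction therefore gives
\[
g=\Ee_{21}(c')\,D(a)\,\Ee_{12}(b'),\qquad b',c'\in\m_A,\ a\in 1+\m_A .
\]
It follows that $\Gamma(A,\m_A)$ is generated by the elements $\Ee_{12}(x)$, $\Ee_{21}(x)$ and $D(u)$ with $x\in\m_A$ and $u\in 1+\m_A$. By the same decomposition, an element of $\Gamma(A,\m_A^2)$ is a product of $\Ee_{12}(y)$, $\Ee_{21}(y)$ with $y\in\m_A^2$ and $D(u)$ with $u\in 1+\m_A^2$. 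So it suffices to show that each of these three kinds of generator lies in the commutator subgroup.

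\textbf{Step 3: commutator identities.} These use $\char(A/\m_A)\ne 2$, so that $2$ is a unit in $A$.
\begin{itemize}
\item We have $[D(u),\Ee_{12}(x)]=\Ee_{12}((u^2-1)x)$. Taking $u=1+t$ with $t\in\m_A$ gives $u^2-1=t(2+t)$, and $2+t$ is a unit. So $\Ee_{12}(tsx)$ is a commutator for all $t,x\in\m_A$ and every unit $s$. Because $\Ee_{12}$ is additive, all $\Ee_{12}(y)$ with $y\in\m_A^2$ lie in the commutator subgroup. The same argument handles $\Ee_{21}$.
\item The diagonal part is the main obstacle. Compute $[\Ee_{12}(x),\Ee_{21}(y)]$ for $x,y\in\m_A$. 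It equals $D(1+xy+\cdots)$ times elementary matrices with entries in $\m_A^2$ (in fact of higher order). By the previous item those elementary factors are already commutators, so $D(1+xy+\text{higher order})$ lies in the commutator subgroup.
\end{itemize}

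\textbf{Step 4: all of $D(1+\m_A^2)$.} Step 3 produces $D(1+z)$ for $z$ in the set of values $xy+O(\m_A^3)$, and products of such elements, up to the $\m_A^2$-unipotent corrections. The difficulty is that these values need not exhaust $\m_A^2$ exactly, and $\m_A$ need not be nilpotent, so a filtration argument does not close by itself.

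To avoid this I would use an explicit formula. For $x,y\in\m_A$, with $v=1+xy$,
\[
\Ee_{12}(x)\,\Ee_{21}(y)\,\Ee_{12}(-x v^{-1})\,\Ee_{21}(-y v)=D(v)
\]
up to arranging the signs and factors correctly. When the factors are grouped as commutators, this realizes $D(1+xy)$ modulo elements already known to be commutators. Multiplicativity $D(u)D(u')=D(uu')$ then gives every $D(u)$ with $u\in 1+\m_A^2$, since every such $u$ is a finite product of elements of the form $1+x_iy_i$. To see this, write $u=1+\sum x_iy_i$ and peel off one factor at a time, dividing by $1+x_1y_1$ and noting the quotient is again in $1+\m_A^2$ with one fewer term modulo adjustments.

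Once Step 4 is established, $\ker\phi$ lies in the commutator subgroup, and $\phi$ induces the isomorphism $\Gamma(A,\m_A)^{\ab}\simeq\mathfrak{sl}_2(\m_A/\m_A^2)$.
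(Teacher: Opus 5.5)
Your proof is correct, and it takes a different route from the paper's at the key step, namely showing $\Gamma(A,\m_A^2)\subseteq[\Gamma(A,\m_A),\Gamma(A,\m_A)]$ (the paper's Lemma \ref{gamcom}).

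\textbf{The paper's route.} It invokes Lemma \ref{E_2}, i.e.\ $\Gamma(A,\m_A^2)=\Ee_2(\m_A^2)$, which rests on Bass's $\Kk_1$ results and on the vanishing of $\SK_1(A,I)$ for $I$ in the Jacobson radical. Since $\Ee_2(\m_A^2)$ is the normal closure in $\SL_2(A)$ of elementary matrices with entries in $\m_A^2$, and the commutator subgroup of $\Gamma(A,\m_A)$ is normal in $\SL_2(A)$, the paper only has to treat $\Ee_{12}(x)$ with $x\in\m_A^2$. It does this with the same $[D(1+a),\Ee_{12}(\cdot)]$ identity you use. It then passes to $\Gamma(A/\m_A^2,\m_A/\m_A^2)$ via Corollary \ref{quotient} and Lemma \ref{h1ofcong}, and identifies that abelian group with $\mathfrak{sl}_2(\m_A/\m_A^2)$ via $\Phi$.

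\textbf{Your route.} You use the lower--diagonal--upper factorization to get an honest, non-normal generating set for $\Gamma(A,\m_A^2)$. This forces you to handle $D(u)$, $u\in 1+\m_A^2$, by hand, and your treatment works more cleanly than your hedges suggest:
\begin{itemize}
\item The identity $D(1+xy)=\Ee_{12}(x)\Ee_{21}(y)\Ee_{12}(-xv^{-1})\Ee_{21}(-yv)$ holds exactly, with no sign adjustments needed.
\item Modulo the commutator subgroup, the right side is congruent to $\Ee_{12}(x^2yv^{-1})\Ee_{21}(-xy^2)$, which lies in the commutator subgroup by your Step 3.
\item The peeling step is also exact, since $u(1+x_1y_1)^{-1}=1+\sum_{i\ge 2}x_i\bigl(y_i(1+x_1y_1)^{-1}\bigr)$.
\end{itemize}

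\textbf{What each buys.} Your argument is completely elementary and self-contained. It needs no $\Kk$-theory, and it needs no $\GE_2$ surjectivity lemma, because you check surjectivity of $\phi$ directly by explicit preimages. The paper's argument is shorter, at the cost of citing the equality of principal congruence and relative elementary subgroups over local rings.
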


With this result in hand, we can use a five-term exact sequence coming from the Lyndon-Hochschild-Serre spectral sequence to compute $\Gg_1(A, \m_A)^{\ab}$ and $\Gg_0(A, \m_A)^{\ab}$ from $\Gamma(A,\m_A)^\ab$. The crucial advantage of the representation passing through the Lie algebra is that it makes it easier to understand the coinvariants of the action of $A/\m_A$ on $\Gg(A, \m_A)$.

In Section \ref{sec3}, we apply the local results to determine the first integral homology of the congruence subgroups with entries on a ring of $S$-integers. For the following theorems in this introduction, let $A = \OO_{K, S}$ be a Dedekind domain of arithmetic type with infinitely many units, not totally imaginary, and let $\ppp$ be a nonzero prime ideal of $A$. $\kappa(\ppp)$ denotes the residue field $A/\ppp$. Note that $H_1(\SL_2(A), \z)$ is known (\cite[Theorems 3.1 and 3.2]{BBT2025-2}). The abelianizations of the congruence subgroups $\Gg(A, \ppp), \Gg_1(A, \ppp)$ and $\Gg(A, \ppp)$ are explicitly given by the following theorems.

\begin{introthm}[\ref{H1(GammaGlob1)}]\leavevmode
    \begin{itemize}
        \item [(i)] If $|\kappa(\ppp)| \geq 4$ and $\char(\kappa(\ppp)) \neq 2$, then
        \[H_1(\Gg(A, \ppp), \z) \simeq \kappa(\ppp)^3\oplus H_1(\SL_2(A), \z).
        \]
        \item [(ii)] If $|\kappa(\ppp)| =3$, then
        \[
        H_1(\Gg(A, \ppp), \z) \simeq \kappa(\ppp)^2\oplus H_1(\SL_2(A), \z).  
        \]
    \end{itemize}
\end{introthm}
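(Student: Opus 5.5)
The idea is to split $H_1(\Gg(A,\ppp),\z)$ into a congruence part, computed by the local theorem, and a part coming from $\SL_2(A)$, using the Congruence Subgroup Property. Since $|S|\geq 2$, Serre's theorem \cite{serre1970} says the congruence kernel is trivial when $K$ is not totally imaginary. So every finite-index subgroup of $\SL_2(A)$ contains some principal congruence subgroup $\Gg(A,\mathfrak{a})$.

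\textbf{Step 1: reduce to a finite quotient.} The commutator subgroup $[\Gg(A,\ppp),\Gg(A,\ppp)]$ has finite index in $\Gg(A,\ppp)$, since $\Gg(A,\ppp)^{\ab}$ is finitely generated and torsion; this torsion property is itself a consequence of the congruence subgroup property. By the above, the commutator subgroup therefore contains some $\Gg(A,\mathfrak{a})$. Next I split $\mathfrak{a}=\ppp^{\alpha}\mathfrak{b}$ with $\mathfrak{b}$ prime to $\ppp$. Strong approximation gives surjectivity of $\SL_2(A)\to\SL_2(A/\ppp^\alpha)\times\SL_2(A/\mathfrak{b})$, and hence
\[
\Gg(A,\ppp)/\Gg(A,\mathfrak{a})\simeq \Gg(A/\ppp^{\alpha},\ppp/\ppp^{\alpha})\times \SL_2(A/\mathfrak{b}).
\]
Taking abelianizations, $H_1(\Gg(A,\ppp),\z)$ becomes the abelianization of this finite group, modulo the image of $\Gg(A,\mathfrak a)$. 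One must check this image is already killed, which I would do by enlarging $\alpha$ and $\mathfrak b$.

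\textbf{Step 2: the $\ppp$-part.} Here I would apply Theorem \ref{H1(Gammaloc)} to the local ring $A/\ppp^\alpha$, whose maximal ideal $\m=\ppp/\ppp^\alpha$ satisfies $\m/\m^2\simeq\kappa(\ppp)$. When $\char\kappa(\ppp)\neq 2$ this gives $\Gg(A/\ppp^\alpha,\ppp/\ppp^\alpha)^{\ab}\simeq\Sl_2(\kappa(\ppp))\simeq\kappa(\ppp)^3$, which explains the summand in (i).

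\textbf{Step 3: the prime-to-$\ppp$ part.} I would identify $\SL_2(A/\mathfrak b)^{\ab}$, for $\mathfrak b$ chosen deep enough, with $H_1(\SL_2(A),\z)$. The cleanest way is to run the same argument for $\SL_2(A)$ itself, writing $H_1(\SL_2(A),\z)=\varprojlim \SL_2(A/\mathfrak c)^{\ab}$. For $|\kappa(\ppp)|\geq 4$, the group $\SL_2(A/\ppp^\alpha)$ is perfect: $\SL_2(\kappa(\ppp))$ is perfect, and the Lie algebra kernels are generated by commutators. So the $\ppp$-primary factors contribute nothing to $H_1(\SL_2(A))$, while the prime-to-$\ppp$ factors are exactly the $\SL_2(A/\mathfrak b)^{\ab}$. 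Combining Steps 1–3 yields (i).

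\textbf{Case (ii), $|\kappa(\ppp)|=3$.} Now $\SL_2(\F_3)$ is not perfect: its abelianization is $\z/3$. Hence $\SL_2(A/3^\alpha)^{\ab}$ contributes a $\z/3$ to $H_1(\SL_2(A),\z)$ coming from the $\ppp$-part, and the naive splitting of Step 3 double counts. I would instead compare the two computations directly. Map $\Gg(A,\ppp)^{\ab}\to\SL_2(A)^{\ab}$, and use the five-term exact sequence for $1\to\Gg(A,\ppp)\to\SL_2(A)\to\SL_2(\F_3)\to1$:
\[
H_2(\SL_2(\F_3))\to H_1(\Gg(A,\ppp))_{\SL_2(\F_3)}\to H_1(\SL_2(A))\to \z/3\to 0.
\]
The coinvariants of $\Sl_2(\F_3)$ under the adjoint action are trivial, so this sequence alone does not pin down the answer. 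The more reliable route is again the explicit finite model
\[
\Gg(A/\ppp^\alpha,\ppp/\ppp^\alpha)^{\ab}\times\SL_2(A/\mathfrak b)^{\ab}=\kappa(\ppp)^3\oplus H_1(\SL_2(A))_{\ppp'},
\]
where $H_1(\SL_2(A))_{\ppp'}$ is the part not coming from $\ppp$. Since $H_1(\SL_2(A),\z)=H_1(\SL_2(A))_{\ppp'}\oplus\z/3$, this can be rewritten as $\kappa(\ppp)^2\oplus H_1(\SL_2(A),\z)$.

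\textbf{Main obstacle.} The delicate step is Step 1: showing that $\Gg(A,\mathfrak a)$ maps into the commutator subgroup, so that no extra relations appear beyond those in the finite quotient. I would handle it by using the congruence subgroup property to pick $\mathfrak a$ with $\Gg(A,\mathfrak a)\subseteq[\Gg(A,\ppp),\Gg(A,\ppp)]$ from the outset. Showing that the $\ppp$-primary part of $H_1(\SL_2(A),\z)$ is exactly $\z/3$ when $|\kappa(\ppp)|=3$, via \cite{BBT2025-2}, is the second point needing care.
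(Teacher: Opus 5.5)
Your outline is essentially the paper's proof. You put a principal congruence subgroup inside $[\Gg(A,\ppp),\Gg(A,\ppp)]$, then split the finite quotient by the Chinese Remainder Theorem into a $\ppp^\alpha$-part and a prime-to-$\ppp$ part. The $\ppp^\alpha$-part is handled by Theorem~\ref{H1(Gammaloc)}; this needs $\alpha\geq 2$ so that $\m/\m^2\simeq\kappa(\ppp)$, which the paper forces by taking $J=6\ppp^2I$. Finally you compare with $\SL_2(A)^{\ab}$, whose $\ppp$-factor $\SL_2(A/\ppp^\alpha)^{\ab}$ is $0$ or $\kappa(\ppp)$ (Lemma~\ref{H1-SL2-A-J'}). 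Using strong approximation instead of the $\GE_2$ property for surjectivity is an inessential difference.

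The one step you do not justify is that $[\Gg(A,\ppp),\Gg(A,\ppp)]$ has finite index in $\SL_2(A)$. You need this before the congruence subgroup property can give you $\Gg(A,\mathfrak a)$ inside it; enlarging $\alpha$ and $\mathfrak b$ cannot substitute for it. Your claim that torsion of $\Gg(A,\ppp)^{\ab}$ ``is a consequence of the congruence subgroup property'' is asserted, not argued. As written it looks circular, since that property only applies to subgroups already known to have finite index.

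The paper closes this differently. The commutator subgroup is characteristic in the normal subgroup $\Gg(A,\ppp)$, hence normal in $\SL_2(A)$, and it is non-central. Serre's theorem on normal subgroups \cite[Theorem 3]{serre1970} then gives finite index, and the congruence subgroup property yields $\Gg(A,I)$ inside it.

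Your route can also be repaired, at the cost of more input. Any cyclic quotient $\z/n$ of $\Gg(A,\ppp)$ has kernel of finite index in $\SL_2(A)$, so by the congruence subgroup property it factors through $\Gg(A/I,\ppp/I)^{\ab}$ for some $I\subseteq\ppp$. By the local computations, this group has exponent dividing $12p$, where $p=\char\kappa(\ppp)$: the $\ppp$-part gives $\Sl_2(\m/\m^2)$, and the other primes give $\SL_2(A/\qqq^\gamma)^{\ab}$. Hence the finitely generated group $\Gg(A,\ppp)^{\ab}$ has no infinite cyclic quotient, so it is finite.
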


\begin{introthm}[\ref{Gg_1Glob1}]
    If $|\kappa(\ppp)| \geq 3$, then
     \[H_1(\Gg_1(A, \ppp), \z) \simeq \kappa(\ppp) \oplus H_1(\SL_2(A), \z).
        \]
\end{introthm}
\newpage
\begin{introthm}[\ref{Gg_0Glob}]\leavevmode
    \begin{itemize}
        \item [(i)] If $|\kappa(\ppp)| \geq 4$, then
        \[
        H_1(\Gg_0(A, \ppp), \z) \simeq \kappa(\ppp)^\times \oplus H_1(\SL_2(A), \mathbb{Z}).
        \]
        \item [(ii)] If $|\kappa(\ppp)|=3$, then
        \[
        H_1(\Gg_0(A, \ppp), \z)\simeq \kappa(\ppp) \oplus \kappa(\ppp)^\times \oplus H_1(\SL_2(A), \z).  
        \]
    \end{itemize}
\end{introthm}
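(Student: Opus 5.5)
The plan is to compare $\Gg_0(A,\ppp)$ with $\SL_2(A)$ through the congruence subgroup property, and to obtain the local contribution from the local results of Section \ref{sec:local}. Since $A$ has infinitely many units ($|S|\geq 2$), Serre's theorem \cite{serre1970} says that every finite index subgroup of $\SL_2(A)$ is a congruence subgroup. In particular the commutator subgroup $[\Gg_0(A,\ppp),\Gg_0(A,\ppp)]$, which has finite index because $H_1$ is finite here, contains some principal congruence subgroup $\Gg(A,\mathfrak{n})$.

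First I would reduce from $\mathfrak{n}$ to the $\ppp$-part. Write $\mathfrak{n}=\ppp^\alpha\mathfrak{q}$ with $\mathfrak{q}$ prime to $\ppp$. Strong approximation gives $\SL_2(A/\mathfrak{n})\simeq \SL_2(A/\ppp^\alpha)\times\SL_2(A/\mathfrak{q})$, and the image of $\Gg_0(A,\ppp)$ in it is $\Gg_0(A/\ppp^\alpha,\ppp/\ppp^\alpha)\times \SL_2(A/\mathfrak{q})$. The abelianization of $\Gg_0(A,\ppp)$ then embeds into a pushout of $\Gg_0(A,\ppp)/\Gg(A,\ppp^\alpha)$ and the $\mathfrak{q}$-part. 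For the latter I would argue, exactly as for $\SL_2(A)$, that it contributes precisely $H_1(\SL_2(A),\z)$, since the abelianization of $\SL_2(A)$ is also detected away from $\ppp$. This step is cleanest if the paper has already established this splitting in the proofs of Theorems \ref{H1(GammaGlob1)} and \ref{Gg_1Glob1}, and I would reuse that argument verbatim.

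Next comes the local computation. Applying Theorem \ref{H1(Gammaloc)} and the local result for $\Gg_0$ to $B=A/\ppp^\alpha$, a local ring with residue field $\kappa(\ppp)$, we use the five-term sequence for
\[1\to\Gg(B,\m_B)\to\Gg_0(B,\m_B)\to B(\kappa)\to1,\]
where $B(\kappa)$ is the Borel subgroup of $\SL_2(\kappa(\ppp))$, whose abelianization is $\kappa(\ppp)^\times$ once $|\kappa(\ppp)|\geq4$. The coinvariants $\mathfrak{sl}_2(\m_B/\m_B^2)_{B(\kappa)}$ are computed through the decomposition $\mathfrak{n}^-\oplus\mathfrak{h}\oplus\mathfrak{n}^+$.
\begin{itemize}
\item The torus acts on $\mathfrak{n}^\pm$ by $t^{\pm2}$. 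When $|\kappa|\geq4$ some $t$ has $t^2\neq1$, so these coinvariants vanish; when $|\kappa|=3$ every $t^2=1$ and they survive.
\item The unipotent part moves $\mathfrak{h}$ into $\mathfrak{n}^+$ and $\mathfrak{n}^-$ into $\mathfrak{h}$, which kills the remaining pieces.
\end{itemize}
The expected outcome is that the local abelianization is $\kappa(\ppp)^\times$ for $|\kappa|\geq4$, and gains one extra copy of $\kappa(\ppp)$ (the $\mathfrak{n}^-$ or $\mathfrak{n}^+$ class modulo $\ppp^2$) when $|\kappa|=3$. For $|\kappa|=3$ the Borel abelianization must also be checked directly, since there the unipotent radical is not contained in the commutator subgroup.

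Finally, I would show that the sequence
\[0\to \text{local part}\to H_1(\Gg_0(A,\ppp),\z)\to H_1(\SL_2(A),\z)\to 0\]
is exact and split. The splitting should come from the fact that, for $\ppp$-coprime reasons, the maps to the local quotient and to $H_1(\SL_2(A),\z)$ jointly give an isomorphism onto the direct product. I expect the main obstacle to be this gluing step: showing that the commutator subgroup of $\Gg_0(A,\ppp)$ is exactly the preimage of the product of the local commutator and the $\mathfrak{q}$-part commutator. In particular one must verify that elements of $\Gg(A,\ppp^\alpha)$ supported away from $\ppp$ are already commutators in $\Gg_0$. I would first attempt this using elementary matrices $E_{12}(x)$, $E_{21}(y)$ with $y\in\ppp$, together with conjugation by diagonal units from $A^\times$, which is where the hypotheses of infinitely many units and not totally imaginary enter.
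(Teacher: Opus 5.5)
\textbf{There is a genuine gap in case (ii).} Your overall route matches the paper's: the congruence subgroup property, the splitting of Theorem~\ref{DDab}, and a local five-term computation. However, in case (ii) the bookkeeping is wrong in two places, and the two errors happen to cancel.

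First, you claim that the prime-to-$\ppp$ factor $\SL_2(A/\mathfrak{q})^{\ab}$ ``contributes precisely $H_1(\SL_2(A),\z)$''. This fails when $|\kappa(\ppp)|=3$. Since $\Gg(A,\ppp^\alpha\mathfrak{q})\subseteq[\Gg_0(A,\ppp),\Gg_0(A,\ppp)]\subseteq[\SL_2(A),\SL_2(A)]$, the same reduction applied to $\SL_2(A)$ gives $H_1(\SL_2(A),\z)\simeq\SL_2(A/\ppp^\alpha)^{\ab}\oplus\SL_2(A/\mathfrak{q})^{\ab}$. Here $\SL_2(A/\ppp^\alpha)^{\ab}$ is trivial for $|\kappa(\ppp)|\geq 4$ but isomorphic to $\kappa(\ppp)$ for $|\kappa(\ppp)|=3$ (Lemma~\ref{H1-SL2-A-J'}). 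For example, with $A=\z[1/2]$ and $\ppp=3A$, all of $H_1(\SL_2(A),\z)\simeq\z/3$ sits at $\ppp$, and $\SL_2(A/\mathfrak{q})^{\ab}=0$.

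Second, the local abelianization for $|\kappa(\ppp)|=3$ is not $\kappa(\ppp)\oplus\kappa(\ppp)^\times$. The coinvariants contribute $\mathfrak{n}^-\simeq\m_B/\m_B^2\simeq\kappa(\ppp)$ (with $\alpha\geq 2$). In addition, $B(\F_3)\simeq\z/6$ is abelian and contributes $\kappa(\ppp)\oplus\kappa(\ppp)^\times$. So $\Gg_0(B,\m_B)^{\ab}\simeq\kappa(\ppp)^2\oplus\kappa(\ppp)^\times$ (Propositions~\ref{H1GAM0loc}(ii) and~\ref{m-over-m2}). For $B=\z/9$ you can see the extra factor directly: $\left(\begin{smallmatrix}a&b\\c&d\end{smallmatrix}\right)\mapsto (c/3)\,d \bmod 3$ is a homomorphism on $\Gg_0(\z/9,3\z/9)$. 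It is nontrivial on $E_{21}(3)$ and independent of the reduction to $B(\F_3)$.

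The correct assembly is $H_1(\Gg_0(A,\ppp),\z)\simeq\kappa(\ppp)^2\oplus\kappa(\ppp)^\times\oplus\SL_2(A/\mathfrak{q})^{\ab}$. One copy of $\kappa(\ppp)$ is then absorbed into $H_1(\SL_2(A),\z)\simeq\kappa(\ppp)\oplus\SL_2(A/\mathfrak{q})^{\ab}$. Your argument reaches the formula in (ii) only through the compensating errors.

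\textbf{Further points.}
\begin{itemize}
\item Your justification that $[\Gg_0(A,\ppp),\Gg_0(A,\ppp)]$ has finite index (``because $H_1$ is finite here'') is circular, since finiteness is part of what is being proved. Instead, as in the paper, note that $[\Gg(A,\ppp),\Gg(A,\ppp)]$ is a noncentral normal subgroup of $\SL_2(A)$. Serre's theorem together with $\SK_1(A,I)=1$ then puts some $\Gg(A,I)$ inside it, hence inside $[\Gg_0(A,\ppp),\Gg_0(A,\ppp)]$.
\item Once this holds, the ``gluing obstacle'' you anticipate is not there. By Lemma~\ref{h1ofcong}, $\Gg_0(A,\ppp)^{\ab}$ is simply the abelianization of $\Gg_0(A/\ppp^\alpha,\ppp/\ppp^\alpha)\times\SL_2(A/\mathfrak{q})$. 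No extra commutator identities with units of $A$ and no split sequence onto $H_1(\SL_2(A),\z)$ are required, only the abstract comparison above.
\item Passing through $\mathfrak{sl}_2(\m_B/\m_B^2)$ (Theorem~\ref{H1(Gammaloc)}) needs $\char(\kappa(\ppp))\neq 2$, while (i) includes fields such as $\F_4$. For $|\kappa(\ppp)|\geq 4$, the torus commutators of Lemma~\ref{Gg_0abcond} give $\Gg(B,\m_B)\subseteq[\Gg_0(B,\m_B),\Gg_0(B,\m_B)]$ directly. Hence $\Gg_0(B,\m_B)^{\ab}\simeq B(\kappa(\ppp))^{\ab}\simeq\kappa(\ppp)^\times$ in any characteristic.
\end{itemize}
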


Our findings also provide cohomological data: the torsion subgroup of the second integral cohomology is isomorphic to the torsion of the first homology. 
\begin{introprp}[\ref{torsion-prp}]
    Assume $|S| \geq 2$ if $K$ is a number field and $|S| \geq 3$ if $K$ is a function field. Let $\hat{\Gg}(A, \ppp)$ denote any of the three groups $\Gg(A, \ppp)$, $\Gg_1(A, \ppp)$ or $\Gg_0(A, \ppp)$. Then,
    \[H^{2}(\hat{\Gg}(A, \ppp), \z)_{\operatorname{tor}} \simeq H_1(\hat{\Gg}(A, \ppp), \z)_{\operatorname{tor}}. \]
\end{introprp}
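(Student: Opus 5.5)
The plan is to use the Universal Coefficient Theorem for cohomology, which for any group $G$ gives a natural (split) short exact sequence
\[
0 \arr \operatorname{Ext}^1_{\z}(H_1(G,\z),\z) \arr H^2(G,\z) \arr \operatorname{Hom}(H_2(G,\z),\z) \arr 0 .
\]
Since $\operatorname{Hom}(H_2(G,\z),\z)$ is torsion-free, the torsion subgroup of $H^2(G,\z)$ coincides with the torsion of $\operatorname{Ext}^1_{\z}(H_1(G,\z),\z)$. This reduction is valid provided $H_1(G,\z)$ is finitely generated, or at least of a form where Ext can be computed explicitly. So the whole argument reduces to understanding the structure of $H_1(\hat{\Gg}(A,\ppp),\z)$ as an abelian group.

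The first step is to record that $H_1(\hat{\Gg}(A,\ppp),\z)$ is a finitely generated abelian group. By Theorems \ref{H1(GammaGlob1)}, \ref{Gg_1Glob1} and \ref{Gg_0Glob}, it is a direct sum of finitely many copies of the finite groups $\kappa(\ppp)$ and $\kappa(\ppp)^\times$ together with $H_1(\SL_2(A),\z)$. For the latter, I would invoke \cite[Theorems 3.1 and 3.2]{BBT2025-2}, which express it via $\aa/(\aa)^2$ (or a quotient) and finite pieces. By Dirichlet's unit theorem, or its function field analogue, $\aa$ is finitely generated, so $H_1(\SL_2(A),\z)$ is finite; in fact it is finitely generated under the hypotheses on $|S|$. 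Alternatively, one can bypass the explicit formulas: $\hat{\Gg}(A,\ppp)$ has finite index in $\SL_2(A)$, which is finitely generated (indeed of type $FP_\infty$ for $S$-arithmetic groups), so its abelianization is finitely generated. The hypotheses on $|S|$ (that is, $|S|\ge 2$ for number fields and $|S|\ge 3$ for function fields) are exactly where this finiteness is guaranteed. In the function field case, finite generation of $S$-arithmetic subgroups of $\SL_2$ requires more places, and this is the point I expect to need care in checking.

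The second step is the computation. Write $H_1(\hat{\Gg}(A,\ppp),\z)\simeq \z^r\oplus T$ with $T$ finite. Then $\operatorname{Ext}^1_{\z}(\z^r\oplus T,\z)\simeq \operatorname{Ext}^1_{\z}(T,\z)\simeq \operatorname{Hom}(T,\mathbb{Q}/\z)\simeq T$, where the last isomorphism is (non-canonical) Pontryagin self-duality of finite abelian groups. Combining this with the first paragraph yields
\[
H^2(\hat{\Gg}(A,\ppp),\z)_{\operatorname{tor}}\simeq T = H_1(\hat{\Gg}(A,\ppp),\z)_{\operatorname{tor}}.
\]

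The main obstacle is not the homological algebra but justifying finite generation of $H_1$ in all cases, especially ensuring $H_1(\SL_2(A),\z)$ is finitely generated under the stated bounds on $|S|$. I would handle this by citing the finite generation of $S$-arithmetic groups in the relevant range, and passing to the finite-index subgroup.
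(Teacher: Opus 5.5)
Your proof is correct, but it takes a different and more economical route than the paper.

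The paper deduces the statement from a cited lemma of Mirzaii--P\'erez (Lemma \ref{coh-torsion}). That lemma requires $H_1$, $H_2$ and $H^2$ all to be finitely generated. The paper secures this by showing that $\hat{\Gg}(A,\ppp)$ is of type $FP_\infty$ in the number field case (Selberg's lemma plus Borel--Serre), and of type $FP_{|S|-1}$ in the function field case (Stuhler). The latter is exactly where the bound $|S|\geq 3$ for function fields comes from.

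Your universal coefficient argument is essentially the content of that lemma made explicit. It also shows that only finite generation of $H_1(\hat{\Gg}(A,\ppp),\z)$ is needed. The torsion of $H^2(G,\z)$ always equals the torsion of $\operatorname{Ext}^1_{\z}(H_1(G,\z),\z)$, because $\operatorname{Hom}(H_2(G,\z),\z)$ is torsion-free; nothing about $H_2$ or $H^2$ enters. Finite-index subgroups of finitely generated groups are finitely generated, and so are their abelianizations. Hence you only need $\SL_2(A)$ to be finitely generated, i.e. of type $FP_1$. For function fields this already holds when $|S|\geq 2$, so your route works under weaker hypotheses. The paper's route additionally yields $\operatorname{rank} H_2 = \operatorname{rank} H^2$, which is not needed for this statement.

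Two corrections are needed.
\begin{enumerate}
\item Do not rely on the explicit formulas of Theorems \ref{H1(GammaGlob1)}, \ref{Gg_1Glob1} and \ref{Gg_0Glob}. They exclude $|\kappa(\ppp)|=2$, and also $\char(\kappa(\ppp))=2$ for $\Gg(A,\ppp)$, whereas the proposition makes no such restriction. The finite-index argument is therefore the one that must carry the proof.
\item The parenthetical claim that $\SL_2(A)$ is of type $FP_\infty$ is false when $K$ is a function field. Stuhler shows $\SL_2(\OO_{K,S})$ is of type $FP_{|S|-1}$ but not $FP_{|S|}$. Fortunately finite generation is all your argument uses. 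Likewise, the bound $|S|\geq 3$ is not ``exactly where'' the finiteness you need is guaranteed; it is what the paper's route needs.
\end{enumerate}
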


%Let $\hat{\Gg}(A, \ppp)$ denote any of the groups $\Gg(A, \ppp)$, $\Gg_1(A, \ppp)$ or $\Gg_0(A, \ppp)$.
%\begin{introthm}[\ref{torsion-thm}]
%    Assume $|S| \geq 2$ if $K$ is a number field and $|S| \geq 3$ if $K$ is a function field. Then
%    \[H^{2}(\hat{\Gg}(A, \ppp), \z)_{\operatorname{tor}} \simeq H_1(\hat{\Gg}(A, \ppp), \z).\]
%\end{introthm}

\vspace{0.5cm}

\noindent\textbf{Acknowledgment.} We sincerely thank Behrooz Mirzaii for perusing the manuscript, and for his helpful comments and discussions about this article. The contributions of the first, second and fourth authors to this work, in the order they appear in the title, were made possible by CAPES (Coordena\c{c}\~ao de Aperfeiçoamento de Pessoal de N\'ivel Superior) fellowships 
(grant numbers 88887.136061/2025-00, 88887.995616/2024-00 and 88887.955905/2024-00). The third author is supported by a research grant (VIL54509) from VILLUM FONDEN.

\section{\texorpdfstring{$\SL_2$}{Lg} over Dedekind domains of arithmetic type} \label{sec1}

We start by recalling the definition of Dedekind domains of arithmetic type and known results about their structure. Then, we present properties relating subgroups generated by elementary matrices to congruence subgroups in $\SL_2$. We end the section by connecting this to the study of $\operatorname{GE}_2$-rings. For a more complete background on Dedekind domains of arithmetic type, see \cite[\S 1]{BBT2025-2}.

\subsection{Dedekind domains of arithmetic type}  

A global field $K$ is either a finite number field or a function field with one variable with coefficients on a finite field. If $\char (K) = 0$ (resp. $\char (K) > 0$), then the ring of algebraic integers of $K$, denoted by $\OO_K$, is the subring of all elements of $K$ that are integral over $\z$ (resp. over $\F_q[t]$). It is well established in the literature that $\OO_K$ is a Dedekind domain \cite[Theorem 6.24]{keune2023}. For $\ppp \in \mathrm{Spec}(\mathcal{O}_{K})$, we can define a discrete valuation $v_{\ppp}: K^{\times} \rightarrow \mathbb{Z}$ on $K$, called the \textit{$\ppp$-adic valuation} (see \cite[Definition 2.37]{keune2023}). These $\ppp$-adic valuations define norms on $K$, the so-called \textit{non-archimedean norms}, given by:
\[
|x|_{\ppp}:=\frac{1}{N(\ppp)^{v_{\ppp}(x)}}.
\]
On the other hand, every real embedding $K \hookrightarrow \mathbb{C}$ gives a norm on $K$, as does every pair of complex embeddings. These norms are called \textit{infinite places} or \textit{infinite primes}. We denote by $S_\infty$ the set of all infinite primes.

\begin{dfn}
Let $K$ be a global field and $S$ a finite nonempty set of primes containing $S_\infty$. We define:
$$
\mathcal{O}_{K,S}=\{a \in K^{\times} \text{ }| \text{ }v_{\ppp}(a)\geq 0 \text{ for all } 
\ppp \notin S \}. 
$$
A ring $A$ is called a \textit{Dedekind domain of arithmetic type}, or a \textit{ring of $S$-integers}, whenever $A=\OO_{K,S}$. 
If $S = S_{\infty}$ and every prime in $S$ comes from an imaginary embedding $K \hookrightarrow \mathbb{C}$, we say that $A$ is \textit{totally imaginary}.
\end{dfn}

By the classic Dirichlet Unit Theorem, the rank of the group of units in a Dedekind domain of arithmetic type is given in terms of $|S|$.

\begin{thm}[$S$-unit Theorem, Dirichlet, Hasse, Chevalley]\label{dir}
Let $A=\OO_{K,S}$ be a Dedekind domain of arithmetic type. Denote by $\aa$ the group of units of 
$A$ and let $\mu(A)$ be the group of roots of unity in $A$. Then
\[
\aa \simeq \mu(A) \oplus \z^{|S|-1},
\]
In particular, $\aa$ is always a finitely generated abelian group. Moreover, $A$ has infinitely 
many units if and only if $|S|\geq 2$.
\end{thm}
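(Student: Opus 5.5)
The proof reduces Theorem~\ref{dir} to the structure of $K$ as a global field. We work with the $S$-unit map
\[
\lambda: \aa \longrightarrow \mathbb{R}^{S}, \qquad \lambda(u) = \bigl(\log |u|_v\bigr)_{v\in S},
\]
where the valuations are suitably normalized, and proceed in three steps.

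\emph{Step 1: the kernel and the hyperplane.} For $u\in\aa$ we have $v_\ppp(u)=0$ for all $\ppp\notin S$, since both $u$ and $u^{-1}$ lie in $A$. The product formula $\prod_v |u|_v=1$ then shows that $\lambda$ lands in the hyperplane $H=\{x\in\mathbb{R}^S:\sum_v x_v=0\}$, which has dimension $|S|-1$.

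The kernel consists of units with $|u|_v=1$ for every place $v$. The set of elements of bounded height in $A$ is finite, so this kernel is a finite subgroup of $K^\times$. Hence it consists of roots of unity and equals $\mu(A)$.

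\emph{Step 2: $\lambda(\aa)$ is a full lattice in $H$.} Discreteness of the image follows by the same finiteness argument: only finitely many elements of $A$ satisfy $|u|_v\le C$ for all $v\in S$.

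The substantive claim is that $\lambda(\aa)$ spans $H$. This is the main obstacle, and I would argue it via Minkowski's geometry of numbers, using the adelic version to cover function fields. For each $v_0\in S$ I would construct a unit $u$ with $|u|_v<1$ for all $v\in S\setminus\{v_0\}$, as follows.
\begin{itemize}
\item Minkowski's theorem produces infinitely many $a\in A$ with $|a|_v$ small at the places $v\ne v_0$ and a bounded norm $N(aA)$.
\item The finiteness of ideals of bounded norm forces two such elements to generate the same ideal.
\item Their quotient is the desired unit.
\end{itemize}
The resulting $|S|-1$ vectors, with all off-diagonal entries negative and row sums zero, are linearly independent by the standard diagonal-dominance lemma.

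\emph{Step 3: splitting.} By Step 2, $\aa/\mu(A)\simeq\lambda(\aa)\simeq\z^{|S|-1}$ is free. The exact sequence $1\to\mu(A)\to\aa\to\z^{|S|-1}\to 0$ therefore splits, giving $\aa\simeq\mu(A)\oplus\z^{|S|-1}$. In particular $\aa$ is finitely generated.

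Finally, $\mu(A)$ is finite. Hence $\aa$ is infinite if and only if $|S|-1\ge 1$, that is, if and only if $|S|\ge 2$.
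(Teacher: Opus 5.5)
Your proposal is correct. It is the classical proof of the $S$-unit theorem: the logarithmic map with the product formula, finiteness of elements of bounded height (giving $\ker\lambda=\mu(A)$ and discreteness of the image), an adelic Minkowski argument producing units small at all places of $S$ but one, and the diagonal-dominance lemma. That lemma is applied after deleting one coordinate, so the row sums become positive rather than zero. The paper gives no argument of its own and simply cites Weiss (general global fields) and Keune (number fields), whose proofs follow this classical route, so your sketch essentially unpacks the cited references.
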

\begin{proof}
See \cite[Theorem 5.3.10]{weiss1976} for the general case, and \cite[Theorem 6.31]{keune2023} 
for the number field case.
\end{proof}

If $\ppp$ is a nonzero prime ideal of $A$, we denote the residue field modulo $\ppp$, $A/\ppp$, by $\kappa(\ppp)$. It is well known that, for each $\alpha \geq 1$, the ring $A/\ppp^{\alpha}$ is a finite local principal ideal ring, with maximal ideal $\ppp/\ppp^{\alpha}$ (see \cite[Exercise 5, Chap. III]{neukirch} and \cite[\S 1]{BBT2025-2}).

\subsection{Congruence subgroups and elementary matrices}\label{subsec:cong}
Let $A$ be a commutative ring. The special linear group of order $n$ over $A$, denoted by $\SL_n(A)$, is the multiplicative group of $n \times n$ matrices over $A$ with determinant equal to $1$. Given an ideal $I$ of $A$, the natural projection $\pi: A \rightarrow A/I$ induces a homomorphism 
\[ \pi^{(n)}_{*}: \SL_n(A) \rightarrow \SL_n(A/I).\]

The kernel of this map, denoted by $\Gamma^{(n)}(A, I)$, or $\SL_n(A, I)$, is called the \textit{principal congruence subgroup} of level $I$ in $\SL_n(A)$; it comprises all $n \times n$ matrices congruent to the identity modulo $I$, with determinant $1$. A \textit{congruence subgroup} of $\SL_n(A)$ is any subgroup that contains $\Gamma^{(n)}(A, I)$, for some nonzero ideal $I$ of $A$. It is clear that principal congruence subgroups are normal in $\SL_n(A)$ and that congruence subgroups are of finite index when $A = \mathcal{O}_{K, S}$, since $A/I$ is finite for $I \neq (0)$.

\textit{Elementary matrices} in $\SL_n(A)$ are denoted by $\Ee_{ij}(a)$ (with $i \neq j$) and differ from the identity only by the $(i,j)$-entry, where it has the element $a \in A$. The normal subgroup generated by all elementary matrices is denoted by $\Ee_n(A)$. We can also define elementary subgroups relative to an ideal: $\Ee_n(I)$ is the normal subgroup of $\SL_n(A)$ generated by the $\Ee_{ij}(a)$, with $a \in I$.

\begin{dfn}
    Let $A$ be a commutative ring and $I$ an ideal of $A$. We define the group $\SK_1(A, I)$ as:
    \[ \SK_1(A, I) := \frac{\SL(A, I)}{\Ee(I)},\]
    where $\SL(A, I):=\operatorname{colim}\Gamma^{(n)}(A, I)$ and $\Ee(I):=\operatorname{colim}{\Ee}_n(I)$. Here, $\operatorname{colim}$ can be thought as taking infinite union over $n$.
\end{dfn}

The following result describes $\SK_1(A, I)$ for the case where $A$ is a Dedekind domain of arithmetic type. For $x \in \mathbb{R}$, let $[x]$ be the greatest integer less than or equal to $x$. We define 
\[
[x]_{[0, n]} \coloneqq \inf(\sup(0, [x]), n),
\]
which is the integer nearest to $[x]$ within the interval $[0, n]$.

\begin{thm}[Bass-Milnor-Serre]\label{bass-milnor-serre}
    Let $A$ be a Dedekind domain of arithmetic type. 
    \begin{itemize}
        \item [(i)] If $A$ is not totally imaginary, then 
        \[\SK_1(A, I) = 1,\] for any ideal $I$ of $A$.

         \item [(ii)] If $A$ is totally imaginary and $m$ is the number of roots of unity in $A$, then
         \[\SK_1(A, I) = \mu_r,\]
         where $\mu_r$ is the group of $r$-th roots of unity in $A$, and $r$ is determined in terms of $I$ by the formula
         \[v_p(r) = \min_{\mathfrak{p} \mid p}\Bigg[  \frac{v_{\mathfrak{p}}(I)}{v_\mathfrak{p}(p)} - \frac{1}{p - 1} \Bigg]_{[0, v_p(m)]}.\]
        
    \end{itemize}
\end{thm}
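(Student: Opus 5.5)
The statement is the classical theorem of Bass, Milnor and Serre on the congruence subgroup problem, and in the paper it will simply be quoted. Still, the proof I would reconstruct has three stages: stabilization, presentation by Mennicke symbols, and an arithmetic evaluation of the universal Mennicke symbol.

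\emph{Stabilization.} First I reduce the colimit defining $\SK_1(A,I)$ to a single finite rank. $A$ is noetherian of Krull dimension $1$, so it satisfies Bass's stable range condition $\mathrm{sr}(A)\le 2$. Bass's relative stability theorems then give that
\[
\Gamma^{(n)}(A,I)/\Ee_n(I)\longrightarrow \SK_1(A,I)
\]
is surjective for $n\ge 2$ and bijective for $n\ge 3$. So it suffices to work in $\SL_3(A,I)$, and in fact to study first rows $(a,b)$ of matrices in $\Gamma^{(2)}(A,I)$.

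\emph{Mennicke symbols.} Let $W_I$ be the set of pairs $(a,b)$ with $aA+bA=A$, $a\equiv 1$ and $b\equiv 0 \pmod I$. Following Bass--Milnor--Serre, the map sending a matrix with first row $(a,b)$ to the symbol $\left[\begin{smallmatrix} b\\ a\end{smallmatrix}\right]$ identifies $\SK_1(A,I)$ with the universal group $C_I$ generated by such symbols. The relations are:
\begin{itemize}
\item invariance under $b\mapsto b+ta$ with $t\in I$, and under $a\mapsto a+tb$ with $t\in A$;
\item multiplicativity in each variable.
\end{itemize}
Proving this identification is a direct but lengthy computation with elementary operations in $\SL_3$, using Whitehead-type lemmas for $\Ee_3(I)$.

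\emph{Arithmetic evaluation.} Here I use Dirichlet/Chebotarev density for primes in ray classes modulo $I$. Within a given symbol class I move $a$ to a prime element generating a prime of degree $1$, and I show that the symbol depends only on $b$ modulo $a$ through the $m$-th power residue character. This yields a surjection
\[
\mu_m\to C_I,\qquad \zeta\mapsto \left(\tfrac{b}{a}\right)_m .
\]

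\emph{Consequences of the evaluation.}
\begin{itemize}
\item \emph{Not totally imaginary.} If $A$ has a real place, or has infinitely many units, then power reciprocity at a real place, combined with the freedom to multiply by units congruent to $1$ mod $I$, kills every symbol. This gives (i).
\item \emph{Totally imaginary.} Here I use Moore's reciprocity uniqueness theorem for the norm residue symbols $(\cdot,\cdot)_{\mathfrak p}$ at the places $\mathfrak p\mid m$. It identifies the kernel of $\mu_m\to C_I$ as $\mu_m^{\,r'}$ for an explicit $r'$, so that $C_I\simeq\mu_r$.
\end{itemize}

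The step I expect to be the main obstacle is computing $r$ in (ii). This needs the local calculation that a unit $u\equiv 1 \pmod{\mathfrak p^{v_{\mathfrak p}(I)}}$ is an $r$-th power in $K_{\mathfrak p}$ exactly in the range dictated by
\[
\frac{v_{\mathfrak p}(I)}{v_{\mathfrak p}(p)}-\frac{1}{p-1},
\]
which comes from the convergence radius of $\log$ and $\exp$ on $1+\mathfrak p^{k}$. To handle it, I would first treat $I=\mathfrak p^k$ locally using the filtration on principal units. I would then take the minimum over $\mathfrak p\mid p$ and truncate to $[0,v_p(m)]$, which is exactly the stated formula.
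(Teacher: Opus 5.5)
The paper gives no argument here: it simply quotes Bass--Milnor--Serre. Your skeleton (stable range $\le 2$, universal Mennicke symbol, Dirichlet density with power residue symbols, reciprocity) is indeed their route, but two steps fail as you state them.

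First, the case split in (i) is wrong. ``Not totally imaginary'' means that $K$ is a function field, or $K$ has a real place, or $S$ contains a finite place. It has nothing to do with $A^\times$ being infinite. For example, $A=\z[\zeta_5]$ has unit rank $1$ but is totally imaginary, and (ii) gives $\SK_1(A,I)\simeq\mu_{10}$ for $I=4(1-\zeta_5)^5$. So your claimed mechanism would ``prove'' a false statement there. Units cannot kill symbols in any case. If $\varepsilon\in A^\times$ with $\varepsilon\equiv 1 \pmod I$, the Mennicke relations give $\left[\begin{smallmatrix} b\\ \varepsilon\end{smallmatrix}\right]=\left[\begin{smallmatrix} 0\\ \varepsilon\end{smallmatrix}\right]=1$, so replacing $a$ by $\varepsilon a$ changes nothing. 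The actual mechanism is a non-complex place $v\in S$. Its Hilbert symbol $(a,b)_v$ enters the product formula, but it is not constrained by the congruences defining $W_I$, and this is what forces $\mu_m\to C_I$ to be trivial. Your sketch covers only real places; it does not treat a finite place in $S$ or function fields.

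Second, the local threshold in (ii) is off by one. Write $e=v_{\mathfrak p}(p)$, $k=v_{\mathfrak p}(I)$ and $s=v_p(r)$. The $\log/\exp$ argument you invoke shows that $1+\mathfrak p^k\subseteq (K_{\mathfrak p}^\times)^{p^s}$ iff $k>e(s+\frac1{p-1})$. The formula, however, only requires $k\ge e(s+\frac1{p-1})$. Since $p\mid m$ forces $\mu_p\subset K_{\mathfrak p}$, hence $(p-1)\mid e$, this boundary case really occurs.

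What governs invariance of $\left(\frac{b}{a}\right)_r$ under $a\mapsto a+tb$ is, after applying the product formula, the local factor $(1+cb,b)_{\mathfrak p}=(1+cb,-c)_{\mathfrak p}^{-1}$, with $c=t/a$ and $b\in\mathfrak p^k$. There are two cases.

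If $c\in\mathfrak p$, then $1+cb\in 1+\mathfrak p^{k+1}$ is an $r$-th power, since $k+1>e(s+\frac1{p-1})$.

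If $c$ is a unit, one only needs units to pair trivially with $1+\mathfrak p^k$. Equivalently, the Kummer extensions generated by $p^s$-th roots of elements of $1+\mathfrak p^k$ must be unramified. This holds exactly when $k\ge e(s+\frac1{p-1})$.

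Concretely, take $A=\z[\zeta_3]$, $\mathfrak p=(1-\zeta_3)$ and $I=\mathfrak p^3$. The formula gives $r=3$, yet $1+\mathfrak p^3$ is not contained in the cubes of $\mathbb{Q}_3(\zeta_3)$ (only $1+\mathfrak p^4$ is), so your criterion would output $r=1$. The quantity to compute is therefore the unit--unit Hilbert pairing, i.e.\ ramification of Kummer extensions, not local $r$-th powers.
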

\begin{proof}
    See \cite[Corollary 4.3]{bms1967}.
\end{proof}

Throughout this paper, whenever we omit the index of $\Gamma^{(n)}(A, I)$, it is understood that $n =2$, which is the case we will be mostly dealing with. %For matrices of order $2$, we can also define these groups relative to two nonzero ideals $I_1$ and $I_2$ of $A$, following \cite{vas1972}. Let

%\[
%\widetilde{\Gamma}(I_1, I_2) :=\bigg\{ {\mtxx a b c d} \in 
%\SL_2(A) : b \in I_1, c \in I_2, a-1, d-1 \in I_1I_2\bigg\}.
%\]
% Similarly, $\Ee_2(I_1, I_2)$ is the subgroup generated by matrices of the form 
%\[
%E_{12}(a):=\begin{pmatrix}
%1 & a\\
%0 & 1
%\end{pmatrix} \ \ \ \text{ and } \ \ \ 
%E_{21}(b):=\begin{pmatrix}
%1 & 0\\
%b & 1
%\end{pmatrix},
%\]
% with $a \in I_1$ and $b \in I_2$. It is clear that
%\[
%\Ee_2(I_1,I_2) \se \widetilde{\Gamma}(I_1, I_2).
%\]

%\begin{thm}[Vaserstein, Liehl]\label{VL}
%Let $A = \mathcal{O}_{K,S}$ be a Dedekind domain of arithmetic type with $|S| \geq 2$. Then $\Ee_2(I_1,I_2)$ is normal in $\widetilde{\Gamma}(I_1, I_2)$ and
%\[
%\frac{\widetilde{\Gamma}(I_1, I_2)}{\Ee_2(I_1,I_2)} = \SK_1(A, I_1I_2).
%\]
%Moreover, $\widetilde{\Gamma}(I_1, I_2)$ has finite index in $\SL_2(A)$.
%\end{thm}
%\begin{proof}
%See \cite[page 321, Theorem 1]{vas1972} and \cite[\S4]{liehl1981}. 
%\end{proof}

\begin{thm}[Serre]\label{gamma-over-EI}
    Let $A = \mathcal{O}_{K,S}$ be a Dedekind domain of arithmetic type with $|S| \geq 2$. Then
    \[ \frac{\Gamma(A, I)}{\Ee_2(I)} \simeq \SK_1(A, I).\]
\end{thm}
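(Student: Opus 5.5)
The plan is to compare the rank-two situation with the stable one through the natural map
\[
\Phi\colon \Gamma(A,I)\longrightarrow \SL(A,I)\longrightarrow \SK_1(A,I),
\]
and to show two things: $\Phi$ is surjective, and $\ker\Phi=\Ee_2(I)$. The inclusion $\Ee_2(I)\subseteq\ker\Phi$ is immediate, since $\Ee_2(I)\subseteq\Ee(I)$.

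\textbf{Surjectivity.} I will use surjective stability for Dedekind domains: $A$ has stable rank $\le 2$, so for every $n\ge 3$ we have $\Gamma^{(n)}(A,I)=\Gamma^{(n-1)}(A,I)\,\Ee_n(I)$. I would prove this by the usual column reduction. Take a unimodular last column congruent to $e_n$ modulo $I$. Using elementary matrices with entries in $I$, shorten it until it becomes $e_n$. Then clear the last row, again with elementary matrices in $I$. Iterating from $n$ down to $3$ shows that every class in $\SK_1(A,I)$ is represented by a matrix in $\Gamma(A,I)$. This part is standard and I expect no difficulty.

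\textbf{Injectivity.} This is the heart of the matter, and I would route it through Mennicke symbols.

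First, I would show that $\Ee_2(I)$ is normal in $\Gamma(A,I)$. I would then show that the class of $\begin{pmatrix} a&b\\ c&d\end{pmatrix}$ in $\Gamma(A,I)/\Ee_2(I)$ depends only on the first row $(a,b)$. Call this class $[b,a]_I$. To prove this, multiply by suitable $\Ee_{12}(x)$ and $\Ee_{21}(y)$ with $x,y\in I$. Here I would use that $I$ is a product of primes and that we may adjust $a$ modulo $bI$.

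Next, I would verify that $(a,b)\mapsto[b,a]_I$ satisfies the Mennicke symbol axioms of Bass--Milnor--Serre. These are:
\begin{itemize}
\item invariance under $b\mapsto b+ta$ and $a\mapsto a+tb$ for $t\in I$ (resp.\ $t\in A$ where allowed);
\item multiplicativity $[b,a_1a_2]_I=[b,a_1]_I[b,a_2]_I$.
\end{itemize}
The invariance relations are formal. Multiplicativity is the genuinely rank-two input. In higher rank it comes for free from Whitehead-type commutator identities, but in $\SL_2$ it fails in general.

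This is where $|S|\ge 2$ enters. I expect this to be the main obstacle. Following Serre, I would use an $S$-unit $u$ of infinite order. Dirichlet's theorem (Theorem~\ref{dir}) provides one. Conjugating elementary matrices by $\operatorname{diag}(u,u^{-1})$ gives relations of the form $\Ee_{12}(x)\sim\Ee_{12}(u^2x)$. Combined with Dirichlet/Chebotarev-type density arguments, these let one move $a_1,a_2$ to primes in prescribed classes. In this way the multiplicativity relation in $\Gamma(A,I)/\Ee_2(I)$ can be deduced from the elementary ones. Equivalently, one shows that $\Ee_2(I)$ contains a principal congruence subgroup $\Gamma(A,J)$ with $J\subseteq I$. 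This reduces everything to the finite group $\Gamma(A/J,I/J)$, where the symbol relations can be checked directly.

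\textbf{Conclusion.} Once the Mennicke axioms hold, the universal property established in \cite{bms1967} applies: $\SK_1(A,I)$ is the universal Mennicke group. This produces a homomorphism $\SK_1(A,I)\to\Gamma(A,I)/\Ee_2(I)$ sending the class of a matrix to $[b,a]_I$. This map is inverse to the map $\Gamma(A,I)/\Ee_2(I)\to\SK_1(A,I)$ induced by $\Phi$, and this gives the isomorphism.
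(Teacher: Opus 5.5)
\textbf{There is a genuine gap: the rank-two Mennicke axioms, which are the whole content of the theorem, are asserted rather than proved, and some claims around them are wrong.}

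What is fine: your skeleton (surjective stability for surjectivity, then a rank-two Mennicke symbol on $\Gamma(A,I)/\Ee_2(I)$ fed into the universal property of \cite{bms1967}) is the skeleton of Serre's proof, which the paper simply cites \cite[Theorem 2]{serre1970}. Surjectivity is fine, and normality of $\Ee_2(I)$ is automatic from the paper's definition.

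\emph{The invariance axiom is not formal.} The class of $M=\begin{pmatrix}a&b\\c&d\end{pmatrix}\in\Gamma(A,I)$ depends only on $(a,b)$ for a trivial reason: two such matrices with the same first row differ by a left factor $\Ee_{21}(c'd-d'c)$, and $c'd-d'c\in I$. Moreover, $\Ee_{21}(-t)M\Ee_{21}(t)$ has first row $(a+tb,b)$. So the BMS axiom $[b,a+tb]_I=[b,a]_I$ for all $t\in A$ says exactly that conjugation by every $\Ee_{21}(t)$, $t\in A$, acts trivially on $\Gamma(A,I)/\Ee_2(I)$. This is a centrality statement, not a formal one, and centrality genuinely fails in rank two without units. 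Over $\z$ with $p\ge 7$, the image of $\Gamma(\z,p\z)$ in $\SL_2(\z)/\{\pm1\}\Ee_2(p\z)\cong\Delta(2,3,p)$ is an infinite normal subgroup of a centreless group.

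\emph{Your unit input does not address this.} Since $\Ee_2(I)$ is normal in $\SL_2(A)$ by definition, the relation $\Ee_{12}(x)\sim\Ee_{12}(u^2x)$ is empty modulo $\Ee_2(I)$. What is needed are identities among classes of non-elementary matrices, such as $[u^2b,a]_I=[b,a]_I$ (conjugation by $D(u)$). You then need an actual argument deriving from them both this invariance and multiplicativity. ``Move $a_1,a_2$ to primes in prescribed classes'' names Serre's ingredients but gives no identity and no deduction.

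\emph{The fallback you call ``equivalent'' is circular or false.} For any $J\subseteq I$, $\Gamma(A,I)$ and $\Ee_2(I)$ have the same image $\Gamma(A/J,I/J)$ in $\SL_2(A/J)$. This holds because $A/J$ is a finite product of local rings, to which Lemma~\ref{E_2} and Corollary~\ref{GE2I} apply. Hence $\Gamma(A,J)\subseteq\Ee_2(I)$ forces $\Ee_2(I)=\Gamma(A,I)$.
\begin{itemize}
\item When $\SK_1(A,I)=1$, this containment is just a restatement of the theorem.
\item When $A$ is totally imaginary and $\SK_1(A,I)=\mu_r\neq 1$, it is false, and this case is covered by the statement. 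For $K=\mathbb{Q}(\zeta_5)$, $S=S_\infty$ (so $|S|=2$) and $I=(1-\zeta_5)^5$, Theorem~\ref{bass-milnor-serre}(ii) gives $\SK_1(A,I)=\mu_5$.
\end{itemize}
For the same reason, no check of symbol relations inside a finite quotient $\Gamma(A/J,I/J)$ can recover $\SK_1(A,I)$.
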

\begin{proof}
    See \cite[Theorem 2]{serre1970}.
\end{proof}

%\begin{cor}\label{prime-mu-r}
%    If $A$ is a Dedekind domain of arithmetic type with infinitely many units and $\ppp \in Spec(A)$, then
%    \[ \Gamma(A, \ppp) = \Ee_2(\ppp)\]
%\end{cor}
%\begin{proof}
%    This is a consequence of Corollary \ref{gamma-over-EI} and Theorem \ref{bass-milnor-serre}. We may assume that $A$ is totally imaginary, as otherwise the result follows immediately. Since $\mu_{r}$ is a cyclic group of order $r$, it suffices to show that $v_p(r) = 0$ for every integer prime $p$.

%    Let $p$ be a fixed prime number. If there exists some prime ideal $\qqq \neq \ppp$ diving $(p)$, we have 
%    \[ \Bigg[  \frac{v_{\qqq}(\ppp)}{v_\qqq(p)} - \frac{1}{p - 1} \Bigg]_{[0, v_p(m)]} = \Bigg[ - \frac{1}{p - 1} \Bigg]_{[0, v_p(m)]}  = 0.\]
%    Hence,
%    \[ 0 = \Bigg[  \frac{v_{\qqq}(\ppp)}{v_\qqq(p)} - \frac{1}{p - 1} \Bigg]_{[0, v_p(m)]} \geq v_p(r) \geq 0.\]

%    Now, assume $(p) = \ppp^{\alpha}$, for some $\alpha \geq 1$. In this case, 
%    \[ v_p(r) = \Bigg[  \frac{v_{\ppp}(\ppp)}{v_\ppp(p)} - \frac{1}{p - 1} \Bigg]_{[0, v_p(m)]} = \Bigg[  \frac{1}{\alpha} - \frac{1}{p - 1} \Bigg]_{[0, v_p(m)]}.\]
%    Since $1/\alpha  - 1/(p -1) <1$, we conclude that $v_p(r) = 0$.
%\end{proof}

The next theorem is due to Serre and plays a key role in this work, by establishing the \textit{Congruence Subgroup Property}.

\begin{thm}[Congruence Subgroup Property]
    If $A = \mathcal{O}_{K,S}$, $|S| \geq 2$, and $A$ is not totally imaginary, then every subgroup of finite index is a congruence subgroup. 
    %If $A = \mathcal{O}_{K,S}$ and $|S| \geq 2$, then every subgroup of finite index of $\SL_2(A)$ contains $\Ee_2(I)$, for some nonzero ideal $I$. If, in addition, $A$ is not totally imaginary, then $\Ee_2(I) = \Gamma(A, I)$.
\end{thm}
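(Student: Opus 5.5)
Taking $n=2$ and a subgroup $H \leq \SL_2(A)$ of finite index, I will show that $H \supseteq \Gamma(A,I)$ for some nonzero ideal $I$ in two steps. First I show that $H$ contains some relative elementary group $\Ee_2(I)$. Then I invoke Theorem \ref{gamma-over-EI} together with Theorem \ref{bass-milnor-serre}(i): since $A$ is not totally imaginary, $\SK_1(A,I)=1$, so $\Gamma(A,I)=\Ee_2(I)\subseteq H$, and $H$ is a congruence subgroup. The second step is formal given the cited results, so all the work is in the first.

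For the first step, I replace $H$ by its normal core $N=\bigcap_{g}gHg^{-1}$. This is a normal subgroup of finite index $m$, since $H$ has only finitely many conjugates. For each $a\in A$ the element $\Ee_{12}(a)^m=\Ee_{12}(ma)$ lies in $N$, and the same holds for $\Ee_{21}$. So $N$ contains $\Ee_{12}(J)$ and $\Ee_{21}(J)$ for the nonzero ideal $J=mA$. However, $\Ee_2(J)$ is the normal closure of these elements in $\SL_2(A)$, and $N$ is normal, so $\Ee_2(J)\subseteq N\subseteq H$.

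The problem is that this argument uses $m$, which is not an ideal element in positive characteristic. In a function field, $m$ may vanish in $A$, since $\char(K)$ may divide $m$. To handle this I would use the unit group instead of powers. By Theorem \ref{dir} with $|S|\ge2$ there is a unit $u$ of infinite order, and conjugation by $\mathrm{diag}(u,u^{-1})$ sends $\Ee_{12}(a)$ to $\Ee_{12}(u^2a)$. Consider the additive subgroup $M=\{a : \Ee_{12}(a)\in N\}$. It has finite index in $A$ and is stable under multiplication by $u^{2}$ and $u^{-2}$, so it is a module over $\z[u^{2},u^{-2}]$. The aim is then to produce a nonzero ideal inside $M$. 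This is Serre's key argument, and I expect it to be the main obstacle: one needs that $\z[u^{\pm 2}]$-stable subgroups of finite index in $A$ contain a nonzero ideal. That should follow because $A$ is a finitely generated module over $\z[u^{\pm2}]$ (or over $\F_q[u^{\pm2}]$), up to finite index, when $u$ is a nonconstant unit. Here I would invoke Serre's original proof \cite{serre1970} rather than reprove it.

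Given such an ideal $J\subseteq M$, and the analogous one for $\Ee_{21}$, I intersect them and then pass to normal closures as above. This yields $\Ee_2(J)\subseteq H$, and the reduction in the first paragraph finishes the proof. In practice, since the statement is classical, the paper's proof will presumably just cite \cite[Theorem 2 and Corollaries]{serre1970} and combine it with Theorems \ref{gamma-over-EI} and \ref{bass-milnor-serre}.
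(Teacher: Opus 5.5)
Your proposal is correct and is essentially the paper's proof: the paper cites Serre's Proposition~1 for $\Ee_2(I)\subseteq H$, then uses Theorem~\ref{gamma-over-EI} together with $\SK_1(A,I)=1$ from Theorem~\ref{bass-milnor-serre}(i) to conclude $\Gamma(A,I)=\Ee_2(I)$. One correction to your positive-characteristic heuristic: $A$ need not be finitely generated over $\z[u^{\pm 2}]$, even up to finite index (e.g.\ $\z[1/6]$ over $\z[1/2]$ with $u=2$). You do not need that fact, because multiplication by $u^2$ is an automorphism of the finite group $A/M$; if $n$ is its order, then $(u^{2n}-1)A\subseteq M$, and this is a nonzero ideal since $u$ has infinite order.
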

\begin{proof}
    From \cite[Proposition 1]{serre1970}, every subgroup of finite index in $\SL_2(A)$ contains $\Ee_2(I)$, for some nontrivial ideal $I$. By Theorem \ref{gamma-over-EI}, $\Ee_2(I) = \Gg(A, I)$.
\end{proof}

%Serre proved that, if $A = \mathcal{O}_{K,S}$ and $|S| \geq 2$, then every subgroup of finite index of $\SL_2(A)$ contains $\Ee_2(I)$, for some nonzero ideal $I$ (see \cite[Proposition 1]{serre1970}). If, in addition, $A$ is not totally imaginary, then $\Ee_2(I) = \Gamma(A, I)$, establishing the \textit{Congruence Subgroup Property}: every subgroup of finite index in $\SL_2(A)$ is a congruence subgroup. {\color{red}{State this part as a theorem!!}}

In addition to the elementary matrices, another class of matrices plays a crucial role in this work, namely:
\[
D(u):=\begin{pmatrix}
    u & 0\\
    0 & u^{-1}
\end{pmatrix},
\]
where $u \in A^{\times}$. The following lemma establishes key relations concerning the matrices $\Ee_{12}(a)$, $\Ee_{21}(b)$, and $D(u)$.

\begin{lem} \label{elem}
    Let $A$ be a commutative ring. Then, for all $a, b \in A$ and $u \in A^\times$, we have:

    \begin{itemize}
    
        \item [(i)] $[\Ee_{12}(-a), D(u)]=\Ee_{12}(a(u^2-1))$;

         \item [(ii)] $[\Ee_{21}(-b), D(u^{-1})]=\Ee_{21}(b(u^2-1))$.
        
    \end{itemize}
\end{lem}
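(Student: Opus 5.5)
The plan is a direct matrix computation, using the commutator convention $[x,y]=xyx^{-1}y^{-1}$; I will check below that this is the convention under which both identities hold as written. The basic ingredients are two standard facts. First, $\Ee_{12}$ and $\Ee_{21}$ are additive in their argument, so $\Ee_{ij}(s)\Ee_{ij}(t)=\Ee_{ij}(s+t)$ and $\Ee_{ij}(s)^{-1}=\Ee_{ij}(-s)$. Second, the diagonal matrices $D(v)$, for $v\in\aa$, normalize the upper and lower unipotent subgroups.

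The first step is to compute the conjugation action of $D(v)$ by multiplying out the $2\times 2$ products. One gets
\[
D(v)\,\Ee_{12}(a)\,D(v)^{-1}=\Ee_{12}(v^{2}a), \qquad D(v)\,\Ee_{21}(b)\,D(v)^{-1}=\Ee_{21}(v^{-2}b).
\]
Indeed, conjugation by $D(v)$ multiplies the $(1,2)$-entry by $v\cdot v=v^{2}$ and the $(2,1)$-entry by $v^{-1}\cdot v^{-1}=v^{-2}$.

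For (i), I expand the commutator:
\[
[\Ee_{12}(-a),D(u)]=\Ee_{12}(-a)\bigl(D(u)\Ee_{12}(a)D(u)^{-1}\bigr)=\Ee_{12}(-a)\Ee_{12}(u^{2}a).
\]
By additivity this equals $\Ee_{12}(a(u^{2}-1))$.

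For (ii), I take $v=u^{-1}$. Then $D(u^{-1})\Ee_{21}(b)D(u^{-1})^{-1}=\Ee_{21}(u^{2}b)$, so
\[
[\Ee_{21}(-b),D(u^{-1})]=\Ee_{21}(-b)\Ee_{21}(u^{2}b)=\Ee_{21}(b(u^{2}-1)).
\]

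No real obstacle is expected. The only delicate point is the commutator convention: with the other convention $x^{-1}y^{-1}xy$, the exponents would come out as $u^{-2}$ instead of $u^{2}$. I will therefore state the convention $[x,y]=xyx^{-1}y^{-1}$ explicitly at the start of the proof.
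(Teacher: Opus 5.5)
Your computation is correct and is the direct matrix verification the paper means by ``routine calculations'': conjugation by $D(v)$ multiplies the argument of $\Ee_{12}$ by $v^{2}$ and that of $\Ee_{21}$ by $v^{-2}$, and with $[x,y]=xyx^{-1}y^{-1}$ both identities then follow from additivity. You are right that the convention matters, and yours is the one consistent with the paper's later use in Lemma~\ref{gamcom}, namely $[\Ee_{12}(b_i),D(1+a_i)]=\Ee_{12}(-b_i(a_i^2+2a_i))$.
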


\begin{proof}
    The proof follows from routine calculations.
\end{proof}

This work focuses on the study of $\Gg(A, I)$ and the two following congruence subgroups:
\[ \begin{aligned}
    \Gamma_1(A, I) &:=  \bigg\{\begin{pmatrix}
        a & b \\ c & d
    \end{pmatrix} \in \SL_2(A) : a-1,\ d-1,\ c \in I \bigg\},\\
    \Gamma_0(A, I) &:= \bigg\{\begin{pmatrix}
        a & b \\ c & d
    \end{pmatrix} \in \SL_2(A) : c \in I   \bigg\}.
\end{aligned}\]
Throughout this paper, we write $\hat{\Gg}(A, I)$ to denote any of the three groups: $\Gg(A, I)$, $\Gg_1(A, I)$, or $\Gg_0(A, I)$. Observe that they are related by the following group extensions:

\[ 1 \rightarrow \Gg(A, I) \rightarrow \SL_2(A) \overset{\pi_*}{\rightarrow} \SL_2(A/I) \rightarrow 1,\]
\[ 1 \rightarrow \Gg(A, I) \rightarrow \Gg_1(A, I) \overset{\mu}{\rightarrow} A/I \rightarrow 0,\]
\[ 1 \rightarrow \Gg_1(A, I)\rightarrow \Gg_0(A, I) \overset{\lambda}{\rightarrow} (A/I)^{\times} \rightarrow 1,\]
where $\pi_{*}$ is the natural map defined in Subsection \ref{subsec:cong} (for $n = 2$), $\mu$ is defined by $\begin{pmatrix}
    a & b \\ c & d 
\end{pmatrix} \mapsto \overline{b}$, and $\lambda$ is defined by $\begin{pmatrix}
    a & b\\ c & d
\end{pmatrix} \mapsto \overline{a}$.
%{\color{red}{(It is a good idea to the exact sequence involving $\Gamma, \Gamma_1, \Gamma_0, \SL_2$)}}

\subsection{$\operatorname{GE}_2$-rings}

\begin{dfn}
A ring $A$ is called a $\operatorname{GE}_2$-ring whenever $\SL_2(A) = \Ee_2(A)$. 
\end{dfn}

Being a $\operatorname{GE}_2$-ring endows $\SL_2(A)$ with several useful structural properties. We refer the reader to the classical works of Cohn \cite{cohn1966} and Silvester \cite{silv1982} for further details. To provide specific examples of $\operatorname{GE}_2$-rings that will be used later, we present two fundamental results as follows.

\begin{lem}
\label{GE1}
    Any semilocal ring is a $\operatorname{GE}_2$-ring.
\end{lem}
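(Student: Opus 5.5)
Let $A$ be semilocal with Jacobson radical $J$, so $A/J \simeq k_1\times\cdots\times k_r$ is a finite product of fields. The plan is to reduce a matrix $M=\begin{pmatrix} a & b\\ c & d\end{pmatrix}\in\SL_2(A)$ to the identity by multiplying on the left and right by elementary matrices $\Ee_{12}(x)$, $\Ee_{21}(y)$. Once every matrix is a product of elementary matrices, we get $\SL_2(A)=\Ee_2(A)$.

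\textbf{Step 1: make the $(1,1)$-entry a unit.} The row $(a,b)$ is unimodular, so $aA+bA=A$. I want $t\in A$ with $a+tb\in A^\times$; then $M\,\Ee_{21}(t)$ has $(1,1)$-entry $a+tb$. Since an element of $A$ is a unit iff its image in each $k_i$ is nonzero, it suffices to choose $t$ modulo $J$ componentwise, by the Chinese remainder theorem for $A/J\simeq\prod k_i$. In each factor $k_i$, the images $\bar a_i,\bar b_i$ are not both zero.
\begin{itemize}
\item If $\bar a_i\neq 0$, take $\bar t_i=0$.
\item If $\bar a_i=0$, then $\bar b_i\neq 0$; take $\bar t_i=1$.
\end{itemize}
In either case $\bar a_i+\bar t_i\bar b_i\neq0$. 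Lifting $(\bar t_i)_i$ to $t\in A$ makes $a+tb$ a unit.

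\textbf{Step 2: clear the off-diagonal entries.} Now suppose $a=u$ is a unit. Left multiplication by $\Ee_{21}(-cu^{-1})$ kills the $(2,1)$-entry. Right multiplication by $\Ee_{12}(-u^{-1}b)$ then kills the $(1,2)$-entry. The determinant stays $1$, so the result is $D(u)$.

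\textbf{Step 3: write $D(u)$ as a product of elementary matrices.} This is the standard identity
\[
D(u)=\Ee_{12}(u)\Ee_{21}(-u^{-1})\Ee_{12}(u)\cdot\Ee_{12}(-1)\Ee_{21}(1)\Ee_{12}(-1),
\]
using $w(u)=\Ee_{12}(u)\Ee_{21}(-u^{-1})\Ee_{12}(u)=\begin{pmatrix}0&u\\-u^{-1}&0\end{pmatrix}$ and $D(u)=w(u)w(1)^{-1}$. This is a routine check.

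Steps 2 and 3 are mechanical. The only real content is Step 1, the unit-finding step, and the CRT argument over the finitely many maximal ideals handles it. In particular there is no problem with small residue fields such as $\F_2$, because the choices $\bar t_i\in\{0,1\}$ are always available.
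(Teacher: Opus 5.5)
Your proof is correct, and it takes a different route from the paper. The paper gives no argument of its own: it cites Silvester for the semilocal case and Cohn, Theorem 4.1, for the local case.

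You instead give a self-contained argument in three steps:
\begin{enumerate}
\item You prove directly that a commutative semilocal ring has stable rank one. This uses the Chinese remainder theorem over the finitely many maximal ideals, choosing $\bar t_i\in\{0,1\}$ in each residue field, together with the fact that an element is a unit iff it lies in no maximal ideal.
\item You reduce the matrix to $D(u)$ by one left and one right elementary multiplication.
\item You write $D(u)=w(u)w(1)^{-1}$.
\end{enumerate}
All three steps check out. In particular, $M\,\Ee_{21}(t)$ does have $(1,1)$-entry $a+tb$, and your product for $w(u)$ is right.

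Your route is explicit and makes clear that small residue fields such as $\F_2$ cause no trouble. It also applies verbatim to $A/I$ for a nonzero ideal $I$ of a Dedekind domain, since that quotient is semilocal. So it would give Corollary~\ref{GE2I} directly, without passing through Lemma~\ref{prod-ge2}.

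What the citations buy is generality: the cited sources cover rings that need not be commutative. There $A/J$ is a product of matrix rings over division rings, and a componentwise choice of $t$ from $\{0,1\}$ no longer suffices. Your argument uses commutativity (residue fields and determinants). Since the paper works only with commutative rings, nothing it needs is lost.
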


\begin{proof}
    See \cite[p. 245]{silv1982} for the general case, and \cite[Theorem 4.1]{cohn1966} for the local case.
\end{proof}

\begin{lem} \label{prod-ge2}
    The direct product of a finite number of $\operatorname{GE}_2$-rings is again a $\operatorname{GE}_2$-ring.
\end{lem}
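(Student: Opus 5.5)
The plan is to reduce everything to the componentwise behaviour of matrices over a product ring. Let $A = A_1 \times \cdots \times A_n$ with each $A_i$ a $\operatorname{GE}_2$-ring. By induction on $n$ it suffices to treat $n = 2$, since $A_1 \times \cdots \times A_n \simeq (A_1 \times \cdots \times A_{n-1}) \times A_n$ and the inductive hypothesis makes the first factor a $\operatorname{GE}_2$-ring. So assume $A = A_1 \times A_2$.

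The first step is to identify $\SL_2(A)$ with $\SL_2(A_1) \times \SL_2(A_2)$. A matrix over $A$ is the same as a pair of matrices over $A_1$ and $A_2$, and its determinant is the pair of the two determinants. Hence the determinant is $1$ exactly when both components have determinant $1$, and the identification is an isomorphism of groups.

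Next, I observe that $\Ee_2(A)$ contains the elementary matrices $\Ee_{ij}((a,0))$ and $\Ee_{ij}((0,b))$ for $a \in A_1$, $b \in A_2$. Under the identification above these correspond to $(\Ee_{ij}(a), I)$ and $(I, \Ee_{ij}(b))$. Therefore $\Ee_2(A)$ contains $\Ee_2(A_1) \times \{I\}$ and $\{I\} \times \Ee_2(A_2)$.

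Whether $\Ee_2$ is read as the subgroup generated by elementary matrices or as the normal closure, the same componentwise identification applies. Conjugating $(\Ee_{ij}(a), I)$ by $(g_1, g_2)$ gives $(g_1 \Ee_{ij}(a) g_1^{-1}, I)$. So in either reading the generators of $\Ee_2(A_1)$ placed in the first coordinate lie in $\Ee_2(A)$, and likewise for the second coordinate.

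Since each $A_i$ is a $\operatorname{GE}_2$-ring, $\Ee_2(A_i) = \SL_2(A_i)$. It follows that $\Ee_2(A) \supseteq \SL_2(A_1) \times \{I\}$ and $\Ee_2(A) \supseteq \{I\} \times \SL_2(A_2)$, so $\Ee_2(A)$ contains their product, which is all of $\SL_2(A)$. The inclusion $\Ee_2(A) \subseteq \SL_2(A)$ is automatic, so $\SL_2(A) = \Ee_2(A)$ and $A$ is a $\operatorname{GE}_2$-ring.

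There is no real obstacle here. The only point needing care is to check that the decomposition respects both the determinant condition and the chosen definition of $\Ee_2$ (generated versus normally generated), and the argument above covers both readings.
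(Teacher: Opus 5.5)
Your argument is correct. It differs from the paper only in that the paper gives no argument at all: it simply refers to Theorem 3.1 of Cohn's 1966 paper on $\GL_2$ of a ring.

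You instead verify the statement directly. You use $\SL_2(A_1\times A_2)\simeq \SL_2(A_1)\times\SL_2(A_2)$ and observe that $\Ee_{ij}((a,0))$ and $\Ee_{ij}((0,b))$ correspond to $(\Ee_{ij}(a),I)$ and $(I,\Ee_{ij}(b))$.

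What the citation buys is generality. Cohn works with possibly noncommutative rings and defines $\operatorname{GE}_2$ by requiring $\GL_2$ to be generated by elementary and diagonal matrices. Invoking his theorem here therefore tacitly relies on the standard equivalence, for commutative rings, with the paper's condition $\SL_2(A)=\Ee_2(A)$.

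What your route buys is self-containment and a direct match with the paper's own definition, in which $\Ee_2(A)$ is the normal subgroup generated by the elementary matrices. Your remark that conjugation by $(g_1,g_2)$ acts coordinatewise is exactly what makes the argument insensitive to whether $\Ee_2$ means the generated or the normally generated subgroup.

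The induction on $n$ is harmless but unnecessary: the same coordinatewise argument handles $A_1\times\cdots\times A_n$ in one step.
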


\begin{proof}
\label{GE2}
    See \cite[Theorem 3.1]{cohn1966}. 
\end{proof}

From these two results, we obtain: 

\begin{cor}
    \label{GE2I}
    The quotient of a Dedekind domain by a nontrivial ideal is a $\operatorname{GE}_2$-ring.
\end{cor}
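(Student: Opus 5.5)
The plan is to reduce the statement to Lemma \ref{GE1} and Lemma \ref{prod-ge2} via the Chinese Remainder Theorem. Let $A$ be a Dedekind domain and $I$ a nontrivial ideal. By nontrivial we mean $I \neq 0$ and $I \neq A$; the case $I = A$ gives the zero ring, for which the statement is vacuous. Since $A$ is Dedekind, $I$ factors uniquely as
\[
I = \ppp_1^{\alpha_1}\cdots \ppp_r^{\alpha_r}
\]
with distinct nonzero primes $\ppp_i$ and exponents $\alpha_i \geq 1$. Nonzero primes in a Dedekind domain are maximal, so the ideals $\ppp_i^{\alpha_i}$ are pairwise comaximal. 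Two distinct maximal ideals are comaximal, and so are powers of comaximal ideals, because $\ppp_i + \ppp_j = A$ implies $\ppp_i^{\alpha_i} + \ppp_j^{\alpha_j} = A$ after raising $1 = x + y$ to a high power. The Chinese Remainder Theorem then gives a ring isomorphism
\[
A/I \simeq \prod_{i=1}^r A/\ppp_i^{\alpha_i}.
\]

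Next, each factor $A/\ppp_i^{\alpha_i}$ is a local ring. Its prime ideals correspond to primes of $A$ containing $\ppp_i^{\alpha_i}$. Any such prime contains $\ppp_i$, and since $\ppp_i$ is maximal it equals $\ppp_i$. So $\ppp_i/\ppp_i^{\alpha_i}$ is the unique maximal ideal; the paper already records this fact for $\alpha\ge1$. A local ring is in particular semilocal, so by Lemma \ref{GE1} each factor is a $\operatorname{GE}_2$-ring.

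By Lemma \ref{prod-ge2}, the finite product $\prod_i A/\ppp_i^{\alpha_i}$ is then a $\operatorname{GE}_2$-ring. Being a $\operatorname{GE}_2$-ring is clearly invariant under ring isomorphism, since an isomorphism induces an isomorphism $\SL_2 \to \SL_2$ carrying elementary matrices to elementary matrices. Hence $A/I$ is a $\operatorname{GE}_2$-ring.

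An alternative route is to observe directly that $A/I$ has only finitely many maximal ideals, namely the $\ppp_i/I$, so it is semilocal and Lemma \ref{GE1} applies immediately. I would mention this as a one-line variant.

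There is no real obstacle here. The only points needing care are the comaximality of the prime powers and the exclusion of the degenerate cases $I=0$ and $I=A$ through the meaning of ``nontrivial''.
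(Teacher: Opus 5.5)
Your proof is correct and matches the paper's argument. You factor $I$ into prime powers, observe that each $A/\ppp_i^{\alpha_i}$ is local and hence $\operatorname{GE}_2$ by Lemma \ref{GE1}, and conclude with Lemma \ref{prod-ge2}, making explicit the Chinese Remainder Theorem and comaximality step that the paper leaves implicit. Your one-line variant, that $A/I$ is already semilocal, is also valid and would bypass Lemma \ref{prod-ge2} entirely, since Lemma \ref{GE1} covers semilocal rings.
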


\begin{proof}
    Let $A$ be a Dedekind domain and $(0) \neq I \leq A$. We can write $I$ uniquely as
    \[
    I = \ppp_1^{\alpha_1} \ppp_2^{\alpha_2} \cdots \ppp_n^{\alpha_n},
    \]
   where $\ppp_i \in \Spec(A)$ are disjoint primes. Since each quotient $A/\ppp_i^{\alpha_i}$ is a local ring, Lemma~\ref{GE1} guarantees that these quotients are $\operatorname{GE}_2$-rings. By Lemma~\ref{prod-ge2}, it immediately follows that the same holds for $A/I$.
\end{proof}

%\begin{rem}
    %If $A$ is a Dedekind domain of arithmetic type, it follows that $A/I$ is finite \cite[Lemma 1.2]{BBT2025-2}, and hence a $\GE_2$-ring \cite[Corollary 2.3]{BBT2025-2}. {\color{red}{maybe remove this remark!!}}
%\end{rem}

A consequence of Serre's Theorem \ref{gamma-over-EI} is that Dedekind domains of arithmetic type with infinitely many units are $\GE_2$.

\begin{cor}
\label{A-is-GE2}
    If $A$ is a Dedekind domain of arithmetic type with infinitely many units, then $A$ is a $\operatorname{GE}_2$-ring. 
\end{cor}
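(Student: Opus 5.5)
Apply Theorem \ref{gamma-over-EI} with the unit ideal $I = A$. Since $A$ has infinitely many units, Theorem \ref{dir} gives $|S| \geq 2$, so the hypothesis of that theorem holds. With $I = A$, the principal congruence subgroup $\Gamma(A, A)$ is the kernel of $\SL_2(A) \to \SL_2(A/A) = \SL_2(0)$, which is trivial. Hence $\Gamma(A,A) = \SL_2(A)$. Likewise $\Ee_2(A)$, the normal subgroup generated by all elementary matrices with entries in $A$, is exactly the group $\Ee_2(A)$ in the definition of a $\GE_2$-ring. Theorem \ref{gamma-over-EI} therefore yields
\[
\SL_2(A)/\Ee_2(A) \simeq \SK_1(A, A).
\]

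It then suffices to show that $\SK_1(A, A)$ is trivial. If $A$ is not totally imaginary, this is Theorem \ref{bass-milnor-serre}(i). If $A$ is totally imaginary, then by definition $S = S_\infty$ consists of complex places. Since $A$ has infinitely many units, $|S| \geq 2$, so this case can occur. Here I would use Theorem \ref{bass-milnor-serre}(ii), under which $\SK_1(A, A) = \mu_r$ with
\[
v_p(r) = \min_{\ppp \mid p}\left[ \frac{v_\ppp(A)}{v_\ppp(p)} - \frac{1}{p-1}\right]_{[0,v_p(m)]}.
\]
Since $v_\ppp(A) = 0$ for every prime $\ppp$, the bracketed quantity is $-1/(p-1) < 0$. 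Its floor is negative, so clipping to $[0, v_p(m)]$ gives $0$. Thus $v_p(r) = 0$ for every prime $p$, so $r = 1$ and $\mu_r$ is trivial.

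Combining the two cases gives $\SL_2(A) = \Ee_2(A)$, so $A$ is a $\GE_2$-ring. The only delicate point is the totally imaginary case, where the claim relies on the convention that Theorem \ref{bass-milnor-serre} (Bass--Milnor--Serre) applies to the unit ideal $I = A$. If one prefers not to rely on that, an alternative is to use $\SK_1(A,A) = \SK_1(A)$, the classical result that this group is trivial for rings of $S$-integers, together with Serre's stable-range identification in Theorem \ref{gamma-over-EI}.
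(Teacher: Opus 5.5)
Your proof is correct and follows the paper's argument: take $I = A$ in Theorem~\ref{gamma-over-EI}, so that $\Gamma(A,A)=\SL_2(A)$, and use Theorem~\ref{bass-milnor-serre} to get $\SK_1(A,A)=1$. You also make explicit two points the paper leaves implicit: that $|S|\geq 2$ follows from the $S$-unit theorem, and that in the totally imaginary case the formula of Theorem~\ref{bass-milnor-serre}(ii) with $I=A$ gives $v_p(r)=0$ for every $p$, hence $r=1$.
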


\begin{proof}
   Setting $I= A$, we obtain $\Gamma(A, A) = \SL_2(A)$. Since $\SK_1(A, A) = 1$ (by Theorem \ref{bass-milnor-serre}), Theorem \ref{gamma-over-EI} gives $\SL_2(A)=\Ee_2(A)$.
\end{proof}

%\begin{proof}
%Pick $I_1=A=I_2$ so that $\widetilde{\Gamma}(I_1, I_2) = \SL_2(A)$, 
%$\Ee_2(I_1,I_2) = \Ee_2(A)$ and $\SK_1(A, AA) = 0$ (since $A$ is not totally imaginary).
%\end{proof}

%\begin{rem}
%Let $A$ be a Dedekind domain of arithmetic type with infinitely many units. It is well-known that $\SL_2(A):=\ker(\det_2)$, where $\det_2: \GL_2(A) \arr \aa$ is the determinant homomorphism. By the above corollary, we have that $\Ee_2(A)$ is normal in 
%$\GL_2(A)$. This is a notably fact because there are examples (due to A. Suslin, see 
%\cite{sus1976}) of Dedekind domains for which $\Ee_2(R)$ is not normal in $\GL_2(R)$.
%\end{rem}

\begin{lem}\label{surj-GE2}
Let  $f: A \arr B$ be a surjective homomorphism of  rings. If $B$ is a 
$\GE_2$-ring, then the natural map $f_\ast:\SL_2(A) \arr \SL_2(B)$ is surjective.
\end{lem}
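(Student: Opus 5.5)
Since $B$ is a $\GE_2$-ring, $\SL_2(B) = \Ee_2(B)$. The plan is to reduce surjectivity to lifting generators: it suffices to show that every generator of $\Ee_2(B)$ lies in the image of $f_\ast$, because $f_\ast$ is a group homomorphism and its image is a subgroup of $\SL_2(B)$.

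First I fix what $\Ee_2(B)$ means. Following Subsection \ref{subsec:cong}, $\Ee_2(B)$ is the normal subgroup of $\SL_2(B)$ generated by the elementary matrices $\Ee_{12}(b)$ and $\Ee_{21}(b)$ with $b \in B$. If $\Ee_2(B)$ is read as the subgroup generated by these matrices, each one lifts directly. Choose $a \in A$ with $f(a) = b$, using surjectivity of $f$; then
\[
f_\ast(\Ee_{12}(a)) = \Ee_{12}(b), \qquad f_\ast(\Ee_{21}(a)) = \Ee_{21}(b),
\]
since $f_\ast$ is computed entrywise and $f(0)=0$, $f(1)=1$. Hence $\Ee_2(B) \subseteq \im(f_\ast)$.

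The one point needing care is normal closure, since a conjugate $g\,\Ee_{ij}(b)\,g^{-1}$ with $g \in \SL_2(B)$ arbitrary is not obviously in the image. This is resolved by the hypothesis itself. The subgroup generated by elementary matrices, call it $E$, lies in $\SL_2(B)$, and the $\GE_2$ condition is exactly the statement that $E = \SL_2(B)$. In particular $E$ is all of $\SL_2(B)$, hence normal, so it coincides with its normal closure. Therefore every element of $\SL_2(B)$ is a finite product $\prod_k \Ee_{i_kj_k}(b_k)$. Lifting each factor as above gives $\prod_k \Ee_{i_kj_k}(a_k) \in \SL_2(A)$, which maps onto the given element.

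If instead one insists on the normal-closure reading with $E \neq \SL_2(B)$ a priori, the argument above still works. Then $\SL_2(B) = \Ee_2(B)$ is generated by conjugates $g\,\Ee_{ij}(b)\,g^{-1}$ with $g \in \SL_2(B)$, and I would argue by bootstrapping. The image $\im(f_\ast)$ contains $E$, and every $g \in \SL_2(B)$ lies in $E$ by hypothesis, so it is a product of elementary matrices and hence also in $E \subseteq \im(f_\ast)$. Thus each generating conjugate is a product of elements of $\im(f_\ast)$, and $\im(f_\ast) = \SL_2(B)$. I expect no real obstacle; the only subtlety is this bookkeeping about normal closures.
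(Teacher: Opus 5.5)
Your main argument is correct and is the paper's proof. You lift each elementary matrix $\Ee_{ij}(b)$ to $\Ee_{ij}(a)$ with $f(a)=b$ and use that $\im(f_\ast)$ is a subgroup. The paper likewise takes $\GE_2$ to mean that $\SL_2(B)$ is generated as a group by the matrices $\Ee_{12}(b)$ and $\Ee_{21}(b)$. This is the standard (Cohn) meaning, even though Subsection~\ref{subsec:cong} literally defines $\Ee_2$ as a normal closure.

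Your final paragraph, however, is circular and should be dropped.
Under the normal-closure reading, the hypothesis says only that $\SL_2(B)$ equals the normal closure $N$ of the subgroup $E$ generated by elementary matrices. It does not say that an arbitrary $g\in\SL_2(B)$ lies in $E$. Your sentence ``every $g \in \SL_2(B)$ lies in $E$ by hypothesis'' assumes $E=\SL_2(B)$, which is exactly what that reading does not give you. For $n=2$ there is no general reason for $E$ to be normal in $\SL_2(B)$, since Suslin's normality theorem needs $n\ge 3$.

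To lift a generator $g\,\Ee_{ij}(b)\,g^{-1}$ with $g\notin E$, the natural route is to lift $g$ itself, and that is the very surjectivity you are trying to prove. So your argument, like the paper's, proves the lemma only when $\GE_2$ means the subgroup generated by elementary matrices. You should state that you use this meaning rather than claim the argument ``still works'' for the normal-closure definition.
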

\begin{proof}
Since $B$ is a $\GE_2$-ring, $\SL_2(B)$ is generated by the elementary matrices $\Ee_{12}(b_1)$
and $\Ee_{21}(b_2)$, with $b_1, b_2\in B$. Let $a_i \in A$, such that $b_i=f(a_i), i=1,2$. Hence $f_\ast(\Ee_{kl}(a_i))=\Ee_{kl}(b_i)$ for $k,l \in \{1,2\}$, with $k\neq l$.
\end{proof}

\section{The abelianization of congruence subgroups of \texorpdfstring{$\SL_2$}{Lg} over local rings} \label{sec:local}

As we will see in Section \ref{sec3}, the problem of computing the abelianization of $\hat{\Gg}(A, \ppp)$, when $A$ is a Dedekind domain of arithmetic type and $\ppp$ is a prime ideal, can be reduced to computing the abelianization of $\hat{\Gg}(A/\ppp, \ppp/\ppp^\alpha)$.
%, $\Gg_1(A/\ppp^\alpha, \ppp/\ppp^\alpha)$ and $\Gg_0(A/\ppp^\alpha, \ppp/\ppp^{\alpha})$, respectively.

Notably, if $A$ is a Dedekind domain, $A/\ppp^\alpha$ is a local ring with maximal ideal $\ppp/\ppp^{\alpha}$. Hence, in this section, we investigate the structure of congruence subgroups of $\SL_2$ over local rings in order to compute $H_1(\Gamma(A, \m_A), \z)$, $H_1(\Gg_1(A, \m_A), \z)$ and $H_1(\Gg_0(A, \m_A), \z)$. Throughout this section, $A$ is always a commutative ring with $1 \neq 0$, and whenever $A$ is local we denote its maximal ideal by $\m_A$.

\subsection{Structure of congruence subgroups} Here, we present some structural results about the commutator $[\Gg(A, \m_A), \Gg(A,\m_A)]$ and the abelianization of congruence subgroups of $\SL_2(A)$. 

%The following three definitions are needed for Lemma \ref{quot}, which describes the quotients $\Gg^{(n)}(A, I)/E_n(I)$ for local rings. 
%\begin{dfn}
%    Let $A$ be a ring with an ideal $I$. We say that $\alpha = (a_1, ..., a_n) \in A^{n}$ is $I$-\textit{unimodular} if there is a homomorphism $f: A^{n} \rightarrow A$ such that $f(\alpha) = 1$ and $\alpha \equiv (1, 0, ..., 0)\ (\operatorname{mod}\ I)$.
%\end{dfn}
%\begin{dfn}
%    We say that a ring $A$ satisfies condition $\rm{SR}_n(A, I)$ if, given $m \geq n$ and an $I$-unimodular element $\alpha = (a_1, ..., a_m) \in A^m$, there exist $a_i' = a_i + b_ia_m$, with $b_i \in I$ ($1 \leq i < m$) such that $(a_1', ..., a_{m-1}') \in A^{m-1}$ is $A$-unimodular.
%\end{dfn}
%\begin{dfn}
%    We say that $A$ satisfies condition $\rm{SR}'_n(A, I)$ if $\GL_n(A, I):= \ker(\GL_n(A) \rightarrow \GL_n(A/I))$ acts transitively on the $I$-unimodular elements in $A^n$.
%\end{dfn}

\begin{lem}\label{quot}
    Let $A$ be a local ring and let $I$ be an ideal of $A$. Then, for any $n \geq 2$,
    \[\frac{\Gg^{(n)}(A, I)}{\Ee_{n}(I)} \simeq \SK_1(A, I).\]
\end{lem}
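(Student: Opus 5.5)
The plan is to prove the statement as a stability result. Over a local ring the relative stable rank is $1$, so there is no obstruction to reducing from $\Gg^{(n)}(A,I)$ down to $\SL(A,I)$; the quotient $\Gg^{(n)}(A,I)/\Ee_n(I)$ then coincides with its stable version, which is $\SK_1(A,I)$ by definition. Concretely, I would show two things for every $n\ge 2$:

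\begin{itemize}
\item[(a)] \emph{Surjective stability}: every matrix of $\Gg^{(n+1)}(A,I)$ is, modulo $\Ee_{n+1}(I)$, equal to the image of a matrix in $\Gg^{(n)}(A,I)$.
\item[(b)] \emph{Injective stability}: $\Gg^{(n)}(A,I)\cap \Ee_{n+1}(I)=\Ee_n(I)$ inside $\Gg^{(n+1)}(A,I)$.
\end{itemize}

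Together, (a) and (b) give isomorphisms $\Gg^{(n)}(A,I)/\Ee_n(I)\simeq \Gg^{(n+1)}(A,I)/\Ee_{n+1}(I)$ for all $n\ge 2$. Passing to the colimit then yields $\Gg^{(n)}(A,I)/\Ee_n(I)\simeq \SK_1(A,I)$. One also needs $\Ee_n(I)$ to be normal in $\Gg^{(n)}(A,I)$, but it is normal in $\SL_n(A)$ by definition, so the quotient makes sense.

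For (a), I would use an explicit reduction available because $A$ is local. Take $g\in\Gg^{(n+1)}(A,I)$. Its diagonal entries lie in $1+I\subseteq A^\times$, since $I\subseteq\m_A$ when $I\neq A$; the case $I=A$ is just the statement that $\SL_n(A)=\Ee_n(A)$ for local $A$, i.e.\ Lemma~\ref{GE1} and its $n\times n$ analogue. So the entry $g_{n+1,n+1}$ is a unit. Multiplying on the left and on the right by elementary matrices $\Ee_{i,n+1}(x)$ and $\Ee_{n+1,j}(y)$ clears the last row and column. Here $x=-g_{i,n+1}g_{n+1,n+1}^{-1}\in I$, and similarly $y\in I$, because the off-diagonal entries are in $I$. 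These matrices therefore lie in $\Ee_{n+1}(I)$. The result is $\operatorname{diag}(g',u)$ with $u\in 1+I$ and $g'\in \GL_n(A)$, $g'\equiv 1 \pmod I$. To move the unit $u$ into the $n\times n$ block, I would use the Whitehead-type identity
\[
\operatorname{diag}(u^{-1},u)=\Ee_{12}(\ast)\Ee_{21}(\ast)\Ee_{12}(\ast)\Ee_{21}(\ast)
\]
with all entries in $I$. This identity is available for $u\equiv 1 \pmod I$: for example, $\Ee_{21}(1)\Ee_{12}(u-1)\Ee_{21}(-u^{-1})\Ee_{12}(\dots)$ type decompositions, with coefficients chosen in $I$ using $u-1\in I$. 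After this step $g'$ lands in $\Gg^{(n)}(A,I)$.

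For (b), I would invoke the relative injective stability theorem of Bass--Vaserstein: if $A$ satisfies $\mathrm{SR}_r$, then $\Gg^{(n)}(A,I)/\Ee_n(I)\to \SK_1(A,I)$ is injective for $n\ge r+1$. Local rings have stable rank $1$, so this covers all $n\ge 2$, and it is exactly the range the lemma needs. Alternatively, one can cite Bass's \emph{Algebraic K-theory} (Chapter V) or Vaserstein's results directly for rings of stable rank one.

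The main obstacle is (b). Injectivity at $n=2$ is genuinely delicate, and I would not attempt it from scratch; instead I would rely on the literature (Bass, Vaserstein) for rings with $\mathrm{sr}(A)=1$. A second, more technical point in (a) is verifying that the elementary factors realizing $\operatorname{diag}(u^{-1},u)$ really have entries in $I$, rather than merely in $A$, so that they lie in $\Ee_n(I)$ and not only in $\Ee_n(A)$.
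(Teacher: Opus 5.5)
Your overall strategy matches the paper's. The paper's proof is only a citation of Bass's stability theorems (Chap.~V, Prop.~3.4 and Thm.~4.2). These give $\GL_n(A,I)/\Ee_n(I)\simeq \Kk_1(A,I)$ for all $n\ge 2$ over a local ring, and one then restricts to determinant one. Your step (b) rests on exactly that input.

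\textbf{Error in step (a).} The problem is in the part you do by hand. The identity $\operatorname{diag}(u^{-1},u)=\Ee_{12}(\ast)\Ee_{21}(\ast)\Ee_{12}(\ast)\Ee_{21}(\ast)$ with all entries in $I$ is false in general. Modulo $I^2$, any product of matrices $\Ee_{ij}(x)$ with $x\in I$ is congruent to the identity plus an off-diagonal matrix, so its diagonal entries are $\equiv 1 \pmod{I^2}$. For $A=\z_{(p)}$, $I=pA$, $u=1+p$, no such factorization exists. Your suggested factors $\Ee_{21}(1)$ and $\Ee_{21}(-u^{-1})$ indeed do not have entries in $I$.

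\textbf{The repair.} What rescues the step is that $\Ee_2(I)$ is the \emph{normal} closure in $\SL_2(A)$. For $x\in I$,
\[
\Ee_{21}(1)\Ee_{12}(x)\Ee_{21}(-1)=\begin{pmatrix}1-x & x\\ -x & 1+x\end{pmatrix}\in\Ee_2(I).
\]
Multiplying this on the left by $\Ee_{21}(x(1-x)^{-1})$ and on the right by $\Ee_{12}(-x(1-x)^{-1})$, both with entries in $I$, gives $D(1-x)$. Every $u\in 1+I$ has the form $1-x$, so $D(u)\in\Ee_2(I)$. Embedding this block in positions $n,n+1$ of $\Ee_{n+1}(I)$ completes (a).

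\textbf{With (a) repaired, (b) is unnecessary.} The step you called the main obstacle disappears. Iterate your clearing step: pivot on the diagonal entries, which lie in $1+I\subseteq A^\times$, using only coefficients in $I$. This reduces any $g\in\Gg^{(n)}(A,I)$ with $I\subseteq\m_A$ to $\operatorname{diag}(u_1,\dots,u_n)$ with $u_i\in 1+I$ and $\prod u_i=1$. That matrix is the product over $k$ of the matrices $\operatorname{diag}(1,\dots,1,v_k,v_k^{-1},1,\dots,1)$, with $v_k=u_1\cdots u_k$ in position $k$. Each factor lies in $\Ee_n(I)$ by the computation above. Hence $\Gg^{(n)}(A,I)=\Ee_n(I)$ for every $n\ge 2$, so $\SL(A,I)=\Ee(I)$ and $\SK_1(A,I)=1$. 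Both sides of the lemma are therefore trivial; for $I=A$ this is just $\SL_n(A)=\Ee_n(A)$.

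\textbf{Comparison with the paper.} This elementary route is self-contained and proves Lemma~\ref{E_2} directly. The paper instead derives Lemma~\ref{E_2} from the present lemma together with the vanishing of $\SK_1(A,I)$ for $I$ in the Jacobson radical. The paper's citation is shorter and sits in the general stable-range framework. However, it hides that over a local ring the statement amounts to both sides being trivial.
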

\begin{proof}
    This is a consequence of the isomorphism $\displaystyle\frac{\GL_n(A, I)}{\Ee_n(I)} \simeq \Kk_1(A, I)$, which follows from \cite[Proposition 3.4, Chap. V]{bass1968} and \cite[Theorem 4.2, Chap. V]{bass1968}. For further details, we refer the reader to \cite[\S3 and \S4, Chap. V]{bass1968}.
\end{proof}

%\begin{proof}
%    From \cite[Proposition 3.4, Chap. V]{bass1968}, $A$ satisfies $\rm{SR}_2(A, I)$ and $\rm{SR}_2'(A, I)$. Then, \cite[Theorem 4.2, Chap. V]{bass1968} says that these two conditions yield
%    \[ \frac{\GL_n(A, I)}{\Ee_n(I)} \simeq \Kk_1(A, I),\]
%    for any $n \geq 2$. Here, $\Kk_1(A, I)$ is defined as $\GL(A, I)/\Ee(I)$, where $\GL(A, I) := \operatorname{colim}\GL_n(A, I)$. From this, it is clear that
%    \[\frac{\Gamma^{(n)}(A, I)}{\Ee_n(I)} \simeq \SK_1(A, I).\]
%\end{proof}

\begin{lem} \label{E_2}
    Let $A$ be a local ring. Then, for any $k \geq 0$,
    \[
    \Gamma(A, \m_A^k)=\Ee_2(\m_A^k). 
    \]
\end{lem}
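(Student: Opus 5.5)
By Lemma \ref{quot} with $n=2$ and $I=\m_A^k$, we have $\Gamma(A,\m_A^k)/\Ee_2(\m_A^k)\simeq \SK_1(A,\m_A^k)$. So it suffices to show that $\SK_1(A,I)$ is trivial for every ideal $I$ of a local ring $A$. The case $k=0$ is included: there $I=A$, and the statement says $\SL_2(A)=\Ee_2(A)$. This also follows directly from Lemma \ref{GE1}, since a local ring is a $\GE_2$-ring.

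To see that $\SK_1(A,I)=1$ I would use the stable description $\SK_1(A,I)=\SL(A,I)/\Ee(I)$. For a local ring, $\Kk_1(A,I)\simeq (1+I)^\times$ via the determinant. This is the relative version of the Whitehead determinant for local rings, which follows from the same results of Bass cited in Lemma \ref{quot} (stable range $1$ for local rings). Then $\SK_1(A,I)$, the kernel of $\det$ on $\Kk_1(A,I)$, is trivial.

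To keep things concrete, I would also give a direct argument in $\SL_2$. Take $g=\begin{pmatrix} a&b\\ c&d\end{pmatrix}\in\Gamma(A,I)$ with $I\neq A$. Then $a\in 1+I\subseteq 1+\m_A$, so $a$ is a unit. Row reduction by $\Ee_{21}(-ca^{-1})$ and $\Ee_{12}(-ba^{-1})$, whose entries lie in $I$, reduces $g$ to a diagonal matrix $D(a)$ with $a\in 1+I$. It then remains to show $D(u)\in\Ee_2(I)$ for $u\in 1+I$. For this I would use the Whitehead-type identity writing $D(u)$ as a product of conjugates of elementary matrices with entries in $I$. Concretely, for $u=1+x$ with $x\in I$, the matrix $D(u)$ equals a product of $\Ee_{12}(\ast)$ and $\Ee_{21}(\ast)$ with entries in $I$, conjugated by elements of $\Ee_2(A)$, and $\Ee_2(I)$ is by definition normal in $\SL_2(A)$.

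The main obstacle is this last step: expressing $D(1+x)$ inside the normal closure $\Ee_2(I)$ in $\SL_2$ rather than stably. I would rely on Lemma \ref{quot}, which is already stated for $n=2$, so that only the vanishing of $\SK_1(A,I)$ for local $A$ needs to be justified, through $\Kk_1(A,I)\simeq(1+I)^\times$.
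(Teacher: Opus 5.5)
Your proposal is correct and follows the paper's route. Lemma~\ref{quot} reduces the claim to $\SK_1(A,\m_A^k)=1$. The paper obtains this by citing the vanishing of $\SK_1(A,I)$ for $I$ in the Jacobson radical, and you obtain it from $\Kk_1(A,I)\simeq(1+I)^\times$ via stable range $1$. Your separate use of Lemma~\ref{GE1} for $k=0$ is slightly more careful than the paper, since $I=A$ is not in the Jacobson radical.

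The step you flagged in your direct $\SL_2$ argument is not a real obstacle. For $x\in I$,
\[
D(1+x)=\bigl(\Ee_{12}(1)\Ee_{21}(x)\Ee_{12}(-1)\bigr)\,\Ee_{12}\bigl(x(1+x)^{-1}\bigr)\,\Ee_{21}\bigl(-x(1+x)\bigr),
\]
a conjugate of $\Ee_{21}(x)$ times elementary matrices with entries in $I$. Hence $D(1+x)\in\Ee_2(I)$, and that elementary argument is complete on its own.
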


\begin{proof}
    Let $k \geq 0$. Lemma \ref{quot} tells us that
    \[\frac{\Gg(A, \m_A^k)}{\Ee_2(\m_A^k)} \simeq \SK_1(A, \m_A^k).\]
    If $A$ is a commutative ring and $I$ is an ideal contained in the Jacobson radical of $A$, then $\SK_1(A, I) \simeq 1$ \cite[Lemma 2.4]{wei2013}, from which the result follows.
    %Given $x \in \m_A^k$, we have $1 + x \notin \m_A$. Since $A$ is local, $1 + x$ is a unit. in a commutative ring, the condition $x \in I\implies 1 + x \in A^{\times}$ yields $\SK_1(A, I) \simeq 1$.
\end{proof}

\begin{prp} \label{m2-trivial}
    Let $A$ be a local ring. If $\m_A^2$ is trivial, then $\Gamma(A, \m_A)$ is abelian.
\end{prp}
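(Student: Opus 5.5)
The plan is a direct computation: when $\m_A^2=0$, every element of $\Gamma(A,\m_A)$ has the form $I+X$ with $X$ a matrix over $\m_A$, and products of such matrices are controlled by the vanishing of $\m_A^2$. This makes the group visibly abelian, with no need for the earlier structural lemmas.

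First I would write an arbitrary element $g\in\Gamma(A,\m_A)$ as $g = I + X$, where $X\in M_2(\m_A)$. For two elements $g=I+X$ and $h=I+Y$, every entry of $XY$ and of $YX$ is a sum of products of two elements of $\m_A$, hence lies in $\m_A^2=0$. Therefore
\[
gh = I + X + Y + XY = I+X+Y = I + Y + X + YX = hg .
\]
This already proves that $\Gamma(A,\m_A)$ is abelian. It also shows that $I+X\mapsto X$ is an injective homomorphism into the additive group $M_2(\m_A)$.

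To make the structure explicit, I would describe the image. We have $\det(I+X)=1+\operatorname{tr}(X)+\det X = 1+\operatorname{tr}(X)$, since $\det X\in\m_A^2=0$. So the condition $\det=1$ says exactly that $\operatorname{tr}X=0$. Hence $\Gamma(A,\m_A)\simeq \mathfrak{sl}_2(\m_A)$ as additive groups, which is consistent with the Lie-algebra description announced in the introduction.

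I do not expect any real obstacle. The only point to check carefully is that the inverse of $I+X$ is $I-X$, so that the map is a group homomorphism; this follows because $(I+X)(I-X)=I-X^2=I$. Alternatively, one could argue via Lemma~\ref{E_2}: $\Gamma(A,\m_A)=\Ee_2(\m_A)$ is generated by conjugates of the elementary matrices $\Ee_{ij}(a)$ with $a\in\m_A$, and one would check that these commute. The direct matrix argument above is cleaner, and I would use it.
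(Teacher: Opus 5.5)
Your argument is correct, and it takes a different, more elementary route than the paper. The paper uses the alternative you mention at the end. It invokes Lemma~\ref{E_2} to get $\Gamma(A,\m_A)=\Ee_2(\m_A)$, so the group is generated by the conjugates $M\Ee_{12}(x)M^{-1}$ and $M\Ee_{21}(y)M^{-1}$ with $x,y\in\m_A$. It then checks by explicit matrix multiplication that any two such generators commute, using $\m_A^2=0$. That route depends on the identification $\Gamma(A,\m_A^k)=\Ee_2(\m_A^k)$, which rests on Bass's stability results and on the vanishing of $\SK_1(A,I)$ for $I$ in the Jacobson radical; this is exactly where locality of $A$ is used. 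Your computation $(I+X)(I+Y)=I+X+Y$ uses only the definition of $\Gamma(A,\m_A)$ and the hypothesis $\m_A^2=0$. It therefore shows more generally that the principal congruence subgroup of level $I$ is abelian for any commutative ring $A$ and any ideal $I$ with $I^2=0$, with no locality assumption. It also gives the explicit isomorphism $\Gamma(A,\m_A)\simeq\mathfrak{sl}_2(\m_A)$ at no extra cost. That is precisely the computation behind the maps $\Phi$ and $\Psi$ in the proof of Theorem~\ref{H1(Gammaloc)}, applied there to $A/\m_A^2$. The paper's generator-based proof buys nothing additional here beyond consistency with the elementary-matrix style of Lemma~\ref{gamcom}; yours is shorter and self-contained. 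One minor point: the homomorphism property of $I+X\mapsto X$ already follows from the product formula, and the inverse $I-X$ is a consequence of it rather than something to check separately.
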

\begin{proof}
    By Lemma \ref{E_2}, $\Gamma(A, \m_A)=\Ee_2(\m_A)$, and we know that $\Ee_2(\m_A)$ is generated by all conjugates of $\Ee_{12}(x)$ and $\Ee_{21}(y)$, with $x, y \in \m_A$. Thus, it suffices to show that any two generators commute. Let
    \[\rm M =\begin{pmatrix}
        a & b \\ c & d
    \end{pmatrix}, \rm N = \begin{pmatrix}
        e & f \\ g & h
    \end{pmatrix} \in \SL_2(A).\]
    Since $\m_A^2$ is trivial and
    \[ \begin{aligned}
        \rm ME_{12}(x)M^{-1} = \begin{pmatrix}
            1 - acx & a^2x\\ -c^2 x & 1 + acx
        \end{pmatrix}, \ \  
        \rm NE_{12}(y)N^{-1} = \begin{pmatrix}
            1 - egy & e^2y\\
            -g^2y & 1 + egy
        \end{pmatrix}
    \end{aligned},\]
    it is straightforward to see that 
    \[\rm ME_{12}(x)M^{-1}NE_{12}(y)N^{-1} = NE_{12}(y)N^{-1}ME_{12}(x)M^{-1}.\]
    %as both sides are equal to
    %\[\begin{pmatrix}
    %    1 - acx -egy & a^2x + e^2y\\ -c^2x -g^2y & 1 + acx + egy
    %\end{pmatrix}.\]
    From similar routine computations, we see that 
    \[\rm ME_{21}(x)M^{-1}NE_{21}(y)N^{-1} = NE_{21}(y)N^{-1}ME_{21}(x)M^{-1}\]
    and
    \[\rm ME_{12}(x)M^{-1}NE_{21}(y)N^{-1} = NE_{21}(y)N^{-1}ME_{12}(x)M^{-1}.\]
\end{proof}

The following lemma will be fundamental for all results of this section. 

\begin{lem}
\label{gamcom}
Let $A$ be a local ring. If $\char(A/\m_A) \neq 2$, then
    \[
        \Gamma(A, \m_A^2) \subseteq [\Gamma(A, \m_A), \Gamma(A, \m_A)].
    \]
\end{lem}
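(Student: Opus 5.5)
By Lemma \ref{E_2} with $k=2$, $\Gamma(A,\m_A^2)=\Ee_2(\m_A^2)$. This is the normal closure in $\SL_2(A)$ of the matrices $\Ee_{12}(z)$ and $\Ee_{21}(z)$ with $z\in\m_A^2$. The commutator subgroup $C:=[\Gamma(A,\m_A),\Gamma(A,\m_A)]$ is characteristic in the normal subgroup $\Gamma(A,\m_A)$, so it is normal in $\SL_2(A)$. It therefore suffices to show $\Ee_{12}(z),\Ee_{21}(z)\in C$ for every $z\in\m_A^2$. Since $\Ee_{12}$ is additive and $\m_A^2$ is additively generated by products $xy$ with $x,y\in\m_A$, we only need $\Ee_{12}(xy)\in C$ and $\Ee_{21}(xy)\in C$ for $x,y\in\m_A$. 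The case $\Ee_{21}$ follows from $\Ee_{12}$ by conjugating with $\begin{pmatrix}0&1\\-1&0\end{pmatrix}\in\SL_2(A)$, which preserves $C$.

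The key step uses Lemma \ref{elem}(i) with $u=1+t$, $t\in\m_A$. Then $D(u)\in\Gamma(A,\m_A)$, and $u$ is a unit because $A$ is local. For $a\in\m_A$, Lemma \ref{elem}(i) gives
\[
[\Ee_{12}(-a),D(1+t)]=\Ee_{12}\bigl(a(2t+t^2)\bigr)=\Ee_{12}\bigl(a\,t(2+t)\bigr)\in C .
\]
Because $\char(A/\m_A)\neq 2$, the element $2$ is a unit, and so is $2+t$ for $t\in\m_A$. Now, given $x,y\in\m_A$, take $t=x$ and $a=y(2+x)^{-1}\in\m_A$. This yields $\Ee_{12}(xy)\in C$, which finishes the argument. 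Lemma \ref{elem}(ii) gives the $\Ee_{21}$ case directly as well, so the conjugation by the Weyl element is not even needed.

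The main obstacle is mostly bookkeeping. First, I must make sure the commutator really lies in $C$, i.e.\ that both $\Ee_{12}(-a)$ and $D(1+t)$ lie in $\Gamma(A,\m_A)$; they do, since $a,t\in\m_A$. Second, I must check that normality of $C$ in $\SL_2(A)$ is legitimate, so that passing from the generators to the normal closure $\Ee_2(\m_A^2)$ is valid. The characteristic-subgroup argument handles this. The hypothesis on the characteristic is used only to invert $2+t$. Without it one would only obtain $\Ee_{12}(\m_A\cdot(2\m_A+\m_A^2))$, which can be strictly smaller than $\Ee_{12}(\m_A^2)$.
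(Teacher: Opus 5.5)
Your proof is correct and is essentially the paper's argument. Both reduce, via Lemma~\ref{E_2} and normality of the commutator subgroup, to showing $\Ee_{12}(xy)$ lies in it for $x,y\in\m_A$, and then write this as $[\Ee_{12}(-y(2+x)^{-1}),D(1+x)]$ using Lemma~\ref{elem} and the invertibility of $2+x$. You also justify normality (the commutator subgroup is characteristic in a normal subgroup) and the $\Ee_{21}$ case (Weyl-element conjugation or Lemma~\ref{elem}(ii)), which the paper leaves implicit.
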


\begin{proof}
    Again, by Lemma \ref{E_2}, $\Gamma(A, \m_A^2)=\Ee_2(\m_A^2)$, which is generated by all conjugates of $\Ee_{12}(x)$ and $\Ee_{21}(y)$, with $x, y \in \m_A^2$. Since $[\Gamma(A, \m_A), \Gamma(A, \m_A)]$ is normal in $\SL_2(A)$ and $\Ee_{21}(y)$ is a conjugate of $\Ee_{12}(-y)$, it suffices to show that
    \[ \Ee_{12}(x) \in [\Gamma(A, \m_A), \Gamma(A, \m_A)], \text{ for all } x \in \m_A^2.\]
    Any element of $\m_A^2$ is of the form $x = \sum_{i=1}^n a_ib_i$, with $a_i, b_i \in \m_A$.
    Since
    \[ \Ee_{12}(x) = \prod_{i=1}^n \Ee_{12}(a_ib_i),\]
    we just have to prove that each $\Ee_{12}(a_i, b_i)$ lies in the commutator. By Lemma \ref{elem},
    \[ [\Ee_{12}(b_i), D(1+a_i)] = \Ee_{12}(-b_i(a_i^2 +2a_i)) = \Ee_{12}(-a_ib_i(a_i + 2)).\]
    Using the fact that $a_i + 2 \in A^{\times}$,
    \[[\Ee_{12}(-b_i(a_i +2)^{-1}), D(1+a_i)] = \Ee_{12}(a_ib_i).\]
    % the case 2 \in \m_A is still in the backup file
\end{proof}

%\begin{rem}
%    $2 \in \m_A$ if, and only if, $\char(A/\m_A) = 2$. 
%\end{rem}

\begin{rem}
    The case of $\char(A/\m_A) = 2$ is more complex, even if $\m_A$ is principal. Let $\m_A = (a)$. If $2 = ca$, with $c \in \m_A$, then we can ensure that $\Gamma(A, \m_A^3)$ is contained in the commutator: for any $a^3b \in \m_A^3$,
    \[ E_{12}(a^3b) = [D(1 +a), E_{12}(ab(1+c)^{-1})]. \]
    If $2 = ua$, where $u \in A^{\times}$, we obtain $\Gg(A, \m_A^4) \subseteq [\Gg(A, \m_A), \Gg(A, \m_A)$ by checking that
    \[ E_{12}(16b) = [D(1 + 2), E_{12}(2b)],\]
    for every $16b \in \m_A^4$.

    However, this information is not necessarily helpful, since $\m_A^3$ or $\m_A^4$ might be trivial. For this reason, it would be necessary to split the discussion in cases, based on the nilpotence degree and on whether $2 \in \m_A \setminus \m_A^2$, making it hard to see the general picture. This is not a problem for $\char(A/\m_A) \neq 2$: Proposition \ref{m2-trivial} shows that $\Gg(A, \m_A)$ is already abelian when $\m_A^2 = (0)$.
\end{rem}

\begin{lem}
    \label{Gg_0abcond}
    Let $A$ be a local ring with maximal ideal $\m_A$. If $|A/\m_A| \geq 4$, then
    %\[\Gg_0(A, \m_A)^\ab \simeq \Gg_0(A/\m_A^2, \m_A/\m_A^2). \]
    \[\Gg(A, \m_A) \subseteq [\Gamma_1(A, \m_A), \Gamma_1(A, \m_A)] \subseteq [\Gamma_0(A, \m_A), \Gamma_0(A, \m_A)].\]
\end{lem}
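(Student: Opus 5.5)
The second inclusion is immediate, since $\Gg_1(A,\m_A)\subseteq\Gg_0(A,\m_A)$ and commutator subgroups are monotone. So the work is the first inclusion, $\Gg(A,\m_A)\subseteq[\Gg_1(A,\m_A),\Gg_1(A,\m_A)]$. By Lemma \ref{E_2} with $k=1$, $\Gg(A,\m_A)=\Ee_2(\m_A)$. This group is generated by the conjugates (in $\SL_2(A)$) of $\Ee_{12}(x)$ and $\Ee_{21}(y)$ with $x,y\in\m_A$.

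The commutator subgroup $[\Gg_1,\Gg_1]$ is only normal in $\Gg_0(A,\m_A)$, not obviously in $\SL_2(A)$. I will therefore avoid arbitrary conjugates. Instead I will use a generating set that only requires conjugation inside $\Gg_1$ and $\Gg_0$. I expect to show that $\Gg(A,\m_A)$ is generated by three families:
\begin{itemize}
\item the matrices $\Ee_{12}(x)$ with $x\in\m_A$;
\item the matrices $\Ee_{21}(y)$ with $y\in\m_A$;
\item the diagonal matrices $D(1+z)$ with $z\in\m_A$.
\end{itemize}
To see this, take $M=\begin{pmatrix}a&b\\c&d\end{pmatrix}\in\Gg(A,\m_A)$. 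The entry $a$ is a unit. Multiplying $M$ by $\Ee_{21}(-ca^{-1})$ and $\Ee_{12}(-a^{-1}b)$, both of which have entries in $\m_A$, reduces $M$ to $D(a)$ with $a\in 1+\m_A$. This gives a Gauss decomposition $M=\Ee_{21}(\ast)D(a)\Ee_{12}(\ast)$. It then suffices to put each of the three types into $[\Gg_1,\Gg_1]$.

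For the upper unipotents I use Lemma \ref{elem}(i). For $u\in A^\times$ we have $D(u)\in\Gg_0$, and $[\Ee_{12}(-a),D(u)]=\Ee_{12}(a(u^2-1))$. This only gives membership in $[\Gg_0,\Gg_0]$, because $D(u)\notin\Gg_1$ when $u\not\equiv1$. I will fix this by replacing $D(u)$ with an element of $\Gg_1$ that has the same effect modulo the right terms. For instance, take $g=\Ee_{21}(c)\,\Ee_{12}(t)$-type products with $c\in\m_A$, and compute $[\Ee_{12}(s),\ \Ee_{21}(c)]$ for $s\in A$, $c\in\m_A$. Both factors lie in $\Gg_1$, since $\Gg_1$ contains $\Ee_{12}(s)$ for all $s\in A$. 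The resulting commutator is a matrix in $\Gg(A,\m_A)$ whose diagonal part is $1+sc$ up to higher-order corrections. Combining such commutators with the identity $\Ee_{12}(s)\Ee_{21}(c)\Ee_{12}(-s)$ should produce $D(1+z)$ times unipotents. The key is the standard relation
\[\Ee_{12}(s)\,\Ee_{21}(c)\,\Ee_{12}(-s)\,\Ee_{21}(-c)\in[\Gg_1,\Gg_1],\]
together with conjugation by $\Ee_{12}(t)$, $t\in A$, which is allowed since $\Ee_{12}(t)\in\Gg_1$. Conjugating $\Ee_{21}(y)$ by $\Ee_{12}(t)$ and taking quotients yields, modulo $[\Gg_1,\Gg_1]$, that $\Ee_{21}(y)$ commutes with the upper unipotents. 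Comparing the entries then forces the relations $D(1+ty)\equiv1$ and $\Ee_{12}(t^2y)\equiv1$ in $\Gg_1^{\ab}$.

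The main obstacle is getting the upper unipotents with arbitrary $x\in\m_A$ from this, which is where $|A/\m_A|\ge4$ enters. Here I will use the commutator $[\Ee_{12}(-x),D(1+z)]=\Ee_{12}(x((1+z)^2-1))$ with $z\in\m_A$. On its own this only reaches $\m_A^2$, and that case is handled by Lemma \ref{gamcom}-style arguments. For all of $\m_A$ I expect instead to use the relation $\Ee_{12}(t^2y)\equiv1$ for all $t\in A$. Since $\Ee_{12}$ is additive, $\Ee_{12}(((t+1)^2-t^2-1)y)=\Ee_{12}(2ty)$ is trivial. When the characteristic is $2$, I will instead use $t^2-t$ ranging over enough values. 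Because $|A/\m_A|\ge4$, there exists $t\in A^\times$ with $t^2-1\in A^\times$ (and $t^2-t$ a unit in characteristic $2$), which gives $\Ee_{12}(x)\equiv1$ for all $x\in\m_A$. The same argument by symmetry, conjugating with $\Ee_{21}$-type elements of $\Gg_1$ and $D(u)$-twists, handles $\Ee_{21}(y)$. The diagonal generators then follow from $D(1+ty)\equiv1$, taking $t=1$.
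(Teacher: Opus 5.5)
Your argument breaks down at the step ``the same argument by symmetry \dots\ handles $\Ee_{21}(y)$''. This step cannot be repaired, because the first inclusion of the statement is false whenever $\m_A\neq\m_A^2$.

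There is no upper/lower symmetry in $\Gg_1(A,\m_A)$. It contains $\Ee_{12}(t)$ for every $t\in A$, but $\Ee_{21}(c)$ only for $c\in\m_A$. As you noticed yourself, it also contains $D(u)$ only for $u\equiv 1 \pmod{\m_A}$. The obstruction is the map
\[
\phi:\Gg_1(A,\m_A)\rightarrow \m_A/\m_A^2,\qquad \begin{pmatrix} a & b\\ c & d\end{pmatrix}\mapsto c+\m_A^2 .
\]
It is a homomorphism: the lower-left entry of a product is $ca'+dc'\equiv c+c' \pmod{\m_A^2}$, since $a'-1,\ d-1,\ c,\ c'\in\m_A$. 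Hence $\phi$ vanishes on $[\Gg_1(A,\m_A),\Gg_1(A,\m_A)]$, while $\phi(\Ee_{21}(y))=y+\m_A^2\neq 0$ for $y\in\m_A\setminus\m_A^2$. Equivalently, conjugation by elements of $\Gg_1(A,\m_A)$ never changes the lower-left entry of an element of $\Gg(A,\m_A)$ modulo $\m_A^2$, so no commutator identity can produce such $\Ee_{21}(y)$. Concretely, for $A=\z/25$ we have $|A/\m_A|=5$, yet $\Ee_{21}(5)\in\Gg(A,\m_A)$ does not lie in $[\Gg_1(A,\m_A),\Gg_1(A,\m_A)]$.

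The paper's own proof has exactly the defect you flagged at the start. It writes $\Ee_{12}(a)$ and $\Ee_{21}(b)$ as commutators with $D(u)$ (printed as $D(u^2-1)$), where $u^2-1\in A^\times$ forces $u\not\equiv 1$, so $D(u)\in\Gg_0(A,\m_A)\setminus\Gg_1(A,\m_A)$. Together with the identity expressing $D(1+a)$ through elementary matrices, that argument correctly proves $\Gg(A,\m_A)\subseteq[\Gg_0(A,\m_A),\Gg_0(A,\m_A)]$. This is the part of the statement that is true; the inclusion into $[\Gg_1,\Gg_1]$ is not established.

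Your conjugation relations do establish something, provided $\char(A/\m_A)\neq 2$, so that Lemma \ref{gamcom} puts $\Gg(A,\m_A^2)$ inside $[\Gg_1,\Gg_1]$. Under that assumption they show that $\Ee_{12}(x)$ and $D(1+z)$, for $x,z\in\m_A$, lie in $[\Gg_1,\Gg_1]$. Modulo $\Gg(A,\m_A^2)$, the commutator $[\Ee_{12}(t),\Ee_{21}(y)]$ corresponds to $\begin{pmatrix} ty & -t^2y\\ 0 & -ty\end{pmatrix}\in\Sl_2(\m_A/\m_A^2)$. Combining $t$ with $-t$ and using $2\in A^\times$ separates the two pieces; comparing entries alone gives only the combined relation $D(1+ty)\equiv \Ee_{12}(t^2y)$.

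Consequently, when $\char(A/\m_A)\neq 2$, $[\Gg_1(A,\m_A),\Gg_1(A,\m_A)]$ is exactly the set of matrices in $\Gg(A,\m_A)$ with lower-left entry in $\m_A^2$. Via $(\mu,\phi)$ this gives $\Gg_1(A,\m_A)^\ab\simeq A/\m_A\oplus\m_A/\m_A^2$. This also affects Proposition \ref{H1GAM1loc}(i), which relies on the lemma.
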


\begin{proof}
    First note that any matrix in $\Gamma(A, \m_A)$ can be decomposed as
    \[ \begin{pmatrix}
        1 + a & b\\ c &  1 +d 
    \end{pmatrix} = E_{21}(c(1+a)^{-1})D(1+a)E_{12}(b(1+a)^{-1}),\]
    where $a, b, c, d \in \m_A$. Hence, it suffices to prove that elementary matrices and diagonal matrices are contained in the commutator of $\Gamma_1(A, \m_A)$.

    If $|A/\m_A| \geq 4$, then there is a unit $u\in A$ such that $u^2-1$ is also a unit. To see this, note that the equation $\overline{u}^2 -\overline{1} = \overline{0}$ has at most two solutions in the field $A/\m_A$. Now, given $a \in A$ and $ b\in \m_A$, we may write them as: 
    \begin{align*}
        &a=a'(u^2-1), \text{ and } \\
        &b=b'(u^2-1),
    \end{align*}
    with $a' \in A$, $b' \in \m_A$. By Lemma \ref{elem},
    \[ \begin{aligned}
        &E_{12}(a) = [D(u^2 -1), E_{12}(-a')],\\
        &E_{21}(b) = [D(u^2 -1), E_{21}(-b')].
    \end{aligned}\]

    Finally, one can check that
    \[ D(1 + a) = E_{12}(1)E_{21}(a)E_{12}(-1(1 + a)^{-1})E_{21}(-a(1+a)).\]
    %Therefore, by Lemma \ref{elem}, we have $E_{12}(a), E_{21}(b) \in [\Gg_1(A, \m_A), \Gg_1(A, \m_A)]$.
    %Now, by Theorem \ref{VL} we get that $\Gg(A, \m_A) \subseteq [\Gg_0(A, \m_A), \Gg_0(A, \m_A)]$, in particular, $\Gg(A, \m_A^2) \subseteq \Gg_0(A, \m_A)$, hence, the result follows from Corollary \ref{quotient}. 
\end{proof}

As illustrated by the following results, our technique passes through simplifying the abelianization of certain subgroups of $\SL_2$ by quotienting principal congruence subgroups. For instance, if $\hat{\Gg}(A, I)$ denotes any of the groups $\Gg(A, I)$, $\Gg_0(A, I)$, or $\Gg_1(A, I)$, we can characterize the quotients $\hat{\Gg}(A, I)/\Gg(A, J)$ via matrices with entries in $A/J$. 

%\begin{equation}
%\label{gamm_i}
%    \Gg_i(A, I) \in \{\Gg(A, I), \Gg_1(A, I), \Gg_0(A, I)\}.
%\end{equation}

\begin{lem}
    \label{gamma_i/gamma}
    Let $A$ be a commutative ring and let $I,J$ be ideals of $A$ with $J \subseteq I$. If $A/J$ is a $\GE_2$-ring, then 
    \[
    \hat{\Gg}(A, I)/\Gg(A, J) \simeq \hat{\Gg}(A/J, I/J).  
    \]
\end{lem}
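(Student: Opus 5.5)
The plan is to realize the isomorphism as the one induced by reduction modulo $J$. Let $\pi: A \arr A/J$ be the projection and $\pi_*: \SL_2(A) \arr \SL_2(A/J)$ the induced homomorphism. The kernel of $\pi_*$ is $\Gg(A,J)$. Since $J \subseteq I$, every matrix congruent to the identity mod $J$ is also congruent to it mod $I$, so $\Gg(A,J) \subseteq \Gg(A,I) \subseteq \Gg_1(A,I) \subseteq \Gg_0(A,I)$. Hence $\Gg(A,J)$ is the kernel of the restriction of $\pi_*$ to $\hat{\Gg}(A,I)$.

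Next we check that $\pi_*$ maps $\hat{\Gg}(A,I)$ into $\hat{\Gg}(A/J,I/J)$. The defining conditions are entry conditions: some entries lie in $I$, or some entries minus $1$ lie in $I$. These conditions pass to $A/J$ with $I/J$ in place of $I$. So the restriction gives an injective homomorphism
\[
\hat{\Gg}(A,I)/\Gg(A,J) \hookrightarrow \hat{\Gg}(A/J,I/J).
\]

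The main step is surjectivity, and here the $\GE_2$ hypothesis enters through Lemma \ref{surj-GE2}. Since $A/J$ is a $\GE_2$-ring, $\pi_*:\SL_2(A)\arr\SL_2(A/J)$ is surjective. Take $\overline{M} \in \hat{\Gg}(A/J,I/J)$ and choose any lift $M \in \SL_2(A)$ with $\pi_*(M)=\overline{M}$. Every condition defining $\hat{\Gg}(A/J,I/J)$ says that some element $\overline{x}$ of $A/J$ lies in $I/J$. The preimage of $I/J$ in $A$ is exactly $I$, because $J\subseteq I$. So $\overline{x}\in I/J$ forces $x\in I$, and therefore $M \in \hat{\Gg}(A,I)$. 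This gives surjectivity, and the isomorphism follows from the first isomorphism theorem.

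I do not expect a real obstacle. The only subtle point is that a lift must be chosen inside $\SL_2(A)$, not just in $M_2(A)$, and this is precisely what Lemma \ref{surj-GE2} provides. The argument is uniform in the three cases: for $\Gg_1$ one checks the conditions $a-1,d-1,c\in I$, and for $\Gg_0$ only $c \in I$, and in each case the check is the preimage argument above.
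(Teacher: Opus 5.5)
Your proposal is correct and matches the paper's proof: surjectivity of $\pi_*$ from Lemma \ref{surj-GE2}, the fact that any lift of an element of $\hat{\Gg}(A/J,I/J)$ lies in $\hat{\Gg}(A,I)$ because $J\subseteq I$, and the identification of the kernel with $\Gg(A,J)$. You are slightly more precise than the paper, which records only the inclusion $\Gg(A,J)\subseteq \hat{\Gg}(A,I)\cap\ker(\pi_*')$, whereas the first isomorphism theorem needs the equality you state.
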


\begin{proof}
     Since $A/J$ is a $\GE_2$-ring, Proposition \ref{surj-GE2} gives that $\pi_*: \SL_2(A) \rightarrow \SL_2(A/J)$ is surjective. Now, consider the following commutative diagram: 
     \[
    \begin{tikzcd}
\hat{\Gg}(A,I) \arrow[r,   "\pi_*'"] \arrow[d, hook,  "i_2"'] & \hat{\Gg}(A/ J, I/J) \arrow[d, hook, "i_1"] \\
\SL_2(A) \arrow[r, two heads, "\pi_*"'] & \SL_2(A/J),
\end{tikzcd}
\]
    where $\pi_*'=\pi_\ast|_{\hat{\Gg}(A, I)}$. If $X \in \hat{\Gg}(A/J, I/J) =\im(i_1)$, then $\pi_*^{-1}(X) \in \hat{\Gg}(A, I) = \im(i_2)$, hence $\pi_*'$ is surjective. Furthermore, $\Gg(A, J) \subseteq \hat{\Gg}(A, I) \cap \ker(\pi_\ast')$, which implies that: 
    \[
    \hat{\Gg}(A, I)/\Gg(A, J) \simeq \hat{\Gg}(A/J, I/J). 
    \]
\end{proof}

\begin{cor}
\label{quotient}
   Let $A$ be a local ring. Then, for all $k \geq 1$,
   \[  \hat{\Gg}(A, \m_A)/\Gg(A, \m_A^k)\simeq \hat{\Gg}(A/\m_A^k, \m_A/\m_A^k).\]
\end{cor}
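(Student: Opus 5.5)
The corollary will follow directly from Lemma \ref{gamma_i/gamma}, applied with $I = \m_A$ and $J = \m_A^k$. Since $k \geq 1$, we have $J = \m_A^k \subseteq \m_A = I$, so the inclusion hypothesis holds. The only point to check is that $A/\m_A^k$ is a $\GE_2$-ring.

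For this, observe that $A/\m_A^k$ is a nonzero commutative ring. Its ideals correspond to the ideals of $A$ containing $\m_A^k$. Every such proper ideal is contained in $\m_A$, so $\m_A/\m_A^k$ is the unique maximal ideal of $A/\m_A^k$. In the case $k=1$ the quotient is the field $A/\m_A$, which is still local. Hence $A/\m_A^k$ is local, in particular semilocal, and Lemma \ref{GE1} shows it is a $\GE_2$-ring.

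With the hypotheses verified, Lemma \ref{gamma_i/gamma} gives
\[\hat{\Gg}(A,\m_A)/\Gg(A,\m_A^k) \simeq \hat{\Gg}(A/\m_A^k, \m_A/\m_A^k)\]
for each of the three choices of $\hat{\Gg}$.

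There is no real obstacle here; the only step needing care is the identification of $A/\m_A^k$ as a local ring. If one wants to be fully careful about the isomorphism itself, one should also confirm that the kernel of $\pi_*'$ is exactly $\Gg(A,\m_A^k)$, and not merely contains it. Indeed, $\ker \pi_*' = \hat{\Gg}(A,\m_A)\cap \ker\pi_* = \hat{\Gg}(A,\m_A)\cap \Gg(A,\m_A^k) = \Gg(A,\m_A^k)$, because $\Gg(A,\m_A^k) \subseteq \Gg(A,\m_A) \subseteq \hat{\Gg}(A,\m_A)$. This is the content of the lemma, so nothing beyond it is needed.
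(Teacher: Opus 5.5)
Your proposal is correct and is exactly the intended argument. The paper gives no separate proof: it treats the corollary as the special case $I=\m_A$, $J=\m_A^k$ of Lemma \ref{gamma_i/gamma}, with $A/\m_A^k$ local and hence $\GE_2$ by Lemma \ref{GE1}. Your extra check that $\ker\pi_*'$ equals $\Gg(A,\m_A^k)$ is a worthwhile tightening, since the lemma's proof only records containment.
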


%When we are studying the abelianization of the groups $\Gg_i(A/\m_A^k, \m_A/\m_A^k)$ there is some stability as we can see in next proposition: 

\begin{lem}
\label{h1ofcong}
    Let $A$ be a commutative ring and let $H$ be a subgroup of $\SL_2(A)$. If $J$ is an ideal of $A$ such that $\Gg(A, J) \subseteq [H,H]$, then
    \[H^{\ab} \simeq (H/\Gamma(A, J))^{\ab}. \] 
\end{lem}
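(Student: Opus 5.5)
The claim is a general group-theoretic fact: if $N$ is a normal subgroup of $H$ with $N \subseteq [H,H]$, then $H^{\ab} \simeq (H/N)^{\ab}$. I would prove this fact and then apply it with $N = \Gg(A,J)$. First, $\Gg(A,J)$ is normal in $H$: it is the kernel of $\pi_*\colon \SL_2(A) \arr \SL_2(A/J)$, so it is normal in $\SL_2(A)$. It is also contained in $H$, because $\Gg(A,J) \subseteq [H,H] \subseteq H$. Hence the quotient $H/\Gg(A,J)$ makes sense as a group.

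Let $q\colon H \arr H/N$ be the projection. The composite $H \arr H/N \arr (H/N)^{\ab}$ is surjective, so it suffices to identify its kernel with $[H,H]$. Since $q$ is surjective, $q([H,H]) = [H/N, H/N]$. Therefore the kernel of the composite is $q^{-1}([H/N,H/N]) = [H,H]\,N$. The hypothesis $N \subseteq [H,H]$ gives $[H,H]\,N = [H,H]$. The first isomorphism theorem then yields $H/[H,H] \simeq (H/N)/[H/N,H/N]$. Equivalently, one can use the third isomorphism theorem: $(H/N)^{\ab} = (H/N)/([H,H]/N) \simeq H/[H,H]$.

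There is no real obstacle here. The only points needing care are checking that $\Gg(A,J)$ is normal in $H$ and contained in it, and the equality $q([H,H]) = [H/N,H/N]$, which holds because the image of a commutator is the commutator of the images and $q$ is onto.
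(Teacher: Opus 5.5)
Your proof is correct and is essentially the paper's argument. The paper writes down the map $(H/\Gg(A,J))^{\ab} \to H^{\ab}$ coming from the hypothesis $\Gg(A,J)\subseteq[H,H]$, together with the map $H^{\ab} \to (H/\Gg(A,J))^{\ab}$ induced by the quotient, and notes that they are mutually inverse; this has the same content as your computation $q^{-1}([H/N,H/N]) = [H,H]N = [H,H]$, and your explicit check that $\Gg(A,J)$ is normal in $H$ and contained in it is a point the paper leaves implicit.
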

\begin{proof}
%{\color{red}{I edited this proof!!}}
From $\Gg(A, J) \subseteq [H,H]$, we obtain the natural map
%    \[ \begin{aligned}
$f: H/\Gg(A, J) \rightarrow H^{\ab}$.
%        M \cdot \Gg(A, J) &\mapsto M\cdot [H, H]
 %   \end{aligned}\]
   This induces a well defined group homomorphism
    \[ \begin{aligned}
        f^{\ab}: (H/\Gg(A, J))^{\ab} &\rightarrow H^{\ab}.\\
%       (M \cdot \Gg(A, J)) \cdot [H, H]&\mapsto M \cdot [H, H].
    \end{aligned}\]
    Conversely, the quotient map $g: H \rightarrow H/\Gg(A,J)$ induces
    \[ \begin{aligned}
        g^{\ab}: H^{\ab} &\rightarrow (H/\Gg(A,J))^{\ab}.\\
%       M \cdot[H, H] &\mapsto (M\cdot \Gg(A, J)) \cdot [H, H].
    \end{aligned} \]
    It is clear that $g^{\ab}$ is the inverse of $f^{\ab}$, from which the result follows.
    %Immediately from the hypothesis we get that $(H/\Gamma(A, J))^\ab\twoheadrightarrow H^{\ab}$, while the map $H \rightarrow H/\Gg(A, J)$ induces the other surjection. \textcolor{blue}{I will correct this proof}
\end{proof}

A crucial consequence is expressed in the next proposition.

\begin{prp}
\label{Stab}
      Let $A$ be a local ring such that $\char(A/\m_A) \neq 2$. Then, for all $k \geq 2$,
      \[ \hat{\Gamma}(A/\m_A^k, \m_A/\m_A^k)^{\ab} \simeq \hat{\Gamma}(A/\m_A^2, \m_A/\m_A^2)^{\ab}.\]
\end{prp}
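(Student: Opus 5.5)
The plan is to apply the earlier results to the local ring $B = A/\m_A^k$ instead of $A$. First I check that $B$ satisfies the hypotheses needed. $B$ is local with maximal ideal $\m_B = \m_A/\m_A^k$. Its residue field is $B/\m_B \simeq A/\m_A$, so $\char(B/\m_B) \neq 2$. Moreover $\m_B^2 = \m_A^2/\m_A^k$, since $k \geq 2$ gives $\m_A^k \subseteq \m_A^2$.

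Next, Lemma \ref{gamcom} applied to $B$ gives
\[\Gg(B, \m_B^2) \subseteq [\Gg(B,\m_B), \Gg(B,\m_B)].\]
Since $\Gg(B,\m_B) \subseteq \hat{\Gg}(B,\m_B)$, we get $[\Gg(B,\m_B),\Gg(B,\m_B)] \subseteq [\hat{\Gg}(B,\m_B), \hat{\Gg}(B,\m_B)]$. Hence $\Gg(B, \m_B^2)$ lies in the commutator subgroup of $H = \hat{\Gg}(B, \m_B)$. Lemma \ref{h1ofcong}, with $J = \m_B^2$, then yields
\[\hat{\Gg}(B,\m_B)^{\ab} \simeq \bigl(\hat{\Gg}(B,\m_B)/\Gg(B,\m_B^2)\bigr)^{\ab}.\]

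To identify the quotient I use Corollary \ref{quotient} for the local ring $B$ with exponent $2$:
\[\hat{\Gg}(B,\m_B)/\Gg(B,\m_B^2) \simeq \hat{\Gg}(B/\m_B^2, \m_B/\m_B^2).\]
Equivalently, this is Lemma \ref{gamma_i/gamma}, since $B/\m_B^2$ is local and hence $\GE_2$ by Lemma \ref{GE1}. It remains to note the canonical ring isomorphism $B/\m_B^2 = (A/\m_A^k)/(\m_A^2/\m_A^k) \simeq A/\m_A^2$. Under it, $\m_B/\m_B^2$ corresponds to $\m_A/\m_A^2$, and the congruence subgroups correspond entrywise. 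Composing the isomorphisms gives the statement.

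There is no real obstacle; the argument is a combination of earlier lemmas. The only care needed is to confirm that Lemma \ref{gamcom} and Corollary \ref{quotient} are stated for an arbitrary local ring, so they apply to $B$. I also need the normality of $\Gg(B,\m_B^2)$ in $\hat{\Gg}(B,\m_B)$ for the quotient to make sense. This holds because it is a kernel of reduction, hence normal in $\SL_2(B)$. For $k = 2$ the statement is trivial.
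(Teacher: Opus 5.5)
Your proof is correct and uses the same three ingredients as the paper: Lemma \ref{gamcom}, Lemma \ref{h1ofcong}, and Corollary \ref{quotient}. The difference is only in the ring you apply them to. The paper works in $A$ itself, proves $\hat{\Gg}(A,\m_A)^{\ab} \simeq \hat{\Gg}(A/\m_A^k,\m_A/\m_A^k)^{\ab}$ for every $k \geq 2$, and compares the two sides through this common group (a form it reuses later). You instead apply the lemmas to $B = A/\m_A^k$ and then identify $B/\m_B^2 \simeq A/\m_A^2$.
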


\begin{proof}
    Lemma \ref{gamcom} gives $\Gg(A, \m_A^k) \subseteq [\hat{\Gg}(A, \m_A), \hat{\Gg}(A, \m_A)]$, for all $k \geq 2$. Now, by Corollary \ref{quotient} and Lemma \ref{h1ofcong}, we have $\hat{\Gg}(A, \m_A)^\ab \simeq \hat{\Gg}(A/\m_A^k, \m_A/\m_A^k)^{\ab}$, for all $k \geq 2$.
    %Since $\Gg(A, \m_A^{k}) \subseteq \Gg(A, \m_A^2)$, for every $k \geq 2$, we also get the isomorphisms $\hat{\Gg}(A, \m_A)^\ab \simeq \hat{\Gg}(A/\m_A^k, \m_A/\m_A^k)^\ab$. 
\end{proof}

%This lemma provides an instrumental corollary, which will be extensively used in this paper. 

%\begin{cor} \textcolor{red}{!!!}
%\label{stab}
 %   Let $A$ be a local ring and let $H \leq \SL_2(A)$. If $[H,H]$ is a congruence subgroup, then there are natural numbers $n\geq k$ such that: 
 %   \[
 %   H^\ab \simeq (H/\Gg(A, \m_A^k))^\ab \simeq (H/\Gg(A, \m_A^{n}))^{ab}.
 %   \] 
%\end{cor}

%\begin{proof}
 %   By Lemma \ref{h1ofcong}, there is a natural number $k$ such that: 
  %  \[
  %  H^\ab \simeq (H/ \Gg(A, \m_A^k))^\ab,
  %  \]
   % where $\Gg(A, \m_A^k)\leq [H,H]$. Now, for all $n \geq k$, we have $\Gg(A, \m_A^n) \subseteq \Gg(A, \m_A^k) \subseteq [H,H]$. Thus, by the same arguments from Lemma \ref{h1ofcong}, we conclude that: 
   % \[
   % H^\ab\simeq (H/ \Gg(A, \m_A^n))^\ab.
   % \]    
%\end{proof}

\subsection{Abelianization of congruence subgroups}

With the results obtained in the previous section, we now can compute the abelianization of the congruence subgroups $\Gg_0$,  $\Gg_1$ and $\Gg$. We first start with $\Gg(A, \m_A)^\ab$ and then, using the five term exact sequence, we can compute the remaining ones. The strategy used for $\Gg(A, \m_A)^\ab$ is to relate it with the additive group of the Lie algebra $\Sl_2(\m_A/\m_A^2)$. %In fact, Theorem \ref{H1(Gammaloc)} states that these two groups are isomorphic, from which we can give a linear representation of $\Gg(A, \m_A)^{\ab}$ as: 
%\[
%\Gg(A, \m_A)^\ab \simeq \mathfrak{n}^-\oplus \mathfrak{h} \oplus \mathfrak{n}^+,
%\]
%where $\mathfrak{n}^{-}$ and $\mathfrak{n}^{+}$ denote the strictly lower and upper triangular matrices of $\mathfrak{sl}_2(\m_A/\m_A^2)$, while $\mathfrak{h}$ is formed by the diagonal matrices. In the next section, we characterize each summand of this group for the setting of Dedekind domains of arithmetic type using combinatorial arguments. 

\begin{thm}
\label{H1(Gammaloc)}
    Let $A$ be a local ring with $\char(A/\m_A) \neq 2$. Then, 
        \[
        \Gamma(A,\m_A)^\ab \simeq \mathfrak{sl}_2(\m_A/\m_A^2).
        \]
%{\color{red}{What these matrices mean? $\m_A/\m_A^2$ is a $A/\m_A$-vector space. So what $\Ee_{21}(x)$ means
%for $x \in \m_A/\m_A^2$? etc.}}
\end{thm}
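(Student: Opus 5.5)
By Proposition \ref{Stab}, taken with $k=2$ and $\hat{\Gg}=\Gg$ (its proof actually shows $\Gg(A,\m_A)^\ab \simeq \Gg(A/\m_A^k,\m_A/\m_A^k)^\ab$ for every $k\ge 2$), we have
\[
\Gg(A,\m_A)^\ab \simeq \Gg(A/\m_A^2,\m_A/\m_A^2)^\ab .
\]
Write $B=A/\m_A^2$. This is a local ring with maximal ideal $\m_B=\m_A/\m_A^2$ and $\m_B^2=0$, and its residue field is still of characteristic $\neq 2$. Proposition \ref{m2-trivial} says $\Gg(B,\m_B)$ is abelian, so its abelianization is the group itself. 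It therefore suffices to build a group isomorphism
\[
\Gg(B,\m_B)\to \Sl_2(\m_B), \qquad I+X\mapsto X .
\]

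To do this, note that every element of $\Gg(B,\m_B)$ has the form $I+X$ with $X\in M_2(\m_B)$. Multiplication is additive in $X$: $(I+X)(I+Y)=I+X+Y+XY=I+X+Y$, because every entry of $XY$ lies in $\m_B^2=0$. So the map $\phi(I+X)=X$ is an injective homomorphism into the additive group $M_2(\m_B)$. For the image, compute the determinant with $X=\begin{pmatrix} a&b\\ c&d\end{pmatrix}$:
\[
\det(I+X)=(1+a)(1+d)-bc = 1+a+d .
\]
Hence the condition $\det=1$ is exactly $\operatorname{tr}X=0$, and the image lies in $\Sl_2(\m_B)$. Conversely, any trace-zero $X$ with entries in $\m_B$ gives an element $I+X$ of determinant $1$ that reduces to $I$ modulo $\m_B$. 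So $\phi$ is a bijection onto $\Sl_2(\m_A/\m_A^2)$.

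Composing $\phi$ with the stability isomorphism proves the theorem, and the resulting isomorphism sends the class of $\begin{pmatrix}1+a&b\\ c&1+d\end{pmatrix}$ to $\begin{pmatrix}\bar a&\bar b\\ \bar c&\bar d\end{pmatrix}$. The only substantive input is Lemma \ref{gamcom}, used through Proposition \ref{Stab}; that is where $\char(A/\m_A)\ne 2$ is needed, since it guarantees $\Gg(A,\m_A^2)$ lies in the commutator subgroup. Everything after that reduction is the direct calculation above, so I do not expect a real obstacle. The one point to state carefully is that $\Gg(B,\m_B)$ is the principal congruence subgroup of $B$ itself, which is exactly the identification provided by Corollary \ref{quotient}.
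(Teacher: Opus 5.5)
Your proposal is correct and follows the paper's proof. Like the paper, you reduce to $\Gg(A/\m_A^2,\m_A/\m_A^2)^\ab$ via Lemma \ref{gamcom}, Corollary \ref{quotient} and Lemma \ref{h1ofcong} (packaged as Proposition \ref{Stab}), then identify that group with $\Sl_2(\m_A/\m_A^2)$ through $I+X\mapsto X$. Your explicit checks $(I+X)(I+Y)=I+X+Y$ and $\det(I+X)=1+\operatorname{tr}X$ supply what the paper calls clear; the appeal to Proposition \ref{m2-trivial} is harmless but redundant, since the isomorphism onto an abelian group already gives commutativity.
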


\begin{proof}
    By Lemma \ref{gamcom}, $\Gg(A, \m_A^2)$ is contained in the commutator of $\Gg(A, \m_A)$. Using Lemma \ref{h1ofcong} and Corollary \ref{quotient}, we have: 
    \[
    \Gamma(A, \m_A)^{\ab}\simeq \Gamma(A/ \m_A^2, \m_A/ \m_A^2)^{\ab}. 
    \]
    
    From the fact that the determinant is $1$, one can check that any matrix in $\Gg(A/\m_A^2, \m_A/\m_A^2)$ can be written in the form
    \[ \begin{pmatrix}
            \overline{1}+\overline{a} & \overline{b}\\
            \overline{c} & \overline{1} - \overline{a}
        \end{pmatrix}.
    \]
    Thus, we define the following group homomorphisms: 
    \begin{align*}
        \Phi: \Gg(A/\m_A^2, \m_A/\m_A^2) &\rightarrow \mathfrak{sl}_2(\m_A/\m_A^2)\\
       \begin{pmatrix}
            \overline{1}+\overline{a} & \overline{b}\\
            \overline{c} & \overline{1} -\overline{a}
        \end{pmatrix}
        & \mapsto 
        \begin{pmatrix}
            \overline{a} & \overline{b}\\
            \overline{c} & \overline{-a}
        \end{pmatrix}
    \end{align*}
    and
     \begin{align*}
        \Psi: \mathfrak{sl}_2(\m_A/\m_A^2) &\rightarrow \Gg(A/\m_A^2, \m_A/\m_A^2)\\
       \begin{pmatrix}
           \overline{a} & \overline{b}\\
            \overline{c} & -\overline{a}
        \end{pmatrix}
        & \mapsto 
        \begin{pmatrix}
            1+\overline{a} & \overline{b}\\
            \overline{c} & 1 -\overline{a}
        \end{pmatrix},
    \end{align*}
    where $\mathfrak{sl}_2(\m_A/\m_A^2)$ denotes the additive group of the Lie Algebra $\mathfrak{sl}_2(\m_A/\m_A^2)$. Clearly, $\Phi$ and $\Psi$ are mutual inverses and hence 
\[
\Gamma(A/\m_A^2, \m_A/\m_A^2)\simeq \mathfrak{sl}_2(\m_A/\m_A^2).
\]
Since $\mathfrak{sl}_2(\m_A/ \m_A^2)$ is an abelian group, we get
\[
\Gg(A/\m_A^2, \m_A/\m_A^2)^{\ab} \simeq \mathfrak{sl}_2(\m_A/\m_A^2). 
\]
%By the definition of the map $\Phi$, we obtain: 
%\begin{align*}
%            &\Phi(\langle E_{21}(y) \mid y \in \m_A \rangle) = \mathfrak{n}^{-}\\
%            &\Phi(\langle D(a+1) \mid a \in \m_A\rangle) = \mathfrak{h}, \\
%            &\Phi(\langle  E_{12}(x) \mid x \in \m_A \rangle) = \mathfrak{n}^+.
%        \end{align*}
\end{proof}
\begin{rem}
    From the map $\Phi$, defined in the proof of Theorem \ref{H1(Gammaloc)}, we see that the subgroups $\langle E_{21}(y) \mid y \in \m_A/\m_A^2 \rangle$, $\langle D(a+1) \mid a \in \m_A/\m_A^2\rangle$, and $\langle  E_{12}(x) \mid x \in \m_A/\m_A^2 \rangle$ of $\SL_2(A/\m_A^2)$ correspond, respectively, to the terms $\mathfrak{n}^{-}$, $\mathfrak{h}$ and $\mathfrak{n}^{+}$, from the triangular decomposition of $\mathfrak{sl}_2(\m_A/\m_A^2)$.
\end{rem}

Once we have computed the group $\Gg(A, \m_A)^\ab$ we are able to obtain the group structure of $\Gg_1(A, \m_A)^\ab$ and $\Gg_0(A, \m_A)^\ab$ using the $5$-term exact sequence associated to the Lyndon-Hochschild-Serre spectral sequence \cite[Corollary 6.4, Chap. VII]{brown1994}, as we present in the next propositions. 
%For the next proposition recall $\mathfrak{n}^-\simeq \langle E_{21}(y) \mid y \in \m_A \rangle$ as in Theorem \ref{H1(Gammaloc)}. 

%{\color{blue}{Should we ask for a local ring with finite residue field? Since, in general, a local ring is not a Dedekind domain (it holds true iff the local ring is a DVR or a field). Also for Proposition \ref{H1GAM0loc}}.}

\begin{prp}
\label{H1GAM1loc}
    %Let $A$ be a  SPIR with $\char(A/\m_A)\neq 2$ and $A/\m_A \simeq \F_{p^f}$. Then, 
    Let $A$ be a local ring.
    \begin{enumerate}
        \item[(i)] If $|A/\m_A| \geq 4$, then
        \[
        \Gg_1(A, \m_A)^\ab \simeq A/\m_A.
        \]
        \item[(ii)] If $|A/\m_A| = 3$, then
        \[
        \Gg_1(A, \m_A)^\ab \simeq \mathfrak{n}^- \oplus A/\m_A. 
       \]
    \end{enumerate}     
\end{prp}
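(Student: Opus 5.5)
For (i) I would not use the spectral sequence. By Lemma \ref{Gg_0abcond}, $\Gg(A,\m_A)\subseteq[\Gg_1(A,\m_A),\Gg_1(A,\m_A)]$. So Lemma \ref{h1ofcong} with $J=\m_A$ and $H=\Gg_1(A,\m_A)$ gives
\[
\Gg_1(A,\m_A)^\ab\simeq\bigl(\Gg_1(A,\m_A)/\Gg(A,\m_A)\bigr)^\ab .
\]
The map $\mu$ from the second extension in Section \ref{sec1} identifies this quotient with the additive group $A/\m_A$. Concretely, $\mu$ sends a matrix to the class of $b$ mod $\m_A$. It is a homomorphism because the $(1,2)$ entry of a product is $ab'+bd'\equiv b+b'$ modulo $\m_A$, since $a,d'\equiv 1$. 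It is surjective via $E_{12}(x)$, and its kernel is $\Gg(A,\m_A)$. The quotient is abelian, so (i) follows. No hypothesis on the characteristic is needed here, which matches the statement.

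For (ii), $A/\m_A\simeq\F_3$ and $2$ is a unit. I would use the five-term exact sequence of
\[
1\to\Gg(A,\m_A)\to\Gg_1(A,\m_A)\xrightarrow{\mu}A/\m_A\to 0 .
\]
Since $A/\m_A$ is cyclic of order $3$, $H_2(A/\m_A,\z)=0$, and the sequence becomes
\[
0\to H_1(\Gg(A,\m_A))_{A/\m_A}\to\Gg_1(A,\m_A)^\ab\to A/\m_A\to 0 .
\]
By Theorem \ref{H1(Gammaloc)}, $H_1(\Gg(A,\m_A))\simeq\mathfrak{sl}_2(\m_A/\m_A^2)$, with the conjugation action of $E_{12}(1)$, a lift of the generator.

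The coinvariants can then be computed in the Lie algebra, where conjugation by $E_{12}(1)$ acts as $X\mapsto E_{12}(1)XE_{12}(1)^{-1}$. Writing $X=\begin{pmatrix} a&b\\ c&-a\end{pmatrix}$ with entries in $\m_A/\m_A^2$, one gets
\[
gXg^{-1}-X=\begin{pmatrix} c & -2a-c\\ 0 & -c\end{pmatrix}.
\]
Taking $c=0$ and $a$ arbitrary shows that all of $\mathfrak{n}^+$ is killed, because $2$ is invertible. Taking $c$ arbitrary then kills $\mathfrak{h}$ modulo $\mathfrak{n}^+$. Nothing in the image has a nonzero lower-left entry, so the coinvariants are exactly $\mathfrak{n}^-\simeq\m_A/\m_A^2$. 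This is the one place where the structure of $\Gg(A,\m_A)^\ab$ through the Lie algebra really matters.

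It remains to split the extension. The group $\mathfrak{n}^-$ is an $\F_3$-vector space, since $3\in\m_A$ annihilates $\m_A/\m_A^2$. So it suffices to produce an element of $\Gg_1(A,\m_A)^\ab$ mapping to $1$ and having order $3$. I would take the class of $E_{12}(1)$: its cube is $E_{12}(3)$, which lies in $\Gg(A,\m_A)$. Its image in the coinvariants is the class of $\begin{pmatrix}0&3\\0&0\end{pmatrix}\in\mathfrak{n}^+$, which is zero. Hence the sequence splits, and $\Gg_1(A,\m_A)^\ab\simeq\mathfrak{n}^-\oplus A/\m_A$.

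I expect the main obstacle to be the identification of the first map in the five-term sequence: one must check that conjugation on $\Gg(A,\m_A)^\ab$ corresponds, under $\Phi$ composed with reduction mod $\m_A^2$, to conjugation of matrices in $\mathfrak{sl}_2$. This holds because reduction mod $\m_A^2$ is equivariant for $\SL_2(A)$.
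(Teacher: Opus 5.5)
Your argument for (i) is the paper's argument, and it breaks at the same point. The inclusion $\Gg(A,\m_A)\subseteq[\Gg_1(A,\m_A),\Gg_1(A,\m_A)]$ from Lemma~\ref{Gg_0abcond} is false whenever $\m_A\neq\m_A^2$. The proof of that lemma takes commutators with $D(u)$ for a unit $u$ with $u^2-1\in A^\times$. But $D(u)\in\Gg_1(A,\m_A)$ forces $u\equiv 1\pmod{\m_A}$, hence $u^2-1\in\m_A$. That argument therefore only gives $\Gg_1(A,\m_A)\subseteq[\Gg_0(A,\m_A),\Gg_0(A,\m_A)]$.

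The inclusion genuinely fails. The map $\phi\colon\Gg_1(A,\m_A)\to\m_A/\m_A^2$, $\begin{pmatrix} a&b\\ c&d\end{pmatrix}\mapsto\overline{c}$, is a homomorphism: the $(2,1)$ entry of a product is $ca'+dc'\equiv c+c'\pmod{\m_A^2}$, because $c,c'\in\m_A$ and $a'-1,\,d-1\in\m_A$. So $\phi$ kills $[\Gg_1,\Gg_1]$, but it does not kill $E_{21}(y)\in\Gg(A,\m_A)$ for $y\in\m_A\setminus\m_A^2$. Together with $\mu$, it gives a surjection $\Gg_1(A,\m_A)^\ab\twoheadrightarrow\m_A/\m_A^2\oplus A/\m_A$. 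Hence statement (i) itself is false. For example, take $A=\z/p^2\z$ with $p\geq5$: the abelianization has order at least $p^2$. The same error propagates to Theorem~\ref{Gg_1Glob1} for $|\kappa(\ppp)|\geq4$.

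Your own part (ii) should have raised the alarm. It uses $|A/\m_A|=3$ only through $\char(A/\m_A)\neq2$ and $H_2(\z/3,\z)=0$. So it runs verbatim for $A/\m_A\simeq\F_p$ with $p\geq5$, yields $\mathfrak{n}^-\oplus A/\m_A$, and contradicts (i).

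Part (ii) is correct and follows the paper's route: the five-term sequence, equivariance of reduction mod $\m_A^2$, and coinvariants $\mathfrak{sl}_2(\m_A/\m_A^2)/(\mathfrak{h}\oplus\mathfrak{n}^+)\simeq\mathfrak{n}^-$. It is in fact more complete than the paper, which passes from the short exact sequence to a direct sum without addressing the splitting. Your lift $E_{12}(1)$ supplies exactly that: its cube $E_{12}(3)$ lands in $\mathfrak{n}^+$ and so dies in the coinvariants.

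The map $\phi$ also gives a shorter proof that repairs (i). Assume $\char(A/\m_A)\neq2$. The kernel of $(\phi,\mu)$ consists of the elements of $\Gg(A,\m_A)$ whose image in $\mathfrak{sl}_2(\m_A/\m_A^2)$ lies in $\mathfrak{h}\oplus\mathfrak{n}^+$. This subgroup is precisely the image of $X\mapsto gXg^{-1}-X$ for $g=E_{12}(1)$, and these elements are realized by the commutators $[E_{12}(1),X]$. Since $\Gg(A,\m_A^2)$ lies in the commutator subgroup by Lemma~\ref{gamcom}, $(\phi,\mu)$ induces $\Gg_1(A,\m_A)^\ab\simeq\m_A/\m_A^2\oplus A/\m_A\simeq\mathfrak{n}^-\oplus A/\m_A$ whenever $\char(A/\m_A)\neq2$, not only for $|A/\m_A|=3$.
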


\begin{proof}\leavevmode
%{\color{red}{(Check this proof again!!!!!)}}
    \begin{enumerate}
        \item[(i)] By Lemma \ref{Gg_0abcond}, $\Gg(A, \m_A) \subseteq[\Gamma_1(A, \m_A), \Gamma_1(A, \m_A)]$. Thus, from Lemmas \ref{h1ofcong} and \ref{gamma_i/gamma}, we have:
        \[ \Gamma_1(A, \m_A)^{\ab} \simeq \Gg_1(A/\m_A, \m_A/\m_A)^{\ab} \simeq (A/\m_A)^{\ab} \simeq A/\m_A.\]

        %{\color{blue}{The residue field $A/\m_A$ is isomorphic to the ADDITIVE abelian group $\F_p^n$, where $p$ is a prime number. For finite fields, $|A/\m_A|$ is always a power of a prime number.}} {\color{red}{If $A/\m_A$ is finite, as additive group it is isomorphic to $\F_p^n$ and not $\z/p^n$!!}}

        \item[(ii)] Since $\char(A/\m_A)\neq 2$, Lemma \ref{gamcom} gives $\Gg(A, \m_A^2) \subseteq [\Gg_1(A, \m_A), \Gg_1(A,\m_A)]$. From Lemma \ref{h1ofcong} and Corollary \ref{quotient}, we get  $\Gg_1(A, \m_A)^\ab \simeq \Gg_1(A/\m_A^2, \m_A/\m_A^2)^\ab$.

        %{\color{red}{(Why? What happens to the rest of the exact sequence to the left? I see only three terms. The other term is $H_2(A/\m_A,\z)\simeq (A/\m_A) \wedge (A/\m_A)$ which is zero if $A/\m_A\simeq \F_3\simeq \z/3$.)}}

        %{\color{blue}{(In this case, the residue field $A/\m_A$ is a finite field with $3$ elements, i.e., $A/\m_A$ is $\F_3$, being isomorphic to $\z_3=\z/3\z$ (this holds in general: for a prime $p$, $\F_p\simeq\z_p=\z/p\z$), which is an additive finite cyclic group. Lastly, integral homologies of even index of finite cyclic groups all vanish \cite[pp. 58-59]{brown1994}.)}}
        
        Note that $A/\m_A \simeq \F_3$, which is an additive finite cyclic group, and hence $H_2(A/\m_A, \z)$ is trivial \cite[pp. 58-59]{brown1994}. Thus, by applying the $5$-term exact sequence \cite[Corollary 6.4, Chap. VII]{brown1994} to the group extension
        \[
        0 \rightarrow \Gg(A/\m_A^2, \m_A/ \m_A^2) \rightarrow \Gg_1(A/\m_A^2, \m_A/\m_A^2)
        \rightarrow A/\m_A \rightarrow 0,
        \]
        we get
        \[
        0 \rightarrow \Gg(A/\m_A^2, \m_A/\m_A^2)^\ab_{A/\m_A} \rightarrow \Gg_1(A/\m_A^2, \m_A/\m_A^2)^\ab \rightarrow A/\m_A \rightarrow 0,
        \]
        where the action of $A/\m_A$ on $\Gg(A/\m_A^2, \m_A/\m_A^2)^{\ab}$ is given by: 
        \[
        E_{12}(\xi)\overline{X} E_{12}(-\xi). 
        \]
        By Theorem \ref{H1(Gammaloc)} (i), we have:
        \[
        \Gg(A/\m_A^2, \m_A/\m_A^2)^\ab_{A/\m_A} \simeq \mathfrak{n}^-_{A/\m_A} \oplus \mathfrak{h}_{A/\m_A} \oplus \mathfrak{n}^+_{A/\m_A},
        \]
        with the action of $A/\m_A$ on each summand is transferred from $\mathfrak{sl}_2(\m_A/\m_A^2)$ by the map $\Phi$ of Theorem \ref{H1(Gammaloc)}. 
    
        Since $\mathfrak{n}^+\simeq \langle E_{12}(x) \mid x \in \m_A \rangle$ and $E_{12}(\xi) \overline{E_{12}(x)} E_{12}(-\xi) =\overline{E_{12}(x)}$ we get
        \[ 
       \mathfrak{n}^+_{A/\m_A} \simeq \mathfrak{n}^+.
        \]
        %\textcolor{blue}{please accept my apologizes, there was a type before. This equality leads to the fact that the Lie subalgebra $\mathfrak{n}^+$ has trivial coinvariant module under the action of $A/\m_A$, I have written something wrong}. 
        
        To evaluate the coinvariants of the action over $\mathfrak{h}$, consider the isomorphism $\mathfrak{h}\simeq \langle D(1+a) \mid a \in 
        \m_A \rangle$ and check that
        \[
        \Phi(E_{12}(\xi) \overline{D(1+a)} E_{12}(-\xi))-\Phi(D(1+a))=\begin{pmatrix}
            0 & -2\xi a\\
            0 & 0
            \end{pmatrix}.
        \]
        Similarly, for $\mathfrak{n}^-\simeq \langle E_{21}(y) \mid y \in \m_A\rangle$, we have
        \[
        \Phi(E_{12}(\xi)E_{21}(y)E_{12}(-\xi)) - \Phi(E_{21}(y))
        =
        \begin{pmatrix}
          \xi y & -\xi^2 y\\
          0  &  -\xi y
        \end{pmatrix},
        \]
        Since $\char(A/\m_A) \neq 2$, straightforward computations show that 
        \[ \langle E_{12}(\xi)ME_{12}(-\xi) : M \in \mathfrak{n}^{-} \oplus \mathfrak{h} \rangle \simeq \mathfrak{h} \oplus \mathfrak{n}^{+}\]
        Therefore,
        \[ \Gamma(A/\m_A^2, \m_A/\m_A^2)_{(A/\m_A)}  \simeq \frac{\mathfrak{n}^{-} \oplus \mathfrak{h} \oplus \mathfrak{n}^{+}}{\mathfrak{h} \oplus \mathfrak{n}^{+}} \simeq \mathfrak{n}^{-},\]
        and we conclude that 
        \[
        \Gg_1(A/\m_A)^\ab\simeq \mathfrak{n}^-\oplus A/\m_A.
        \]
    \end{enumerate}

\end{proof}

%For the group $\Gg_0(A, \m_A)$ we can replace the condition $\char(A/\m_A)\neq 2$ for $|A/\m_A| \neq 2$ as showed in the next lemma. 

\begin{prp}
\label{H1GAM0loc}
    Let $A$ be a local ring with finite residue field.
    \begin{itemize}
        \item [(i)] If $|A/\m_A| \geq 4$, then
        \[
        H_1(\Gg_0(A, \m_A), \z) \simeq (A/\m_A)^\times.
        \]
        \item [(ii)] If $|A/\m_A|=3$, then
        \[
        H_1(\Gg_0(A, \m_A), \z) \simeq \mathfrak{n}^{-} \oplus A/\m_A \oplus (A/\m_A)^{\times}. 
        % i changed \F_2 to \z/2 because i think it is more consistent, since we used \z/3 instead of \F_3
        \]
    \end{itemize}
\end{prp}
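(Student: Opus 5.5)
For part (i), I would argue exactly as in Proposition \ref{H1GAM1loc}(i). Since $|A/\m_A| \geq 4$, Lemma \ref{Gg_0abcond} gives $\Gg(A,\m_A) \subseteq [\Gg_0(A,\m_A),\Gg_0(A,\m_A)]$. Lemma \ref{h1ofcong} (with $J=\m_A$) then reduces the problem to the abelianization of $\Gg_0(A,\m_A)/\Gg(A,\m_A)$. By Lemma \ref{gamma_i/gamma}, this quotient is $\Gg_0(A/\m_A, \m_A/\m_A)$; the hypothesis holds because $A/\m_A$ is a field, hence local and a $\GE_2$-ring by Lemma \ref{GE1}. The group $\Gg_0(A/\m_A,0)$ is the Borel subgroup $B$ of upper triangular matrices in $\SL_2(F)$, where $F=A/\m_A$. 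Its abelianization is computed from $B = U \rtimes T$, with $U\simeq F$ and $T \simeq F^\times$. By Lemma \ref{elem}, $[B,B]$ contains $\Ee_{12}(a(u^2-1))$ for all $a$. Since $|F|\ge4$, we may pick $u$ with $u^2-1\ne0$, so $U\subseteq[B,B]$. Hence $B^{\ab}\simeq T\simeq F^\times$, as claimed.

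For part (ii), I would use the extension $1\to\Gg_1(A,\m_A)\to\Gg_0(A,\m_A)\overset{\lambda}{\to}(A/\m_A)^\times\to1$ and show that it contributes a direct summand $\z/2$. First reduce modulo $\m_A^2$: since $\char(A/\m_A)=3\ne2$, Lemma \ref{gamcom} and Corollary \ref{quotient} together with Lemma \ref{h1ofcong} give $\Gg_0(A,\m_A)^{\ab}\simeq\Gg_0(A/\m_A^2,\m_A/\m_A^2)^{\ab}$. So we may assume $\m_A^2=0$. The extension splits via $\overline{u}\mapsto D(u)$ with $u=\pm1$. Here $(A/\m_A)^\times=\{\pm1\}$ and $D(-1)=-I$ is central. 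Thus $\Gg_0 \simeq \Gg_1\times\{\pm I\}$ whenever $-I\notin\Gg_1$, which holds because $-1\not\equiv1 \pmod{\m_A}$ in characteristic $3$. Abelianization commutes with direct products, so
\[\Gg_0(A,\m_A)^{\ab}\simeq\Gg_1(A,\m_A)^{\ab}\oplus \z/2\simeq \mathfrak{n}^-\oplus A/\m_A\oplus(A/\m_A)^\times\]
by Proposition \ref{H1GAM1loc}(ii).

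The step needing most care is the direct product decomposition. I have to check that $\Gg_0=\Gg_1\cdot\{\pm I\}$. This holds because a matrix in $\Gg_0$ has $a\equiv\pm1 \pmod{\m_A}$, and multiplying by $-I$ when needed puts it in $\Gg_1$. The product is then direct, with both factors normal, since $\{\pm I\}$ is central. I also have to be sure that the identification of $\Gg_1^{\ab}$ from Proposition \ref{H1GAM1loc}(ii) is compatible with this decomposition. It is, since the decomposition happens at the group level, so no coinvariant computation is needed here. By contrast, the five-term sequence route would require analysing the action of $D(-1)$, which is trivial anyway.
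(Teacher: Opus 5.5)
Your proof is correct, but it is organized differently from the paper's. The paper treats both parts with one mechanism. It passes to $A/\m_A^2$ via Lemma \ref{gamcom}. It then applies the five-term sequence to $1\to\Gg_1\to\Gg_0\to(A/\m_A)^\times\to1$; finiteness of the residue field makes $(A/\m_A)^\times$ cyclic, so the $H_2$ term vanishes. Finally it computes the $(A/\m_A)^\times$-coinvariants of $\Gg_1^{\ab}$. For $|A/\m_A|\ge4$ these coinvariants vanish, because the images of $E_{12}(x)$ and $E_{21}(y)$ have the form $D(\tilde u)\cdot X\cdot D(\tilde u)^{-1}-X$. For $|A/\m_A|=3$ the action is trivial, so the coinvariants are all of $\Gg_1^{\ab}$.

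In (i) you instead kill all of $\Gg(A,\m_A)$ and compute the abelianization of the Borel subgroup of $\SL_2(A/\m_A)$ by hand. This bypasses both the five-term sequence and Lemma \ref{gamcom}, so it needs neither a finite residue field nor $\char(A/\m_A)\neq2$. That matters: the statement of (i) allows residue fields such as $\F_4$, where the paper's appeal to Lemma \ref{gamcom} is not available.

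In (ii), your internal decomposition $\Gg_0=\Gg_1\times\{\pm I\}$ makes explicit the splitting that the paper leaves tacit after the five-term sequence. There the splitting is automatic, since $\Gg_1^{\ab}$ has exponent $3$ and the quotient has order $2$. Also, your decomposition already holds over $A$, and Proposition \ref{H1GAM1loc}(ii) is stated for $A$, so your preliminary reduction modulo $\m_A^2$ can be dropped.

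One caution. From Lemma \ref{Gg_0abcond} you use only the inclusion $\Gg(A,\m_A)\subseteq[\Gg_0(A,\m_A),\Gg_0(A,\m_A)]$, and that is exactly what its computations establish. The commutators $[E_{12}(-a'),D(u)]$ there need $\bar u\neq\pm1$, so $D(u)$ lies in $\Gg_0$ but not in $\Gg_1$. Do not rely on the $\Gg_1$ half of that lemma. For $A=\z/25$, one checks that $[\Gg_1,\Gg_1]$ is just the group of upper triangular matrices in $\Gg(\z/25,5\z/25)$, which does not contain $E_{21}(5)$.
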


\begin{proof}
     Lemma \ref{gamcom} and Corollary \ref{quotient} give $\Gg_0(A, \m_A)^\ab \simeq \Gg_0(A/\m_A^2, \m_A/\m_A^2)^\ab$. Note that $(A/\m_A)^{\times}$ is a finite cyclic group \cite[Theorem 2.1.3]{bb2002}, and so $H_2((A/\m_A)^{\times}, \z)$ is trivial \cite[pp. 58-59]{brown1994}. Thus, by applying the $5$-term exact sequence \cite[Corollary 6.4, Chap. VII]{brown1994} to
     \[
     0 \rightarrow \Gg_1(A/\m_A^2, \m_A/\m_A^2) \rightarrow \Gg_0(A/\m_A^2, \m_A/\m_A^2) \rightarrow (A/\m_A)^\times \rightarrow 0,
    \]
    we obtain
    
     %applying five term exact sequence yields to the following: {\color{red}{(Why? What happens to the rest of the exact sequence to the left?
     %The other term is $H_2((A/\m_A)^\times,\z)$ which is zero if $A/\m_A$ is finite.)}}
     %{\color{blue}{Again, for finite fields, $(A/\m_A)^\times$ is a finite cyclic group}, since the multiplicative group of a finite field is a cyclic group \cite[Theorem 2.1.3]{bb2002}, and so the homology in question vanishes.}
     
     \[
     0 \rightarrow \Gg_1(A/\m_A^2, \m_A/\m_A^2)^{\ab}_{(A/\m_A)^{\times}} \rightarrow \Gg_0(A/\m_A^2, \m_A/\m_A^2)^{\ab} \rightarrow (A/\m_A)^\times \rightarrow 0.
      \]
      Now, observe that the action of $(A/\m_A)^\times$ over $\Gg_1(A/\m_A^2, \m_A/\m_A^2)^{\ab}$ is completely determined by the action of $\overline{D(u)}$ on $\overline{E_{12}(x)}$ and $\overline{E_{21}(y)}$ for $u \in (A/\m_A)^\times,\ x \in A/\m_A^2$ and $y \in \m_A/\m_A^2$.
      
      For $|A/\m_A| \geq 4$, there exists a unit $u \in A/\m_A$ such that $u^2 - 1$ is also invertible. Let $\tilde{u} \in A/\m_A^2$ such that $\overline{\tilde{u}} = u$ in the quotient $(A/\m_A^2)/(\m_A/\m_A^2)$. Given any $x \in A/\m_A^2$ we can write 
      \[ \overline{E_{12}(x)}  = \overline{E_{12}(x'(\tilde{u}^2-1))} = \overline{D(\tilde{u})}\cdot \overline{E_{12}(x')}\cdot \overline{D(\tilde{u}^{-1})} - \overline{E_{12}(x')},\]
      for some $x' \in A$. Similarly, given $y \in \m_A/\m_A^2$, there exists $y' \in \m_A/\m_A^2$ such that $y = y'(\tilde{u}^2 -1)$, and we obtain
      \[\overline{E_{21}(y)}  = \overline{E_{21}(y'(\tilde{u}^2-1))} = \overline{D(\tilde{u})}\cdot \overline{E_{21}(y')}\cdot \overline{D(\tilde{u}^{-1})} - \overline{E_{21}(y')}.\]
    Therefore,
    \[\Gg_1(A/\m_A^2, \m_A/\m_A^2)^{\ab}_{(A/\m_A)^{\times}} \simeq 0.\]
     % Therefore, Lemma \ref{elem} implies that $\Gg_1(A/\m_A^2, \m_A/\m_A^2)^\ab_{(A/\m_A)^\times}=0$ for $|A/\m_A| \geq 4$.
     
     If $|A/\m_A|=3$, $u^2 -1 = 0$ for any $u \in (A/\m_A)^{\times}$, so the action is trivial by Lemma \ref{elem} and we get
      \[
      \Gg_1(A/\m_A^2, \m_A/\m_A^2)^\ab_{(A/\m_A)^\times}\simeq \Gg_1(A/\m_A^2, \m_A/\m_A^2)^\ab \simeq \mathfrak{n}^{-} \oplus A/\m_A,
      \]
      where the last isomorphism is given by Theorem \ref{H1(Gammaloc)}. %Hence, in this case: 
      %\[
      %\begin{aligned}
      %    \Gg_0(A, \m_A)^\ab &\simeq \z/3 \oplus \z/3 \oplus (\z/3)^{\times} \\
      %    &\simeq \z/3 \oplus \z/3 \oplus \z/2
      %\end{aligned}
      %\]
\end{proof}

\section{The abelianization of congruence subgroups of \texorpdfstring{$\SL_2$}{Lg} over $S$-integers} \label{sec3}

In the preceding section, we computed the abelianization of congruence subgroups of $\SL_2(A)$ when $A$ is a local ring. We now apply these local results to characterize the abelianization of congruence subgroups over rings of $S$-integers. Finally, we use the first homology of these groups to determine the torsion part of the second cohomology.
%Specifically, we aim to describe the first integral homology of the congruence subgroups $\Gg(A, \ppp)$, $\Gg_1(A, \ppp)$ and $\Gg_0(A, \ppp)$, where $A = \mathcal{O}_{K, S}$ (not totally imaginary, with infinitely many units) and $\ppp \in \Spec(A)$ \textcolor{blue}{ is this part unnecessary, considering the introduction?}

\subsection{First homology}
%Again, let $\hat{\Gg}(A, I)$ denote one of the groups $\Gg(A, I)$, $\Gg_0(A, I)$ and $\Gg_1(A, I)$. 
We observe that if $\Gg(A, J) \subseteq  [\hat{\Gg}(A, I), \hat{\Gg}(A, I)]$, then $\Gg(A, J) \subseteq  \hat{\Gg}(A, I)$. In particular, we must have $J \subseteq I$. In Dedekind domains, this condition is equivalent to $I \mid J$. If $I=\ppp \in \Spec(A)$,  we can decompose $J$ as $J=\ppp^\alpha J'$, for some $J'\unlhd A$ coprime to $\ppp$. %For the next result, suppose that: 

\begin{thm}
\label{DDab}
    Let $A$ be a Dedekind domain and let $\ppp$ be a nonzero prime ideal such that $\Gg(A, J) \subseteq [\hat{\Gg}(A, \ppp), \hat{\Gg}(A, \ppp)]$, for some non-trivial ideal $J \unlhd A$. Then:
    \[
    H_1(\hat{\Gg}(A, \ppp), \z) \simeq H_1(\hat{\Gg}(A/\ppp^\alpha, \ppp/\ppp^\alpha), \z) \oplus H_1(\SL_2(A/J'), \z), 
    \]
    where $J = \ppp^{\alpha}J'$, $\alpha \geq 1$ and $J'$ is coprime to $\ppp$.
    %for $J'=J/\ppp^\alpha$. 
\end{thm}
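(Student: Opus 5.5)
Since $\Gg(A, J) \subseteq [\hat{\Gg}(A, \ppp), \hat{\Gg}(A, \ppp)]$, Lemma \ref{h1ofcong} gives
\[
H_1(\hat{\Gg}(A, \ppp), \z) \simeq \big(\hat{\Gg}(A, \ppp)/\Gg(A, J)\big)^{\ab}.
\]
Because $J \neq (0)$, Corollary \ref{GE2I} shows that $A/J$ is a $\GE_2$-ring. Lemma \ref{gamma_i/gamma} then identifies the quotient with $\hat{\Gg}(A/J, \ppp/J)$. The problem is thus reduced to computing the abelianization of a congruence subgroup over the finite-type quotient ring $A/J$.

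We next use the Chinese Remainder Theorem. Since $\ppp^\alpha$ and $J'$ are coprime, $A/J \simeq A/\ppp^\alpha \times A/J'$, and therefore $\SL_2(A/J) \simeq \SL_2(A/\ppp^\alpha) \times \SL_2(A/J')$ componentwise. Under this isomorphism the ideal $\ppp/J$ corresponds to $\ppp/\ppp^\alpha \times A/J'$, because $\ppp + J' = A$ means that $\ppp$ maps onto all of $A/J'$. Each defining condition of $\hat{\Gg}$ (entries $c$, $b$, $a-1$, $d-1$ lying in the ideal) splits into the corresponding condition in each factor. In the second factor the condition is membership in the whole ring, so it is vacuous. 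This yields
\[
\hat{\Gg}(A/J, \ppp/J) \simeq \hat{\Gg}(A/\ppp^\alpha, \ppp/\ppp^\alpha) \times \SL_2(A/J').
\]
The step needing care is this identification, checked for each of $\Gg$, $\Gg_1$, $\Gg_0$. Concretely, one verifies that a matrix over $A/J$ satisfies the congruence conditions modulo $\ppp/J$ if and only if its image modulo $\ppp^\alpha$ satisfies them modulo $\ppp/\ppp^\alpha$, with no constraint on its image modulo $J'$. This holds because $\ppp/J$ is the preimage of $\ppp/\ppp^\alpha \times A/J'$, and surjectivity onto the product is automatic from CRT.

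Finally, abelianization commutes with finite direct products, and $H_1(-,\z)$ is the abelianization. Hence
\[
H_1(\hat{\Gg}(A, \ppp), \z) \simeq H_1(\hat{\Gg}(A/\ppp^\alpha, \ppp/\ppp^\alpha), \z) \oplus H_1(\SL_2(A/J'), \z).
\]
The hypothesis $\alpha \geq 1$ enters through the remark preceding the theorem: $\Gg(A, J) \subseteq \hat{\Gg}(A, \ppp)$ forces $J \subseteq \ppp$. This guarantees that the $\ppp$-primary part is nontrivial and that $\ppp/\ppp^\alpha$ is the maximal ideal of the local ring $A/\ppp^\alpha$.
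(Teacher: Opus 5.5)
Your proof is correct and follows essentially the same route as the paper. You use Lemma \ref{h1ofcong} to pass to $\hat{\Gg}(A,\ppp)/\Gg(A,J)$, then the Chinese Remainder splitting, under which $\hat{\Gg}$ becomes $\hat{\Gg}(A/\ppp^\alpha,\ppp/\ppp^\alpha)\times\SL_2(A/J')$, and finally abelianize the product. The only difference is cosmetic: the paper maps $\hat{\Gg}(A,\ppp)$ directly onto the product via $\Omega$, whereas you factor through $\hat{\Gg}(A/J,\ppp/J)$ using Corollary \ref{GE2I} and Lemma \ref{gamma_i/gamma}, which makes the surjectivity and the equality of the kernel with $\Gg(A,J)$ explicit where the paper leaves them implicit.
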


\begin{proof}
    Since $\Gg(A, J) \subseteq [\hat{\Gg}(A, \ppp), \hat{\Gg}(A, \ppp)]$, by Lemmas \ref{h1ofcong} and \ref{gamma_i/gamma} we have the isomorphism:
    \[
     \hat{\Gg}(A, \ppp)^\ab \simeq (\hat{\Gg}(A, \ppp)/\Gg(A, J))^{\ab}. 
    \]
    Now we write $J=\ppp^\alpha J'$ with $\ppp^\alpha$ and $J'$ comaximals. There is a natural surjection
    \[
    \Omega: \SL_2(A) \twoheadrightarrow \SL_2(A/\ppp^\alpha) \times \SL_2(A/ J'). 
    \]
    Let $\Omega'=\Omega|_{\Gg_i(A, \ppp)}$ and consider the following commutative diagram: 
         \[
    \begin{tikzcd}
\hat{\Gg}(A, \ppp) \arrow[r,   "\Omega'"] \arrow[d, hook,  "i_2"'] & \hat{\Gg}(A/ \ppp^\alpha, \ppp/\ppp^\alpha) \times \SL_2(A/J') \arrow[d, hook, "i_1"] \\
\SL_2(A) \arrow[r, two heads, "\Omega"'] & \SL_2(\ppp/\ppp^\alpha) \times \SL_2(A/J').
\end{tikzcd}
\]

If we take an $X \in \hat{\Gg}(A/\ppp^\alpha, \ppp/\ppp^\alpha) \times \SL_2(A/J')$, then $\Omega^{-1}(i_1(X)) \in \operatorname{im}(i_2)$, implying that $\Omega'$ is surjective. Once $\Gg(A, J) \subseteq \hat{\Gg}(A, \ppp) \cap \ker(\Omega)$,  we obtain the following exact sequence: 
\[
0 \rightarrow \Gg(A, J) \rightarrow \hat{\Gg}(A, \ppp) \xrightarrow{\Omega} \hat{\Gg}(A/\ppp^\alpha, \ppp/\ppp^\alpha) \times \SL_2(A/J') \rightarrow 0.
\]
Therefore,
\[
\hat{\Gg}(A, \ppp)^\ab \simeq (\hat{\Gg}(A, \ppp)/ \Gg(A, J))^{\ab}\simeq \hat{\Gg}(A/\ppp^\alpha, \ppp/\ppp^{\alpha}) \oplus \SL_2(A/J')^\ab.
\]
In other words,
\[
    H_1(\hat{\Gg}(A, \ppp), \z) \simeq H_1(\hat{\Gg}(A/\ppp^\alpha, \ppp/\ppp^\alpha), \z) \oplus H_1(\SL_2(A/J'), \z).
\]   
\end{proof}

\begin{lem} \label{H1-SL2-A-J'}
    Let $A$ be a Dedekind domain of arithmetic type with infinitely many units, and suppose $J$ is an ideal such that $6 \mid J$ and $\Gg(A, J) \subseteq [\SL_2(A), \SL_2(A)]$. If $(0) \neq \ppp \in \Spec(A)$, let $J'$ be the ideal obtained by removing the power of $\ppp$ from $J$ (i.e. $J' = J\cdot \ppp^{-v_{\ppp}(J)}$).
    \begin{enumerate}
        \item[(i)] If $|\kappa(\ppp)| \geq 4$, then
        \[H_1(\SL_2(A), \z) \simeq H_1(\SL_2(A/J'), \z).\]
        \item[(ii)] If $|\kappa(\ppp)| = 3$, then
        \[ H_1(\SL_2(A), \z)\simeq H_1(\SL_2(A/J'), \z) \oplus \kappa(\ppp).\]
        \item[(iii)] If $|\kappa(\ppp)| = 2$, then
        \[ H_1(\SL_2(A), \z)\simeq H_1(\SL_2(A/J'), \z) \oplus A/\ppp^2.\]
    \end{enumerate}
\end{lem}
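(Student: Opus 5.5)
The plan is to reduce everything to finite quotients and then split off the $\ppp$-primary part using the Chinese Remainder Theorem. Since $\Gg(A,J)\subseteq[\SL_2(A),\SL_2(A)]$ and $A/J$ is a $\GE_2$-ring (Corollary \ref{GE2I}), Lemmas \ref{h1ofcong} and \ref{gamma_i/gamma} (with $I=A$) give
\[
H_1(\SL_2(A),\z)\simeq \SL_2(A/J)^{\ab}.
\]
Write $J=\ppp^{\alpha}J'$ with $\alpha=v_\ppp(J)$. The Chinese Remainder Theorem gives $A/J\simeq A/\ppp^\alpha\times A/J'$, hence
\[
\SL_2(A/J)\simeq \SL_2(A/\ppp^\alpha)\times\SL_2(A/J').
\]
Abelianization commutes with finite products, so
\[
H_1(\SL_2(A),\z)\simeq \SL_2(A/\ppp^\alpha)^{\ab}\oplus H_1(\SL_2(A/J'),\z).
\]
It therefore suffices to compute $\SL_2(B)^{\ab}$ for the finite local ring $B=A/\ppp^\alpha$, with maximal ideal $\m_B=\ppp/\ppp^\alpha$ and residue field $\kappa(\ppp)$. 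If $\alpha=0$ this term is trivial. Since $6\mid J$, we have $\alpha\geq 1$ whenever $|\kappa(\ppp)|\in\{2,3\}$, because then $\ppp\mid 2$ or $\ppp\mid 3$.

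\emph{Case (i), $|\kappa(\ppp)|\geq 4$.} I would show that $\SL_2(B)$ is perfect. Choose $u\in B^\times$ with $u^2-1\in B^\times$, as in the proof of Lemma \ref{Gg_0abcond}. Then for every $a\in B$, Lemma \ref{elem} writes $\Ee_{12}(a)$ and $\Ee_{21}(a)$ as commutators, using $a=a'(u^2-1)$. Since $B$ is $\GE_2$ by Lemma \ref{GE1}, $\SL_2(B)$ is generated by these elementary matrices, so $\SL_2(B)^{\ab}=0$ and (i) follows.

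\emph{Case (ii), $|\kappa(\ppp)|=3$.} Here $\char\kappa(\ppp)=3\neq 2$, so Theorem \ref{H1(Gammaloc)} gives $\Gg(B,\m_B)^{\ab}\simeq\Sl_2(\m_B/\m_B^2)$. I would apply the five-term exact sequence to
\[
1\to\Gg(B,\m_B)\to\SL_2(B)\to\SL_2(\F_3)\to 1.
\]
We have $H_2(\SL_2(\F_3),\z)=0$, since its Schur multiplier is trivial, and $\SL_2(\F_3)^{\ab}\simeq\z/3$. This yields
\[
0\to \Sl_2(\m_B/\m_B^2)_{\SL_2(\F_3)}\to \SL_2(B)^{\ab}\to \z/3\to 0.
\]
The action on $\Sl_2(\m_B/\m_B^2)$ is the adjoint action, transferred via $\Phi$. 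I would compute the coinvariants as in Proposition \ref{H1GAM1loc}(ii). Conjugation by $\Ee_{12}(1)$ kills $\mathfrak h\oplus\mathfrak n^+$, because $2$ is invertible. Conjugation by $\Ee_{21}(1)$ symmetrically kills $\mathfrak h\oplus\mathfrak n^-$. Hence the coinvariants vanish, and $\SL_2(B)^{\ab}\simeq\z/3\simeq\kappa(\ppp)$, giving (ii).

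\emph{Case (iii), $|\kappa(\ppp)|=2$.} This is the main obstacle, since Lemma \ref{gamcom} fails in characteristic $2$. The first thing I would try is to show $\Gg(B,\m_B^2)\subseteq[\SL_2(B),\SL_2(B)]$ by explicit commutator identities of the type in the remark following Lemma \ref{gamcom}. These would use the full group $\SL_2(B)$ rather than only $\Gg(B,\m_B)$; for instance, conjugating $\Ee_{12}(x)$ by $D(u)$ for units $u\equiv 1$, and using the Steinberg-type relation $\Ee_{12}(x)\Ee_{21}(-x^{-1})\Ee_{12}(x)$ for units $x$. The aim is to reduce to $\SL_2(A/\ppp^2)^{\ab}$. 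If that containment can be established, I would define the homomorphism $\SL_2(A/\ppp^2)\to A/\ppp^2$ determined on generators by $\Ee_{12}(x)\mapsto x$ and $\Ee_{21}(y)\mapsto -y$. I would check that the relations of $\SL_2(A/\ppp^2)$ are respected, modelled on $\SL_2(\z/4)^{\ab}\simeq\z/4$. Finally, I would show that the images of $\Ee_{12}(x)$, $x\in A/\ppp^2$, generate the abelianization with no further relations. I expect verifying the well-definedness of this map, together with the possibility that $2\in\ppp\setminus\ppp^2$ versus $2\in\ppp^2$ behaves differently, to be the delicate point.
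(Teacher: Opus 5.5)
Your reduction $H_1(\SL_2(A),\z)\simeq \SL_2(A/\ppp^{\alpha})^{\ab}\oplus H_1(\SL_2(A/J'),\z)$ is essentially the paper's, and your arguments for (i) and (ii) are correct. They take a different route from the paper. The paper imports the local computation of $\SL_2(B)^{\ab}$ in all three residue cases from \cite[Proposition 4.1]{B-E2024}. You instead derive it for $|\kappa(\ppp)|\geq 3$ from the paper's own tools:
\begin{itemize}
\item for (i), perfectness of $\SL_2(B)$, using a unit $u$ with $u^2-1\in B^\times$;
\item for (ii), the five-term sequence with $H_2(\SL_2(\F_3),\z)=0$, together with the vanishing of the adjoint coinvariants. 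This vanishing does hold: conjugation by $\Ee_{12}(1)$ already produces $\mathfrak{h}\oplus\mathfrak{n}^{+}$, and conjugation by $\Ee_{21}(1)$ produces $\mathfrak{h}\oplus\mathfrak{n}^{-}$.
\end{itemize}

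The genuine gap is (iii), which you do not prove. Every step there is conditional, and none of the three facts you need is established:
\begin{itemize}
\item $\Gg(B,\m_B^2)\subseteq[\SL_2(B),\SL_2(B)]$ when the residue field is $\F_2$;
\item well-definedness of $\Ee_{12}(x)\mapsto x$, $\Ee_{21}(y)\mapsto -y$ on $\SL_2(A/\ppp^2)$;
\item the absence of further relations in the abelianization.
\end{itemize}

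The first is not a routine variant of Lemma \ref{gamcom}. Suppose $v_\ppp(2)=1$ and write a unit as $u=1+m$. Since the residue field is $\F_2$, we have $2+m\in\m_B^2$ whenever $m\notin\m_B^2$. Hence $u^2-1=m(2+m)\in\m_B^3$ for every unit $u$. So the commutators with $D(u)$ from Lemma \ref{elem}, which you propose to use, only reach $\Ee_{12}(\m_B^3)$. You would need genuinely non-diagonal relations, and you do not supply them.

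The well-definedness check is also not a transcription of the $\SL_2(\z/4)$ case. The target $A/\ppp^2$ is $\z/4$ or $\F_2[\epsilon]/(\epsilon^2)\simeq(\z/2)^2$, depending on whether $2\in\ppp^2$.

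You also need $\alpha\geq 2$, not merely $\alpha\geq 1$. When $v_\ppp(2)=1$, the condition $6\mid J$ permits $\alpha=1$. Then the $\ppp$-factor is $\SL_2(\F_2)^{\ab}\simeq\z/2$, which is not $A/\ppp^2$. The hypothesis does exclude this case, but you have to say so. Since $\Gg(A,J\ppp)$ also lies in the commutator subgroup, $\SL_2(A/\ppp^{\alpha})^{\ab}$ and $\SL_2(A/\ppp^{\alpha+1})^{\ab}$ must have the same order. This fails for $\alpha=1$, because $|\SL_2(A/\ppp^2)^{\ab}|=4$.

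The paper closes (iii) by citing the local result, which gives $A/\ppp^2$ for residue field $\F_2$. Without that input or a complete computation, your proposal proves only (i) and (ii).
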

\begin{proof}
    Since $\Gg(A, J) \subseteq [\SL_2(A), \SL_2(A)]$, we can proceed as in Theorem \ref{DDab} and conclude that 
    \[\SL_2(A)^{\ab} \simeq \SL_2(A/J)^{\ab}.\]
    Factorizing $J = \qqq_1^{\beta_1}\cdots \qqq_n^{\beta_n}$ and using Chinese Reminder Theorem and the fact thats 
    $\SL_2$ and abelianization commute with finite product, we obtain
    \[ \SL_2(A/J)^{\ab} \simeq \prod_{i =1}^n\SL_2(A/\qqq_i^{\beta_i})^\ab.\]
    Each of the factors $A/\qqq_i^{\beta_i}$ is a local ring, so we can use \cite[Proposition 4.1]{B-E2024} to see that 
    \[ \SL_2(A/\qqq_i^{\beta_i})^\ab \simeq\begin{cases}
        A/\qqq_i^2, &\text{ if }|\kappa(\qqq_i)| = 2\\ 
        \kappa(\qqq_i), &\text{ if }|\kappa(\qqq_i)| = 3\\ 
        0, &\text{ if }|\kappa(\qqq_i)| \geq  4.\\ 
    \end{cases}\]
    Hence, for $J = \ppp^{\alpha}J'$, we have
    \[ \SL_2(A/J)^{\ab} \simeq \begin{cases}
        \SL_2(A/J')^{\ab}, &\text{ if } |\kappa(\ppp)| \geq 4;\\
        \SL_2(A/J')^{\ab} \oplus \kappa(\ppp), &\text{ if } |\kappa(\ppp)| =3;\\
        \SL_2(A/J')^{\ab} \oplus A/\ppp², &\text{ if } |\kappa(\ppp)| =2.
    \end{cases}
    \]

    %\cite[p.9-13, proof of Theorems 3.1 and 3.2]{BBT2025-2}
\end{proof}

Theorem \ref{DDab} and Lemma \ref{H1-SL2-A-J'} reduce the computation of $H_1(\hat{\Gg}(A, \ppp), \z)$ to evaluating the group $\allowbreak H_1(\hat{\Gg}(A/\ppp^\alpha, \ppp/\ppp^\alpha), \z)$, since the summand $H_1(\SL_2(A), \z)$ is already known from \cite[Theorems 3.1 and 3.2]{BBT2025-2}. To determine $\hat{\Gg}(A/\ppp^{\alpha}, \ppp/\ppp^\alpha)^\ab$, we return to the results of previous sections (Theorem \ref{H1(Gammaloc)}): it suffices to analyze $\mathfrak{n}^-, \mathfrak{h}$, and $\mathfrak{n}^+$. These groups admit a simple description, which we give in the following proposition.

\begin{prp}\label{m-over-m2}
    Let $A$ be a local ring with principal maximal ideal $\m_A$. Then,
    \[ \mathfrak{n}^{-} \simeq \mathfrak{h}\simeq \mathfrak{n}^{+} \simeq \m_A/\m_A^2.\]
    Moreover,
    \begin{enumerate}
        \item[(i)] If $A$ is not a field, then 
        \[ \m_A/\m_A^2 \simeq A/\m_A.\]
    \item[(ii)] If $A$ is a field, then
    \[ \m_A/\m_A^2 = 0.\]
    \end{enumerate}
\end{prp}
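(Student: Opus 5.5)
The plan has two parts: first identify each summand of the triangular decomposition with $\m_A/\m_A^2$, then compute $\m_A/\m_A^2$ using that $\m_A$ is principal.

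\textbf{The three summands.} By definition, $\mathfrak{n}^{+}$ consists of the matrices $\begin{pmatrix} 0 & \overline{b}\\ 0 & 0\end{pmatrix}$ with $\overline{b} \in \m_A/\m_A^2$. Under addition, the map sending such a matrix to $\overline{b}$ is an isomorphism of abelian groups onto $\m_A/\m_A^2$. The same argument applies to $\mathfrak{n}^{-}$, using the $(2,1)$-entry. For $\mathfrak{h}$, the trace-zero condition forces every element to have the form $\mathrm{diag}(\overline{a}, -\overline{a})$, and $\overline{a} \mapsto \mathrm{diag}(\overline{a},-\overline{a})$ is an additive bijection from $\m_A/\m_A^2$. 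This gives $\mathfrak{n}^{-} \simeq \mathfrak{h} \simeq \mathfrak{n}^{+} \simeq \m_A/\m_A^2$. This part holds without any hypothesis on $\m_A$.

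\textbf{Case (ii).} If $A$ is a field, then $\m_A = 0$, so $\m_A/\m_A^2 = 0$ immediately.

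\textbf{Case (i).} Suppose $A$ is not a field and write $\m_A = (\pi)$ with $\pi \neq 0$. Consider the $A$-linear map $\varphi: A \to \m_A/\m_A^2$, $r \mapsto \overline{r\pi}$. It is surjective because $\m_A = A\pi$. It kills $\m_A$, since $\m_A\pi \subseteq \m_A^2$, so $\varphi$ factors through a surjection $A/\m_A \to \m_A/\m_A^2$. Now $A/\m_A$ is a field, so the kernel of this induced map is either $0$ or everything. It is injective as long as $\m_A/\m_A^2 \neq 0$, i.e.\ as long as $\pi \notin \m_A^2 = (\pi^2)$.

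\textbf{The main obstacle} is exactly this last point: showing $\m_A \neq \m_A^2$. Suppose $\pi = r\pi^2$ for some $r \in A$. Then $\pi(1 - r\pi) = 0$, and $1 - r\pi$ is a unit because $A$ is local, so $\pi = 0$. That contradicts $A$ not being a field. (Nakayama's lemma gives the same conclusion when $\m_A$ is finitely generated, which it is here since it is principal.) Hence the induced map is an isomorphism, and $\m_A/\m_A^2 \simeq A/\m_A$.

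In the intended application $A = \OO_{K,S}/\ppp^\alpha$, so $\m_A = \ppp/\ppp^\alpha$ is principal as required; $A$ is a field exactly when $\alpha = 1$.
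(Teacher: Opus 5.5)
Your proof is correct and matches the paper's: you identify $\mathfrak{n}^{\pm}$ and $\mathfrak{h}$ with $\m_A/\m_A^2$ in the same way, and you use the same surjection $r \mapsto \overline{r\pi}$ from $A$ onto $\m_A/\m_A^2$ with kernel $\m_A$. The only differences are cosmetic. You check $\m_A \neq \m_A^2$ by hand via $\pi(1-r\pi)=0$, where the paper cites Nakayama's lemma. You also get injectivity from the fact that $A/\m_A$ is one-dimensional over itself, where the paper simply asserts that the kernel is $\m_A$.
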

\begin{proof}
    %From Theorem \ref{H1(Gammaloc)}, $\Gamma(A, \m_A)^{\ab}$ can be decomposed as $\mathfrak{n}^{-} \oplus \mathfrak{h} \oplus \mathfrak{n}^{+}$.
    Each of the three terms is isomorphic to the abelian group $\m_A/\m_A^2$, through the homomorphisms given by
    \[ \begin{aligned}
        E_{12}(x) &\mapsto x,\\
        E_{21}(y) &\mapsto y,\\
        D(a + 1) &\mapsto a.
    \end{aligned}\]

    Note that by Nakayama's Lemma $\m_A = \m_A^2$ if and only if $A$ is a field. If $A$ is not a field and $\m_A = (a)$, one readily checks that the group homomorphism
    \[ \begin{aligned}
        \eta: A &\rightarrow \m_A/\m_A^2\\
        x &\mapsto \overline{ax}.
    \end{aligned}\]
    is surjective and $\ker(\eta) = \m_A$.
\end{proof}

\begin{thm}
    \label{H1(GammaGlob1)}
    Let $A$ be a Dedekind domain of arithmetic type with infinitely many units, not totally imaginary. If $(0) \neq \ppp \in \Spec(A)$ and $\char(\kappa(\ppp)) \neq 2$, then
    \begin{itemize}
        \item [(i)] For $|\kappa(\ppp)| \geq 4$, 
        \[H_1(\Gg(A, \ppp), \z) \simeq \kappa(\ppp)^3\oplus H_1(\SL_2(A), \z);
        \]
        \item [(ii)] For $|\kappa(\ppp)| =3$,
        \[
        H_1(\Gg(A, \ppp), \z) \simeq \kappa(\ppp)^2\oplus H_1(\SL_2(A), \z).  
        \]
    \end{itemize}
\end{thm}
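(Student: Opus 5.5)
The plan is to reduce to Theorem~\ref{DDab}, evaluate the local summand with Theorem~\ref{H1(Gammaloc)}, and then compare the remaining summand with $H_1(\SL_2(A),\z)$ using Lemma~\ref{H1-SL2-A-J'}.

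\textbf{Step 1: choosing the level $J$.} We need a nonzero ideal $J$ with three properties:
\begin{itemize}
\item[(a)] $\Gg(A,J)\subseteq[\Gg(A,\ppp),\Gg(A,\ppp)]$;
\item[(b)] $\Gg(A,J)\subseteq[\SL_2(A),\SL_2(A)]$;
\item[(c)] $6\mid J$ and $v_\ppp(J)\geq 2$.
\end{itemize}
The route is to show that both commutator subgroups have finite index in $\SL_2(A)$ and then apply the Congruence Subgroup Property.

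$\Gg(A,\ppp)$ has finite index in $\SL_2(A)$. It is a finitely generated arithmetic group. Its abelianization is finite: for $|S|\geq 2$, Serre's results in \cite{serre1970} give that finite index subgroups of $\SL_2(A)$ have finite abelianization, and I would cite this. Consequently $[\Gg(A,\ppp),\Gg(A,\ppp)]$ has finite index in $\Gg(A,\ppp)$, hence in $\SL_2(A)$. The same applies to $[\SL_2(A),\SL_2(A)]$, since $H_1(\SL_2(A),\z)$ is finite by \cite{BBT2025-2}.

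By the Congruence Subgroup Property, each commutator subgroup contains some $\Gg(A,J_i)$. Set $J=6\ppp^2J_1J_2$. Then $\Gg(A,J)\subseteq\Gg(A,J_1)\cap\Gg(A,J_2)$, so (a)--(c) all hold. I expect this step to be the main obstacle: the finiteness of $\Gg(A,\ppp)^{\ab}$ must be justified from the literature rather than from the results of this paper.

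\textbf{Step 2: the local summand.} Write $J=\ppp^\alpha J'$ with $\alpha\geq 2$ and $J'$ coprime to $\ppp$. Theorem~\ref{DDab} gives
\[
H_1(\Gg(A,\ppp),\z)\simeq H_1(\Gg(A/\ppp^\alpha,\ppp/\ppp^\alpha),\z)\oplus H_1(\SL_2(A/J'),\z).
\]
The ring $B=A/\ppp^\alpha$ is local with principal maximal ideal $\m_B=\ppp/\ppp^\alpha$. It is not a field because $\alpha\geq 2$, and $\char(B/\m_B)=\char(\kappa(\ppp))\neq 2$. Note that $\Gg(B,\m_B)$ is exactly $\Gg(A/\ppp^\alpha,\ppp/\ppp^\alpha)$. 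Theorem~\ref{H1(Gammaloc)} then gives
\[
\Gg(B,\m_B)^{\ab}\simeq\Sl_2(\m_B/\m_B^2)=\mathfrak n^-\oplus\mathfrak h\oplus\mathfrak n^+ .
\]
By Proposition~\ref{m-over-m2}(i), each of the three summands is isomorphic to $B/\m_B\simeq\kappa(\ppp)$. So the local summand is $\kappa(\ppp)^3$.

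\textbf{Step 3: the global summand.} Property (b) and $6\mid J$ are exactly the hypotheses of Lemma~\ref{H1-SL2-A-J'}.
\begin{itemize}
\item[(i)] If $|\kappa(\ppp)|\geq 4$, part (i) of that lemma gives $H_1(\SL_2(A/J'),\z)\simeq H_1(\SL_2(A),\z)$. Hence $H_1(\Gg(A,\ppp),\z)\simeq\kappa(\ppp)^3\oplus H_1(\SL_2(A),\z)$.
\item[(ii)] If $|\kappa(\ppp)|=3$, part (ii) gives $H_1(\SL_2(A),\z)\simeq H_1(\SL_2(A/J'),\z)\oplus\kappa(\ppp)$. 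Therefore
\[
\kappa(\ppp)^3\oplus H_1(\SL_2(A/J'),\z)\simeq\kappa(\ppp)^2\oplus\bigl(\kappa(\ppp)\oplus H_1(\SL_2(A/J'),\z)\bigr)\simeq\kappa(\ppp)^2\oplus H_1(\SL_2(A),\z).
\]
\end{itemize}
This proves both cases of the statement.
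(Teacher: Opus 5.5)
Your proof is correct and follows the paper's argument. Both proofs pick a level $J$ divisible by $6\ppp^2$ with $\Gg(A,J)$ inside the commutator, using Serre and the Congruence Subgroup Property, and then combine Theorem~\ref{DDab}, Theorem~\ref{H1(Gammaloc)}, Proposition~\ref{m-over-m2} and Lemma~\ref{H1-SL2-A-J'} exactly as you do.

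The differences are minor. The paper makes the commutator congruence directly: it is a normal noncentral subgroup of $\SL_2(A)$, so Serre's normal subgroup theorem applies, and your finite-abelianization citation is not needed. Your extra ideal $J_2$ is also redundant, since $[\Gg(A,\ppp),\Gg(A,\ppp)]\subseteq[\SL_2(A),\SL_2(A)]$ means property (b) already follows from (a).
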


\begin{proof}

If $A^{\times}$ is infinite and $A$ is not totally imaginary, $[\Gamma(A, \ppp), \Gg(A, \ppp)]$ is a congruence subgroup: it is a normal noncentral subgroup of $\SL_2(A)$, so we can use \cite[Theorem 3]{serre1970} and the Congruence Subgroup Property. Hence, there exists an ideal $I \subseteq \ppp$, such that $\Gg(A, I) \subseteq [\Gg(A, \ppp), \Gg(A, \ppp)]$. Letting $J=6\ppp^2I$, we still have $\Gg(A, J) \subseteq [\Gg(A, \ppp), \Gg(A, \ppp)]$. By Theorem \ref{DDab}, we obtain the isomorphism: 
\[
H_1(\Gg(A, \ppp), \z)\simeq H_1(\Gg(A/\ppp^\alpha, \ppp/\ppp^\alpha), \z) \oplus H_1(\SL_2(A,J'), \z),
\]
where $J = \ppp^{\alpha}J'$ and $J'$ is coprime with $\ppp$. 

Now, the result follows from Theorem \ref{H1(Gammaloc)}, Lemma \ref{H1-SL2-A-J'} and Proposition \ref{m-over-m2}.
% \cite[Theorem 3.1, Theorem 3.2]{BBT2025-2},
%Proposition \ref{Stab}. 
\end{proof}
%We have the global version for $H_1(\Gg_1(A, \ppp), \z)$:

\begin{thm}
\label{Gg_1Glob1}
     Let $A$ be a Dedekind domain of arithmetic type with infinitely many units, not totally imaginary. If $(0) \neq \ppp \in \Spec(A)$ and $|\kappa(\ppp)| \geq 3$, then
     \[H_1(\Gg_1(A, \ppp), \z) \simeq \kappa(\ppp) \oplus H_1(\SL_2(A), \z).
        \]
\end{thm}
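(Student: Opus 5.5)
We follow the same template as the proof of Theorem \ref{H1(GammaGlob1)}, now applied to $\Gg_1(A,\ppp)$.

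\emph{Step 1: the commutator subgroup is a congruence subgroup.} Since $\Gg(A,\ppp) \subseteq \Gg_1(A,\ppp)$, we have
\[
[\Gg(A,\ppp),\Gg(A,\ppp)] \subseteq [\Gg_1(A,\ppp),\Gg_1(A,\ppp)].
\]
In the proof of Theorem \ref{H1(GammaGlob1)} we obtained an ideal $I \subseteq \ppp$ with $\Gg(A,I) \subseteq [\Gg(A,\ppp),\Gg(A,\ppp)]$. That argument used \cite[Theorem 3]{serre1970} together with the Congruence Subgroup Property, which needs $A^\times$ infinite and $A$ not totally imaginary. It does not use $\char(\kappa(\ppp))\neq 2$, so it applies here as well. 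Hence $\Gg(A,I) \subseteq [\Gg_1(A,\ppp),\Gg_1(A,\ppp)]$. We set $J = 6\ppp^2 I$; then still $\Gg(A,J)$ lies in the commutator and $6 \mid J$. Theorem \ref{DDab} then gives
\[
H_1(\Gg_1(A,\ppp),\z) \simeq H_1(\Gg_1(A/\ppp^\alpha,\ppp/\ppp^\alpha),\z)\oplus H_1(\SL_2(A/J'),\z),
\]
where $\alpha \geq 2$.

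\emph{Step 2: the local factor.} The ring $A/\ppp^\alpha$ is local with residue field $\kappa(\ppp)$, so we apply Proposition \ref{H1GAM1loc}.
\begin{itemize}
\item If $|\kappa(\ppp)|\geq 4$, Proposition \ref{H1GAM1loc}(i) gives the local factor $\kappa(\ppp)$. Note that part (i) does not require odd characteristic: it only uses Lemma \ref{Gg_0abcond}.
\item If $|\kappa(\ppp)|=3$, Proposition \ref{H1GAM1loc}(ii) gives $\mathfrak{n}^-\oplus\kappa(\ppp)$. Since $\alpha\ge 2$, the ring $A/\ppp^\alpha$ is not a field and its maximal ideal is principal. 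So Proposition \ref{m-over-m2} gives $\mathfrak{n}^-\simeq \m/\m^2\simeq\kappa(\ppp)$, and the local factor is $\kappa(\ppp)^2$.
\end{itemize}

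\emph{Step 3: the global factor.} We use Lemma \ref{H1-SL2-A-J'}.
\begin{itemize}
\item If $|\kappa(\ppp)|\ge4$, part (i) gives $H_1(\SL_2(A/J'),\z)\simeq H_1(\SL_2(A),\z)$. Combined with Step 2, this yields $\kappa(\ppp)\oplus H_1(\SL_2(A),\z)$.
\item If $|\kappa(\ppp)|=3$, part (ii) gives $H_1(\SL_2(A),\z)\simeq H_1(\SL_2(A/J'),\z)\oplus\kappa(\ppp)$. So one copy of $\kappa(\ppp)$ from the local factor $\kappa(\ppp)^2$ is absorbed into $H_1(\SL_2(A),\z)$. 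Since all groups involved are finite abelian, we may cancel a direct summand and conclude
\[
\kappa(\ppp)^2\oplus H_1(\SL_2(A/J'),\z)\simeq \kappa(\ppp)\oplus H_1(\SL_2(A),\z).
\]
\end{itemize}

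\emph{Main obstacle.} The delicate point is the $|\kappa(\ppp)|=3$ case. There the formula only works because the extra $\mathfrak{n}^-$ summand exactly matches the $\kappa(\ppp)$ that $\SL_2(A/\ppp^\alpha)^{\ab}$ contributes to $H_1(\SL_2(A),\z)$. This requires $\alpha\ge 2$, which is why we insert the factor $\ppp^2$ into $J$. We also need $H_1(\SL_2(A),\z)$ to be finitely generated so that the regrouping is legitimate. It is in fact finite, being isomorphic to the finite group $\SL_2(A/J)^{\ab}$, so the identification is immediate.
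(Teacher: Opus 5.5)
\textbf{Overall.} Your Steps 1 and 3 and your treatment of $|\kappa(\ppp)|=3$ follow the paper's proof, which cites Theorem \ref{DDab}, Propositions \ref{H1GAM1loc} and \ref{m-over-m2}, and Lemma \ref{H1-SL2-A-J'}. Those parts are fine. Routing the congruence argument through the normal subgroup $[\Gg(A,\ppp),\Gg(A,\ppp)]$ is a good touch, since $[\Gg_1(A,\ppp),\Gg_1(A,\ppp)]$ need not be normal in $\SL_2(A)$. In Step 3 no cancellation is needed: you just substitute $\kappa(\ppp)\oplus H_1(\SL_2(A/J'),\z)\simeq H_1(\SL_2(A),\z)$. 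The genuine gap is the first bullet of Step 2. It cannot be closed, because the statement is false for $|\kappa(\ppp)|\geq 4$.

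\textbf{Why it fails.} Proposition \ref{H1GAM1loc}(i) rests on the inclusion $\Gg(A,\m_A)\subseteq[\Gg_1(A,\m_A),\Gg_1(A,\m_A)]$ of Lemma \ref{Gg_0abcond}. Its proof writes elementary matrices as commutators with $D(u)$, where $u^2-1\in A^\times$. Such a $u$ is not $\equiv 1 \pmod{\m_A}$, so $D(u)$ lies in $\Gg_0(A,\m_A)$ but not in $\Gg_1(A,\m_A)$.

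In fact the inclusion is false. Take $M=\begin{pmatrix} a&b\\ c&d\end{pmatrix}$ and $M'=\begin{pmatrix} a'&b'\\ c'&d'\end{pmatrix}$ in $\Gg_1(A,\ppp)$. The $(2,1)$-entry of $MM'$ is $ca'+dc'\equiv c+c' \pmod{\ppp^2}$, because $a'-1,d-1,c,c'\in\ppp$. So $M\mapsto c+\ppp^2$ is a homomorphism $\Gg_1(A,\ppp)\to\ppp/\ppp^2\simeq\kappa(\ppp)$. It is nontrivial on $E_{21}(x)\in\Gg(A,\ppp)$ for $x\in\ppp\setminus\ppp^2$. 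Combine it with $\mu$, with the surjection onto $\SL_2(A/J')$ from the proof of Theorem \ref{DDab}, and with Lemma \ref{H1-SL2-A-J'}(i). This gives
\[ H_1(\Gg_1(A,\ppp),\z)\twoheadrightarrow \kappa(\ppp)\oplus\ppp/\ppp^2\oplus H_1(\SL_2(A/J'),\z)\simeq \kappa(\ppp)^2\oplus H_1(\SL_2(A),\z), \]
a finite group strictly larger than the claimed one.

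Concretely, take $A=\z[1/6]$ and $\ppp=5A$, where $H_1(\SL_2(A),\z)=0$. The map $M\mapsto (b \bmod 5,\ c/5 \bmod 5)$ sends $E_{12}(1)$ and $E_{21}(5)$ to a basis of $(\z/5)^2$, whereas the statement predicts $\z/5$. The paper's proof cites the same Proposition \ref{H1GAM1loc}(i) and has the same gap.

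\textbf{What is actually true.} When $\char(\kappa(\ppp))\neq 2$, your method yields the correct answer via the computation in the proof of Proposition \ref{H1GAM1loc}(ii), which works for every odd $|\kappa(\ppp)|$. Proposition \ref{Stab} first reduces level $\ppp^\alpha$ to level $\ppp^2$. Conjugation by $E_{12}(\xi)$ fixes the $\mathfrak{n}^-$-coordinate and kills $\mathfrak{h}\oplus\mathfrak{n}^+$ in the coinvariants. So the right-exact end of the five-term sequence bounds $|\Gg_1(A/\ppp^2,\ppp/\ppp^2)^{\ab}|$ by $|\kappa(\ppp)|^2$, and the surjection above forces an isomorphism with $\kappa(\ppp)^2$. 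Hence $H_1(\Gg_1(A,\ppp),\z)\simeq\kappa(\ppp)^2\oplus H_1(\SL_2(A),\z)$ for odd $|\kappa(\ppp)|\geq 5$. The stated formula is correct only when $|\kappa(\ppp)|=3$.
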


\begin{proof}
    As in the proof of Theorem \ref{H1(GammaGlob1)}, we can employ Theorem \ref{DDab}, Propositions \ref{H1GAM1loc} and \ref{m-over-m2}, and Lemma \ref{H1-SL2-A-J'}.
    %As in the the proof of Theorem \ref{H1(GammaGlob1)} we can employ \cite[Theorem 3.1, Theorem 3.2]{BBT2025-2} and Propostion \ref{H1GAM1loc} to obtain the desired result. 
    
\end{proof}
%For $\Gg_0(A, \ppp)$ we have the following result provide $|A/\ppp| \neq 2$:

\begin{thm}
    \label{Gg_0Glob}
    Let $A$ be a Dedekind domain of arithmetic type with infinitely many units, not totally imaginary. If $(0) \neq \ppp \in \Spec(A)$, then 
    \begin{itemize}
        \item [(i)] For $|\kappa(\ppp)| \geq 4$, 
        \[
        H_1(\Gg_0(A, \ppp), \z) \simeq \kappa(\ppp)^\times \oplus H_1(\SL_2(A), \mathbb{Z}).
        \]
        \item [(ii)] For $|\kappa(\ppp)|=3$, 
        \[
        H_1(\Gg_0(A, \ppp), \z)\simeq \kappa(\ppp) \oplus \kappa(\ppp)^\times \oplus H_1(\SL_2(A), \z).  
        \]
    \end{itemize}
\end{thm}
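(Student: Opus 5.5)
The argument will run along the same lines as the proofs of Theorems \ref{H1(GammaGlob1)} and \ref{Gg_1Glob1}: reduce to a finite local quotient using the Congruence Subgroup Property, then apply Proposition \ref{H1GAM0loc}.

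\textbf{Step 1: a principal congruence subgroup inside the commutator.} The commutator subgroup $[\Gg_0(A,\ppp),\Gg_0(A,\ppp)]$ contains $[\Gg(A,\ppp),\Gg(A,\ppp)]$. As observed in the proof of Theorem \ref{H1(GammaGlob1)}, the latter is a normal noncentral subgroup of $\SL_2(A)$. By \cite[Theorem 3]{serre1970} together with the Congruence Subgroup Property, it therefore contains $\Gg(A,I)$ for some nonzero ideal $I\subseteq\ppp$. Set $J=6\ppp^2 I$. Then
\[
\Gg(A,J)\subseteq [\Gg_0(A,\ppp),\Gg_0(A,\ppp)],\qquad \Gg(A,J)\subseteq [\SL_2(A),\SL_2(A)],
\]
and $6\mid J$.

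\textbf{Step 2: splitting off the global factor.} Write $J=\ppp^\alpha J'$ with $\alpha\ge 2$ and $J'$ coprime to $\ppp$. Theorem \ref{DDab} gives
\[
H_1(\Gg_0(A,\ppp),\z)\simeq H_1(\Gg_0(A/\ppp^\alpha,\ppp/\ppp^\alpha),\z)\oplus H_1(\SL_2(A/J'),\z).
\]

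\textbf{Step 3: the local factor.} The ring $B=A/\ppp^\alpha$ is a finite local principal ideal ring. It is not a field, since $\alpha\ge 2$, and its residue field is $\kappa(\ppp)$. Proposition \ref{H1GAM0loc} computes $H_1(\Gg_0(B,\m_B),\z)$: it is $\kappa(\ppp)^\times$ when $|\kappa(\ppp)|\ge4$, and $\mathfrak n^-\oplus\kappa(\ppp)\oplus\kappa(\ppp)^\times$ when $|\kappa(\ppp)|=3$. In the second case, Proposition \ref{m-over-m2}(i) identifies $\mathfrak n^-\simeq\m_B/\m_B^2\simeq\kappa(\ppp)$.

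\textbf{Step 4: recombining with $H_1(\SL_2(A),\z)$.} Lemma \ref{H1-SL2-A-J'} relates $H_1(\SL_2(A/J'),\z)$ to $H_1(\SL_2(A),\z)$.
\begin{itemize}
\item If $|\kappa(\ppp)|\ge4$, the two groups are isomorphic, so $H_1(\Gg_0(A,\ppp),\z)\simeq\kappa(\ppp)^\times\oplus H_1(\SL_2(A),\z)$. This is (i).
\item If $|\kappa(\ppp)|=3$, we have $H_1(\SL_2(A),\z)\simeq H_1(\SL_2(A/J'),\z)\oplus\kappa(\ppp)$. One copy of $\kappa(\ppp)$ from the local factor is absorbed into $H_1(\SL_2(A),\z)$, and what remains is $\kappa(\ppp)\oplus\kappa(\ppp)^\times\oplus H_1(\SL_2(A),\z)$. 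This is (ii).
\end{itemize}

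\textbf{Expected obstacle.} The main point is verifying the hypotheses of Proposition \ref{H1GAM0loc}, whose proof relies on Lemma \ref{gamcom} and so needs $\char\neq2$. For $|\kappa(\ppp)|=3$ this holds automatically. For $|\kappa(\ppp)|\ge4$ with characteristic $2$, I would bypass Lemma \ref{gamcom} and use Lemma \ref{Gg_0abcond} directly. It gives $\Gg(B,\m_B)\subseteq[\Gg_0(B,\m_B),\Gg_0(B,\m_B)]$, and then Lemma \ref{h1ofcong} yields
\[
\Gg_0(B,\m_B)^\ab\simeq \Gg_0(\kappa(\ppp),0)^\ab.
\]
This is the abelianization of the upper triangular Borel subgroup of $\SL_2(\kappa(\ppp))$. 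Since some $u$ has $u^2-1$ a unit, the unipotent radical lies in the commutator, so this abelianization is $\kappa(\ppp)^\times$.
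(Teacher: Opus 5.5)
Your proof is correct and has the same skeleton as the paper's. The paper's proof just says to combine Theorem~\ref{DDab}, Propositions~\ref{H1GAM0loc} and~\ref{m-over-m2}, and Lemma~\ref{H1-SL2-A-J'} as in Theorem~\ref{H1(GammaGlob1)}. You also make explicit the containment $\Gg(A,J)\subseteq[\Gg(A,\ppp),\Gg(A,\ppp)]\subseteq[\Gg_0(A,\ppp),\Gg_0(A,\ppp)]$, which the paper leaves implicit.

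The real difference is case (i) when $\char(\kappa(\ppp))=2$. The paper's proof of Proposition~\ref{H1GAM0loc} first reduces to $A/\m_A^2$ via Lemma~\ref{gamcom}, which needs characteristic $\neq 2$. Theorem~\ref{Gg_0Glob} carries no such hypothesis. Your bypass shows that all of $\Gg(B,\m_B)$ lies in $[\Gg_0(B,\m_B),\Gg_0(B,\m_B)]$, so $\Gg_0(B,\m_B)^{\ab}$ is the abelianization of the upper triangular subgroup of $\SL_2(\kappa(\ppp))$, namely $\kappa(\ppp)^\times$. This closes a genuine gap, and it handles every $|\kappa(\ppp)|\geq 4$ uniformly without the five-term sequence. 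The paper's route is then needed only for $|\kappa(\ppp)|=3$, where the extra $\mathfrak{n}^-\oplus\kappa(\ppp)$ appears.

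Two cautions.

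First, rely only on the outer inclusion of Lemma~\ref{Gg_0abcond}. The commutators in its proof use $D(u)$ with $u\not\equiv\pm1 \pmod{\m_B}$, which lies in $\Gg_0(B,\m_B)$ but not in $\Gg_1(B,\m_B)$. In fact the middle inclusion is false when $\m_B\neq\m_B^2$: the map $M\mapsto c \bmod \m_B^2$ is a homomorphism $\Gg_1(B,\m_B)\to\m_B/\m_B^2$ that does not vanish on $E_{21}(\pi)$, where $\pi$ is a uniformizer. For $\Gg_0$ the argument is sound. The identity for $D(1+a)$ at the end of that proof writes it as a product of matrices $E_{12}(\ast)$ and $E_{21}(\m_B)$, all of which are commutators in $\Gg_0(B,\m_B)$.

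Second, if $K$ is a function field of characteristic $2$ or $3$, then $6=0$ in $A$. Your $J=6\ppp^2 I$ is then the zero ideal and Theorem~\ref{DDab} does not apply. Take $J=\ppp^2 I$ instead. The hypothesis $6\mid J$ of Lemma~\ref{H1-SL2-A-J'} plays no role in its parts (i) and (ii). Those parts only need $\SL_2(A/\ppp^\alpha)^{\ab}$ to be $0$, resp.\ $\kappa(\ppp)$, which holds for every $\alpha\geq 1$.
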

\begin{proof}
    Once again, use Theorem \ref{DDab}, Propositions \ref{H1GAM0loc} and $\ref{m-over-m2}$, as well as Lemma \ref{H1-SL2-A-J'}.
\end{proof}

\begin{cor}
    Let $n > 1$ be a fixed integer and
    \[ A_n = \z[1/n] := \{ a/n^{k} : a \in \z, k \in \mathbb{\z}\}.\]
    Let $p$ be a prime, with $p \nmid n$, and set $\ppp = pA_n$. The first integral homology of the groups $\Gg(A_n, \ppp)$, $\Gg_1(A_n, \ppp)$ and $\Gg_0(A_n, \ppp)$ is described in Tables \ref{tab-p>3} and \ref{tab-p=3}:
    \begin{table}[h]
        \centering
        \renewcommand{\arraystretch}{1.5}
        \caption{First homology of $\Gg, \Gg_1$ and $\Gg_0$, with entries in $\z[1/n]$ and $p > 3$.}
        \begin{tabular}{c|c|c|c}
            Case & $H_1(\Gg(A_n, \ppp), \z)$ & $H_1(\Gg_1(A_n, \ppp), \z)$ &  $H_1(\Gg_0(A_n, \ppp), \z)$ \\\hline
            $2 \mid n$, $3 \mid n$ & $(\z/p)^3$ & $\z/p$ & $(\z/p)^{\times}$\\
            $2 \mid n$, $3 \nmid n$ & $(\z/p)^3 \oplus \z/3$ &  $\z/p \oplus \z/3$ & $(\z/p)^{\times} \oplus \z/3$ \\
            $2 \nmid n$, $3 \mid n$ & $(\z/p)^3 \oplus \z/4$ &  $\z/p \oplus \z/4$ & $(\z/p)^{\times} \oplus \z/4$ \\
            $2 \nmid n$, $3 \nmid n$ & $(\z/p)^3 \oplus \z/12$ & $\z/p \oplus \z/12$ & $(\z/p)^{\times} \oplus \z/12$\\
        \end{tabular}
        
        \label{tab-p>3}
    \end{table}

    \begin{table}[h]
        \centering
        \renewcommand{\arraystretch}{1.5}
        \caption{First homology of $\Gg, \Gg_1$ and $\Gg_0$, with entries in $\z[1/n]$ and $p = 3$.}
        \begin{tabular}{c|c|c|c}
            Case & $H_1(\Gg(A_n, \ppp), \z)$ & $H_1(\Gg_1(A_n, \ppp), \z)$ &  $H_1(\Gg_0(A_n, \ppp), \z)$ \\\hline
            $2 \mid n$, $3 \mid n$ & $(\z/3)^2$ & $\z/3$ & $\z/6$\\
            $2 \mid n$, $3 \nmid n$ & $(\z/3)^3$ &  $(\z/3)^2$ & $\z/6 \oplus \z/3$ \\
            $2 \nmid n$, $3 \mid n$ & $(\z/3)^2 \oplus \z/4$ &  $\z/12$ & $\z/6 \oplus \z/4$ \\
            $2 \nmid n$, $3 \nmid n$ & $(\z/3)^2 \oplus \z/12$ & $\z/3 \oplus \z/12$ & $\z/6 \oplus \z/12$\\
        \end{tabular}
        
        \label{tab-p=3}
    \end{table}
\end{cor}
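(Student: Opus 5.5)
The plan is to specialize Theorems \ref{H1(GammaGlob1)}, \ref{Gg_1Glob1} and \ref{Gg_0Glob} to $A=A_n$ and then substitute the known value of $H_1(\SL_2(A_n),\z)$.

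\textbf{Step 1: hypotheses.} Take $K=\mathbb{Q}$ and $S=\{\infty\}\cup\{q \text{ prime}: q\mid n\}$, so that $A_n=\OO_{\mathbb{Q},S}$.
\begin{itemize}
\item Since $n>1$, some prime divides $n$, hence $|S|\geq 2$ and $A_n^\times$ is infinite by Theorem \ref{dir}.
\item The place $\infty$ is real, so $A_n$ is not totally imaginary.
\item Since $p\nmid n$, the ideal $\ppp=pA_n$ is a nonzero prime of $A_n$ with $\kappa(\ppp)=A_n/pA_n\simeq \z/p$.
\end{itemize}
For $p>3$ we have $|\kappa(\ppp)|\geq 5$ and $\char\kappa(\ppp)\neq 2$, so case (i) of each of the three theorems applies. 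For $p=3$, case (ii) applies. This gives
\[
H_1(\Gg(A_n,\ppp),\z)\simeq (\z/p)^{3}\oplus H_1(\SL_2(A_n),\z),\qquad H_1(\Gg_1(A_n,\ppp),\z)\simeq \z/p\oplus H_1(\SL_2(A_n),\z),
\]
and $H_1(\Gg_0(A_n,\ppp),\z)\simeq(\z/p)^\times\oplus H_1(\SL_2(A_n),\z)$ for $p>3$. For $p=3$ the exponent $3$ becomes $2$, and $\Gg_0$ acquires the extra summand $\z/3\oplus(\z/3)^\times\simeq \z/6$.

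\textbf{Step 2: the value of $H_1(\SL_2(A_n),\z)$.} Read this off from \cite[Theorems 3.1 and 3.2]{BBT2025-2}. It can also be recovered from Lemma \ref{H1-SL2-A-J'} together with \cite[Proposition 4.1]{B-E2024}. The residue fields of size $2$ and $3$ contribute only when $2$, respectively $3$, is not inverted in $A_n$. The result is:
\begin{itemize}
\item $0$ if $6\mid n$;
\item $\z/3$ if $2\mid n$ and $3\nmid n$;
\item $\z/4$ if $2\nmid n$ and $3\mid n$;
\item $\z/12$ if $\gcd(n,6)=1$.
\end{itemize}
This is the step I expect to be the main point to check: the dependence on $n$ must match exactly which of the primes $2,3$ lie in $S$. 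The $2$-part is $A/\ppp^2=\z/4$, not $\z/2$.

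\textbf{Step 3: fill in the tables.} Substitute Step 2 into Step 1 and simplify using the Chinese remainder theorem. For $p=3$, combine $\z/3\oplus\z/2\simeq\z/6$ and $\z/3\oplus\z/4\simeq \z/12$; this accounts for entries such as $\z/12$ for $\Gg_1$ and $\z/6\oplus\z/4$ for $\Gg_0$.

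The rows of Table \ref{tab-p=3} with $3\mid n$ are excluded by the standing hypothesis $p\nmid n$, so the formal substitution there is not covered by this argument. In the $p=3$ row with $2\mid n$ and $3\nmid n$, one $\z/3$ comes from $H_1(\SL_2(A_n),\z)$ and the others from $\kappa(\ppp)$. Checking that all remaining entries agree is routine bookkeeping.
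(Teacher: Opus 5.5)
Your route is the paper's own: specialize Theorems~\ref{H1(GammaGlob1)}, \ref{Gg_1Glob1}, \ref{Gg_0Glob} and insert $H_1(\SL_2(A_n),\z)$. Your values of $H_1(\SL_2(A_n),\z)$ are right, and so is your bookkeeping for the $\Gg$ and $\Gg_0$ columns. You are also right, on a point the paper does not address, that the rows of the $p=3$ table with $3\mid n$ are vacuous, since then $3A_n=A_n$.

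\textbf{The gap.} The step giving $H_1(\Gg_1(A_n,\ppp),\z)\simeq\z/p\oplus H_1(\SL_2(A_n),\z)$ for $p>3$ fails. Theorem~\ref{Gg_1Glob1} is false when $|\kappa(\ppp)|\geq 4$, so the whole $\Gg_1$ column of the $p>3$ table is wrong, and citing it (as the paper also does) cannot prove it. For $X=\begin{pmatrix} a&b\\ c&d\end{pmatrix}\in\Gg_1(A,\ppp)$ put $\mu(X)=\overline{b}\in\kappa(\ppp)$ and $\nu(X)=c \bmod \ppp^2\in\ppp/\ppp^2$. Both are homomorphisms:
\begin{itemize}
\item the $(1,2)$-entry of $XX'$ is $ab'+bd'\equiv b+b' \pmod{\ppp}$;
\item the $(2,1)$-entry is $ca'+dc'\equiv c+c'\pmod{\ppp^2}$, because $c,c',a'-1,d-1\in\ppp$.
\end{itemize}
Since $(\mu,\nu)(\Ee_{12}(x))=(\overline{x},0)$ and $(\mu,\nu)(\Ee_{21}(y))=(0,\overline{y})$, the group $H_1(\Gg_1(A_n,\ppp),\z)$ surjects onto $\kappa(\ppp)\oplus\ppp/\ppp^2\simeq(\z/p)^2$. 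No group of the form $\z/p\oplus C$ with $|C|$ prime to $p$ can do that, and every entry of that column has this form.

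\textbf{Where the paper goes wrong.} The error is in Lemma~\ref{Gg_0abcond}. The commutator identities of Lemma~\ref{elem} used there need a diagonal $D(u)$ with $u^2-1\in A^\times$. No such matrix lies in $\Gg_1(A,\m_A)$, where $u\equiv 1\pmod{\m_A}$ and hence $u^2-1\in\m_A$.
\begin{itemize}
\item Only $\Gg(A,\m_A)\subseteq[\Gg_0(A,\m_A),\Gg_0(A,\m_A)]$ is actually proved, so the $\Gg_0$ results stand.
\item The map $\nu$ shows $\Ee_{21}(y)\notin[\Gg_1(A,\m_A),\Gg_1(A,\m_A)]$ for $y\in\m_A\setminus\m_A^2$.
\item Consequently Proposition~\ref{H1GAM1loc}(i) fails whenever $A$ is not a field.
\end{itemize}

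\textbf{Corrected values.} For $\kappa(\ppp)=\z/p$, the argument of Proposition~\ref{H1GAM1loc}(ii) goes through verbatim, since it only uses $p\neq 2$ and $H_2(\z/p,\z)=0$. It gives $\Gg_1(A_n/\ppp^\alpha,\ppp/\ppp^\alpha)^{\ab}$ of order $p^2$ for $\alpha\geq 2$, and by $(\mu,\nu)$ this group is $(\z/p)^2$. Combining this with Theorem~\ref{DDab} and Lemma~\ref{H1-SL2-A-J'}(i), the correct entries for $p>3$ are $(\z/p)^2\oplus H_1(\SL_2(A_n),\z)$, namely $(\z/p)^2$, $(\z/p)^2\oplus\z/3$, $(\z/p)^2\oplus\z/4$ and $(\z/p)^2\oplus\z/12$. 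For $p=3$ the printed $\Gg_1$ entries happen to be correct, because Lemma~\ref{H1-SL2-A-J'}(ii) absorbs one copy of $\kappa(\ppp)$ into $H_1(\SL_2(A_n),\z)$.
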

\begin{proof}
    Use the divisibility of $n$ by $2$ and $3$ to determine $H_1(\SL_2(A_n), \z)$, 
    as described in \cite[Proposition 4.4]{B-E2024} and \cite[Example 3.4]{BBT2025-2}. Then apply Theorems \ref{H1(GammaGlob1)}, \ref{Gg_1Glob1} and \ref{Gg_0Glob}.
\end{proof}

\begin{rem}
    The hypothesis of infinitely many units and $A$ not being totally imaginary in Theorems \ref{H1(GammaGlob1)}, \ref{Gg_1Glob1}, and \ref{Gg_0Glob} can be replaced by the weaker condition that the respective commutators are congruence subgroups. 
\end{rem}
\begin{rem}
    Theorem \ref{Gg_0Glob} generalizes items (i) and (ii) of \cite[Theorem C]{BBT2025-1}.
\end{rem}

\subsection{Second cohomology}

Here, we illustrate how computations of homology yield cohomological information. For the congruence subgroups under consideration, our description of the first homology determines the torsion subgroup of the second cohomology. We write $H_{\operatorname{tor}}$ for the torsion subgroup of an abelian group $H$. 

\begin{lem}\label{coh-torsion}
    Let $G$ be a group such that $H_n(G, \z)$, $H_{n+1}(G, \z)$ and $H^{n+1}(G, \z)$ are finitely generated. Then
    \[ \operatorname{rank}H_{n +1}(G, \z) = \operatorname{rank} H^{n+1}(G, \z) \text{, and } H_n(G, \z)_{\operatorname{tor}} \simeq H^{n+1}(G, \z)_{\operatorname{tor}}.\]
\end{lem}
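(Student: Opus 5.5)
The natural tool here is the Universal Coefficient Theorem for cohomology with trivial coefficients $\z$. For every $k \geq 1$ it gives a (split) short exact sequence
\[ 0 \rightarrow \operatorname{Ext}^1_{\z}(H_{k-1}(G,\z),\z) \rightarrow H^{k}(G,\z) \rightarrow \operatorname{Hom}_{\z}(H_{k}(G,\z),\z) \rightarrow 0. \]
I will apply it with $k = n+1$. Since the sequence splits,
\[ H^{n+1}(G,\z) \simeq \operatorname{Hom}(H_{n+1}(G,\z),\z)\oplus \operatorname{Ext}^1(H_n(G,\z),\z). \]
The plan is to identify each summand using the structure theorem for finitely generated abelian groups.

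For the $\operatorname{Hom}$ term, write $H_{n+1}(G,\z) \simeq \z^r \oplus T$ with $T$ finite. Then $\operatorname{Hom}(H_{n+1}(G,\z),\z) \simeq \z^r$, because homomorphisms from a finite group into $\z$ vanish. This term is therefore free of rank $r = \operatorname{rank} H_{n+1}(G,\z)$. For the $\operatorname{Ext}$ term, write $H_n(G,\z) \simeq \z^s \oplus T'$ with $T'$ finite. Additivity of $\operatorname{Ext}$ in the first variable, together with $\operatorname{Ext}^1(\z,\z)=0$ and $\operatorname{Ext}^1(\z/m,\z)\simeq \z/m$, gives $\operatorname{Ext}^1(H_n(G,\z),\z)\simeq T' \simeq H_n(G,\z)_{\operatorname{tor}}$. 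It is finite, hence torsion.

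Combining the two computations, $H^{n+1}(G,\z) \simeq \z^r \oplus H_n(G,\z)_{\operatorname{tor}}$. Reading off the free rank gives $\operatorname{rank}H^{n+1}(G,\z) = r = \operatorname{rank}H_{n+1}(G,\z)$. Reading off the torsion subgroup gives $H^{n+1}(G,\z)_{\operatorname{tor}} \simeq H_n(G,\z)_{\operatorname{tor}}$. The hypothesis that $H^{n+1}(G,\z)$ is finitely generated is then automatic; it only ensures that the rank is well defined as stated.

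I expect no serious obstacle. The only care needed is to invoke the Universal Coefficient Theorem correctly for group cohomology, viewing $H^*(G,\z)$ as the cohomology of $\operatorname{Hom}(F_\bullet,\z)$ for a free resolution $F_\bullet$ of $\z$ over $\z G$. The chain complex $F_\bullet\otimes_{G}\z$ consists of free abelian groups, which is exactly the hypothesis the algebraic Universal Coefficient Theorem needs (cf. \cite{brown1994}). One also needs to use finite generation of $H_n$ and $H_{n+1}$ to justify the decompositions above.
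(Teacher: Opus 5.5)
Your proof is correct. The paper does not prove this lemma itself; it only cites \cite[Lemma 4.3]{mirzaii-perez-2025}. Your Universal Coefficient Theorem computation, giving $H^{n+1}(G,\z)\simeq \z^{r}\oplus H_n(G,\z)_{\operatorname{tor}}$, is the standard proof of that statement, and as you note it also shows that the finite-generation hypothesis on $H^{n+1}(G,\z)$ is redundant.
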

\begin{proof}
    See \cite[Lemma 4.3]{mirzaii-perez-2025}.
\end{proof}

\begin{dfn}
    A group $G$ is of type $FP$ (resp. $FL$) if $\z$ admits a finite resolution
    \[ 0 \rightarrow P_{n} \rightarrow P_{n-1} \rightarrow \cdots \rightarrow P_{0} \rightarrow \z \rightarrow 0,\]
    where each $P_i$ is a finitely generated projective (resp. free) $\z[G]$-module.
\end{dfn}
\begin{dfn}
    A group $G$ is of type $FP_{n}$ if it admits a resolution 
    \[ \cdots \rightarrow P_{n} \rightarrow P_{n-1} \rightarrow \cdots \rightarrow P_{0} \rightarrow \z \rightarrow 0,\]
    where each $P_i$ is a finitely generated projective $\z[G]$-module for $i\leq n$. If $G$ is of type $FP_n$, for every $n \geq 0$, we say that $G$ is of type $FP_{\infty}$.
\end{dfn}

\begin{prp} \label{FP-homology}
    If $G$ is a group of type $FP_n$, then $H_k(G, \z)$ and $H^{k}(G, \z)$ are finitely generated, for every $0 \leq k \leq n$.
\end{prp}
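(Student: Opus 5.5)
The plan is to compute homology with a resolution of $\z$ that is finitely generated in low degrees. If $G$ is of type $FP_n$, I fix a projective resolution $P_\bullet \to \z$ over $\z[G]$ in which $P_0, \dots, P_n$ are finitely generated. The homology $H_k(G,\z)$ is the $k$-th homology of $P_\bullet \otimes_{\z[G]} \z$, and the cohomology $H^k(G,\z)$ is the $k$-th cohomology of $\operatorname{Hom}_{\z[G]}(P_\bullet,\z)$.

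First I check that the relevant terms are finitely generated abelian groups. If $P$ is a finitely generated projective $\z[G]$-module, it is a direct summand of some $\z[G]^m$. Then $P \otimes_{\z[G]} \z$ is a direct summand of $\z^m$, and $\operatorname{Hom}_{\z[G]}(P,\z)$ is a direct summand of $\operatorname{Hom}_{\z[G]}(\z[G]^m,\z) \simeq \z^m$. So both are finitely generated abelian groups.

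Next I deal with degree $k \le n$.

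\textbf{Homology.} $H_k$ is $\ker(d_k)/\im(d_{k+1})$, where $\ker(d_k)$ lies inside $P_k \otimes \z$. Since $k \le n$, the module $P_k \otimes \z$ is finitely generated. Because $\z$ is Noetherian, the subgroup $\ker(d_k)$ is finitely generated, and so is its quotient $H_k$. Only the finiteness of $P_k$ is needed here.

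\textbf{Cohomology.} $H^k$ is a subquotient of $\operatorname{Hom}(P_k,\z)$, which is finitely generated for $k \le n$. The same Noetherian argument then shows $H^k$ is finitely generated. The term $P_{k+1}$ only enters through the image of the coboundary, which we quotient out, so its finiteness is never used.

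The only subtle point is making sure the argument never needs $P_{n+1}$ to be finitely generated. It does not: in both cases the relevant group is a subquotient of a term indexed by $k \le n$.

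Alternatively, the standard fact \cite[Chap. VIII]{brown1994} could be cited directly.
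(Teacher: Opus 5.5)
Your proof is correct and takes essentially the paper's route: the paper just cites the exercise in Brown's book, and the standard solution to that exercise is exactly your argument that $H_k$ and $H^k$ are subquotients of finitely generated $\z$-modules. One small slip: in the cohomology case $P_{k+1}$ enters through the cocycle condition, i.e.\ the kernel of $\operatorname{Hom}_{\z[G]}(P_k,\z)\to\operatorname{Hom}_{\z[G]}(P_{k+1},\z)$, not through the image you quotient out (that image comes from $P_{k-1}$); this does not affect your conclusion, since that kernel is still a subgroup of the finitely generated group $\operatorname{Hom}_{\z[G]}(P_k,\z)$.
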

\begin{proof}
    See \cite[Exercise 1, \S5, Chap. VIII]{brown1994}.
\end{proof}
%\begin{proof}
    %{\color{red}{(I think this an exercise in Brown's book! Maybe just refer to it.)}} Let
    %\[\cdots \rightarrow P_{n} \rightarrow P_{n-1} \rightarrow \cdots \rightarrow P_{0} \rightarrow \z \rightarrow 0\]
    %be a projective resolution of the $\z[G]$ module $\z$, where each $P_i$ is finitely generated for $i \leq n$. $H_k(G, \z)$ is the homology of the chain complex:
    %\[\cdots \overset{\partial_{n+1}}{\rightarrow} P_{n} \otimes_{\z[G]} \z \overset{\partial_n}{\rightarrow} P_{n-1} \otimes_{\z[G]} \z \overset{\partial_{n-1}}{\rightarrow} \cdots \overset{\partial_1}{\rightarrow} P_{0} \otimes_{\z[G]} \z \arr 0.\]
    %Each $P_i \otimes_{\z[G]} \z$ is a finitely generated $\z$-module (for $i \leq n$) and $\z$ is Noetherian. Since $\ker(\partial_i)$ is a submodule of $P_i \otimes_{\z[G]} \z$, it must be finitely generated. Consequently,
    %\[H_i(G, \z) = \frac{\ker(\partial_i)}{\im(\partial_{i+1})}\]
    %is finitely generated.

    %The result for cohomology is analogous: consider the chain complex obtained by applying $\operatorname{Hom}_{\z[G]}(-, \z)$:
    %\[0 \rightarrow \operatorname{Hom}_{\z[G]}(P_0, \z) \overset{\delta_1}{\rightarrow} \operatorname{Hom}_{\z[G]}(P_1, \z) \overset{\delta_2}{\rightarrow} \cdots \overset{\delta_{n-1}}{\rightarrow} \operatorname{Hom}_{\z[G]}(P_n, \z) \overset{\delta_n}{\rightarrow} \cdots\]
    %One can check that, for $i \leq n$, $\operatorname{Hom}_{\z[G]}(P_i, \z)$ is a finitely generated $\z$-module, so we can proceed as before.
%\end{proof}

\begin{lem} \label{h-fg-nf}
    Suppose $A = \mathcal{O}_{K, S}$, where $K$ is a number field. Then, for every $k \geq 0$, $H_k(\hat{\Gg}(A, \ppp), \z)$ and $H^{k}(\hat{\Gg}(A, \ppp), \z)$ are finitely generated.
\end{lem}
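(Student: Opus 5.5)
We reduce the statement to a finiteness property of the ambient group $\SL_2(A)$ and then pass to finite-index subgroups. By Proposition \ref{FP-homology}, it suffices to show that each $\hat{\Gg}(A, \ppp)$ is of type $FP_\infty$. Two facts drive the argument: $\SL_2(\OO_{K,S})$ is of type $FP_\infty$, and $FP_\infty$ passes to subgroups of finite index.

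\textbf{Step 1: $\SL_2(A)$ is of type $FP_\infty$.} Since $K$ is a number field, $\SL_2(\OO_{K,S})$ is an $S$-arithmetic group in the reductive group $\SL_2$ over $K$. By the theorem of Borel--Serre, extended to the $S$-arithmetic case (Borel--Serre, \emph{Cohomologie d'immeubles et de groupes $S$-arithmétiques}), $\SL_2(A)$ has a torsion-free subgroup $\Gg'$ of finite index. This $\Gg'$ acts freely and cocompactly on a contractible space, namely the product of the Borel--Serre bordification of the symmetric space at the archimedean places with the Bruhat--Tits trees at the finite places in $S$. Consequently, $\Gg'$ admits a finite $K(\Gg',1)$ and is of type $FL$, hence of type $FP_\infty$. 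Since $FP_\infty$ is inherited by finite-index overgroups (Brown, Chap. VIII, Prop. 5.1), $\SL_2(A)$ is of type $FP_\infty$.

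\textbf{Step 2: pass to $\hat{\Gg}(A,\ppp)$.} The group $\kappa(\ppp)$ is finite, so $\Gg(A,\ppp)$ has finite index in $\SL_2(A)$, being the kernel of a map to the finite group $\SL_2(A/\ppp)$. It follows that $\Gg_1(A,\ppp)$ and $\Gg_0(A,\ppp)$, which contain $\Gg(A,\ppp)$, are also of finite index. Type $FP_n$ is inherited by finite-index subgroups, because a finitely generated projective $\z[G]$-module restricts to a finitely generated projective $\z[H]$-module when $[G:H]<\infty$. Therefore each $\hat{\Gg}(A,\ppp)$ is of type $FP_\infty$, and Proposition \ref{FP-homology} gives finite generation of $H_k$ and $H^k$ for all $k$.

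The main obstacle is Step 1, which cannot be proved internally and must be cited. Two points need care in the citation. First, one must quote the $S$-arithmetic version of Borel--Serre, and not only the arithmetic one, since $|S|$ may exceed $|S_\infty|$. Second, one must justify the existence of a torsion-free finite-index subgroup, which follows from Selberg's lemma or by taking a principal congruence subgroup of suitable level. The restriction to number fields is essential here: in the function field case, $S$-arithmetic groups are only $FP_{|S|-1}$ (Stuhler, Bux--Wortman). This explains the separate hypothesis $|S|\ge 3$ in the paper's torsion proposition for function fields.
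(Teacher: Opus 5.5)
Your proof is correct and follows the paper's argument. The ingredients are the same: a torsion-free finite-index subgroup is of type $FL$ by Borel--Serre, type $FP_\infty$ transfers along finite-index inclusions (Brown, Proposition VIII.5.1), and Proposition~\ref{FP-homology} finishes.

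The only difference is the route. You go up to $\SL_2(A)$ and then restrict down, whereas the paper takes a torsion-free finite-index subgroup directly inside $\hat{\Gg}(A,\ppp)$. Your explicit remark that $\hat{\Gg}(A,\ppp)$ has finite index in $\SL_2(A)$ is needed for Borel--Serre to apply to its torsion-free subgroups, and the paper leaves it implicit.
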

\begin{proof}
    By Selberg's Lemma (see \cite{alperin}), $\hat{\Gg}(A, \ppp)$ contains a torsion-free subgroup $H$, which is of finite-index in $\hat{\Gg}(A, \ppp)$. $H$ is of type $FL$ \cite[Theorem 6.2]{borel-serre1976}, hence it is of type $FP_{\infty}$. Using \cite[Proposition 5.1, Chapter 8]{brown1994} , we conclude that $\hat{\Gg}(A, \ppp)$ is also of type $FP_{\infty}$. The result now follows from Proposition \ref{FP-homology}.
\end{proof}
\begin{lem}\label{h-fg-ff}
    Suppose $A = \mathcal{O}_{K, S}$, where $K$ is a function field. Then, for  $0 \leq k \leq |S| -1$, $H_k(\hat{\Gg}(A, \ppp), \z)$ and $H^{k}(\hat{\Gg}(A, \ppp), \z)$ are finitely generated.
\end{lem}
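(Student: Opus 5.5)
The plan mirrors Lemma \ref{h-fg-nf}, replacing the Borel--Serre finiteness result for number fields with the corresponding finiteness theorem for $S$-arithmetic groups over function fields. In positive characteristic, $\SL_2(\OO_{K,S})$ is of type $FP_{|S|-1}$ but not of type $FP_{|S|}$; this is Stuhler's theorem for $\SL_2$, a special case of the rank theorem of Bux--K\"ohl--Witzel. Hence we cannot expect type $FP_\infty$, and this explains the bound $0 \le k \le |S|-1$ in the statement.

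First, we invoke Stuhler's theorem to conclude that $\SL_2(A)$ is of type $FP_{|S|-1}$. Second, since $\kappa(\ppp)$ is finite, each of $\Gg(A,\ppp)$, $\Gg_1(A,\ppp)$ and $\Gg_0(A,\ppp)$ has finite index in $\SL_2(A)$: they contain $\Gg(A,\ppp)$, which is the kernel of $\SL_2(A)\to\SL_2(\kappa(\ppp))$. Third, the property $FP_n$ passes to finite-index subgroups, by \cite[Proposition 5.1, Chap. VIII]{brown1994}. The point is that a finitely generated projective $\z[G]$-module restricts to a finitely generated projective $\z[H]$-module when $[G:H]<\infty$, so a partial resolution of $\z$ over $\z[G]$ that is finitely generated up to degree $n$ restricts to one over $\z[H]$. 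Therefore $\hat{\Gg}(A,\ppp)$ is of type $FP_{|S|-1}$. Proposition \ref{FP-homology} then shows that $H_k(\hat{\Gg}(A,\ppp),\z)$ and $H^k(\hat{\Gg}(A,\ppp),\z)$ are finitely generated for $0\le k\le |S|-1$.

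Note that, unlike the number field case, we cannot pass through a torsion-free subgroup and type $FL$. In positive characteristic, Selberg's lemma fails in general; there are only virtually torsion-free statements of the form ``$p'$-torsion free''. In any case, type $FL$ would force $FP_\infty$, which is false here. So the argument works directly with $\SL_2(A)$ and then restricts to finite-index subgroups.

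The main obstacle is locating the precise finiteness statement with the right degree. Stuhler proved that $\SL_2(\OO_{K,S})$ is of type $FP_{|S|-1}$ via its action on the product of $|S|$ Bruhat--Tits trees. If only a homotopical version is available (type $F_{|S|-1}$), that is enough, since $F_n$ implies $FP_n$. The remaining steps are formal. This lemma is what forces the assumption $|S|\ge 3$ in Proposition E for function fields: Lemma \ref{coh-torsion} with $n=1$ needs finite generation up to degree $2$, so we need $|S|-1\ge 2$.
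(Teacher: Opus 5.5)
Your proposal is correct and follows essentially the same route as the paper: Stuhler's finiteness theorem gives type $FP_{|S|-1}$, this transfers to the finite-index subgroup $\hat{\Gg}(A,\ppp)$, and Proposition~\ref{FP-homology} finishes. The only difference is cosmetic. The paper applies Stuhler's Theorem 6 directly to groups commensurable with $\SL_2(\OO_{K,S})$, whereas you apply Stuhler to $\SL_2(A)$ itself and then make the finite-index descent explicit via \cite[Proposition 5.1, Chap. VIII]{brown1994}.
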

\begin{proof}

%\begin{rem}
%    $\PGL_2(A)/\PSL_2(A) \simeq A^{\times}/{A^{\times}}^2$, and for us $|A^{\times}|$ is infinite.
%\end{rem}
%\begin{rem}
%Let $A$ be a Dedekind domain of arithmetic type with infinitely many units. It is well-known that $\SL_2(A):=\ker(\det_2)$, where $\det_2: \GL_2(A) \arr \aa$ is the determinant homomorphism. As $A$ is a $\GE_2$-ring, we have that $\Ee_2(A)$ is normal in 
%$\GL_2(A)$. This is a notable fact because there are examples (due to A. Suslin, see 
%\cite{sus1976}) of Dedekind domains for which $\Ee_2(R)$ is not normal in $\GL_2(R)$.
%\end{rem}

    The group $\hat{\Gg}(A, \ppp)$ is commensurable with $\SL_2(\OO_{K,S})$ because it is a subgroup of finite index. This implies, by \cite[Theorem 6]{stuhler1980}, that $\hat{\Gg}(A, \ppp)$ is $FP_{|S| - 1}$.
    %Notice that $\hat{\Gg}(A, \ppp)/\{\pm I\}$ is a subgroup of finite index of $\PGL_2(A)$ 
    %(since $\PSL_2(A)$ has finite index in $\PGL_2(A)$ {\color{red}{(Why? Check this!)}}). Then $\hat{\Gg}(A, \ppp)/\{\pm I\}$ is commensurable with $\PGL_2(A)$, which implies $\hat{\Gg}(A, \ppp)/\{\pm I\}$ is $FP_{|S| - 1}$ by \cite[Theorem 6]{stuhler1980}. Now, there is an exact sequence
    %\[ 0 \rightarrow \{\pm I\} \rightarrow \hat{\Gg}(A, \ppp) \rightarrow \hat{\Gg}(A, \ppp)/\{\pm I\} \rightarrow 0,\]
    %where $\{\pm I\}$ and $\hat{\Gg}(A, \ppp)/\{\pm I\}$ are $FP_{|S| - 1}$. Using \cite[Proposition 2.7]{bieri-hom-dimension}, we conclude that $\hat{\Gg}(A, \ppp)$ is $FP_{|S| - 1}$.
\end{proof}

\begin{rem}
    Stuhler explains in the introduction of \cite{stuhler1980} that the results in his article are stated for $\PGL_2(\OO_{K,S})$ but they also hold for $\GL_2(\OO_{K,S})$ and $\SL_2(\OO_{K,S})$.
\end{rem}

\begin{prp}\label{torsion-prp}
    Assume $A = \mathcal{O}_{K, S}$ is not totally imaginary, with $|S| \geq 2$ if $K$ is a number field and $|S| \geq 3$ if $K$ is a function field. Let $(0) \neq \ppp \in \Spec(A)$. Then, 
    \[H^{2}(\hat{\Gg}(A, \ppp), \z)_{\operatorname{tor}} \simeq H_1(\hat{\Gg}(A, \ppp), \z)_{\operatorname{tor}}.\]
\end{prp}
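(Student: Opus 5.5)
The plan is to apply Lemma \ref{coh-torsion} with $n=1$ and $G=\hat{\Gg}(A,\ppp)$. That lemma gives $H_1(G,\z)_{\operatorname{tor}}\simeq H^{2}(G,\z)_{\operatorname{tor}}$ as soon as $H_1(G,\z)$, $H_2(G,\z)$ and $H^2(G,\z)$ are finitely generated. The whole argument therefore reduces to verifying these finite generation hypotheses, and the two cases of the statement are treated separately.

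\emph{Number field case.} If $K$ is a number field, Lemma \ref{h-fg-nf} applies directly. It gives that $H_k(\hat{\Gg}(A,\ppp),\z)$ and $H^k(\hat{\Gg}(A,\ppp),\z)$ are finitely generated for every $k\geq 0$, in particular for $k=1,2$. Lemma \ref{coh-torsion} then yields the isomorphism. The hypothesis $|S|\geq 2$ is not needed for finite generation here. It is nonetheless consistent with the standing assumptions of Section \ref{sec3}, under which $H_1(\hat{\Gg}(A,\ppp),\z)$ was computed explicitly.

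\emph{Function field case.} If $K$ is a function field, Lemma \ref{h-fg-ff} gives finite generation of $H_k$ and $H^k$ for $0\leq k\leq |S|-1$. The assumption $|S|\geq 3$ is exactly what ensures $2\leq |S|-1$, so $H_1$, $H_2$ and $H^2$ are all finitely generated. Lemma \ref{coh-torsion} with $n=1$ again gives the result.

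The only delicate point is the function field case, where finite generation is available only up to degree $|S|-1$ through Stuhler's theorem. The bound $|S|\geq 3$ is precisely what is needed to reach degree $2$, so I expect no real obstacle beyond checking this index. As an optional final step, I could combine the result with Theorems \ref{H1(GammaGlob1)}, \ref{Gg_1Glob1} and \ref{Gg_0Glob} to make the torsion explicit. Since $H_1$ is finite in those cases, $H^2(\hat{\Gg}(A,\ppp),\z)_{\operatorname{tor}}$ would then be isomorphic to the whole computed group $H_1(\hat{\Gg}(A,\ppp),\z)$.
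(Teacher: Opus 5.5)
Your proposal is correct and is the paper's own argument: apply Lemma \ref{coh-torsion} with $n=1$, taking finite generation of $H_1$, $H_2$ and $H^2$ from Lemma \ref{h-fg-nf} in the number field case, and from Lemma \ref{h-fg-ff} in the function field case, where $|S|\geq 3$ ensures $2\leq |S|-1$. Your side remarks are also accurate: $|S|\geq 2$ is not needed for finite generation over number fields, and $H_1$ is finite in the computed cases, so there the torsion is all of $H_1$.
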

\begin{proof}
    Follows directly from Lemmas \ref{coh-torsion}, \ref{h-fg-nf}, \ref{h-fg-ff} and Proposition \ref{FP-homology}.
\end{proof}
\begin{cor}
Assume $A = \mathcal{O}_{K, S}$ is not totally imaginary, with $|S| \geq 2$ if $K$ is a number field and $|S| \geq 3$ if $K$ is a function field. If $(0) \neq \ppp \in \Spec(A)$ and $\char(\kappa(\ppp)) \neq 2$, then
    \begin{enumerate}
        \item[(i)] For $|\kappa(\ppp)| \geq 4$, 
        \[H^{2}(\Gg(A, \ppp), \z)_{\operatorname{tor}} \simeq \kappa(\ppp)^3 \oplus H_1(\SL_2(A), \z).\]
        \item[(ii)] For $|\kappa(\ppp)| = 3$, 
        \[H^{2}(\Gg(A, \ppp), \z)_{\operatorname{tor}} \simeq \kappa(\ppp)^2 \oplus H_1(\SL_2(A), \z).\] 
    \end{enumerate}
\end{cor}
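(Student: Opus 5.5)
The corollary follows by combining Proposition \ref{torsion-prp} with Theorem \ref{H1(GammaGlob1)}. The hypotheses match. $A$ is not totally imaginary, and $|S|\geq 2$, so by Theorem \ref{dir} $A$ has infinitely many units. We also have $(0)\neq\ppp$ and $\char(\kappa(\ppp))\neq 2$. Applying Proposition \ref{torsion-prp} to $\hat{\Gg}(A,\ppp)=\Gg(A,\ppp)$ gives
\[H^{2}(\Gg(A,\ppp),\z)_{\operatorname{tor}}\simeq H_1(\Gg(A,\ppp),\z)_{\operatorname{tor}}.\]
The remaining work is to identify the right-hand side using Theorem \ref{H1(GammaGlob1)}.

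By Theorem \ref{H1(GammaGlob1)}, $H_1(\Gg(A,\ppp),\z)$ is isomorphic to $\kappa(\ppp)^3\oplus H_1(\SL_2(A),\z)$ when $|\kappa(\ppp)|\geq 4$, and to $\kappa(\ppp)^2\oplus H_1(\SL_2(A),\z)$ when $|\kappa(\ppp)|=3$. Torsion subgroups commute with finite direct sums. The summands $\kappa(\ppp)^3$ and $\kappa(\ppp)^2$ are finite, since $\kappa(\ppp)$ is a finite field, so they are entirely torsion. So the corollary reduces to showing that $H_1(\SL_2(A),\z)$ is itself torsion, i.e.\ equal to its own torsion subgroup.

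This last point is the only step needing an argument beyond citation. By the Congruence Subgroup Property and \cite[Theorem 3]{serre1970}, $[\SL_2(A),\SL_2(A)]$ is a normal noncentral subgroup and hence contains some $\Gg(A,J)$ with $J\neq 0$. This is exactly what is used in Lemma \ref{H1-SL2-A-J'}. Hence $\SL_2(A)^{\ab}\simeq\SL_2(A/J)^{\ab}$, which is finite because $A/J$ is finite. Alternatively, one can cite the explicit finite description in \cite[Theorems 3.1 and 3.2]{BBT2025-2}. Therefore $H_1(\SL_2(A),\z)_{\operatorname{tor}}=H_1(\SL_2(A),\z)$, and both (i) and (ii) follow.

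I expect no real obstacle here, only the bookkeeping of checking that the hypotheses of Proposition \ref{torsion-prp} and Theorem \ref{H1(GammaGlob1)} are simultaneously satisfied.
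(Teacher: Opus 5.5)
Your proposal is correct and matches the paper's intended argument. The paper gives no separate proof and obtains the corollary directly from Proposition \ref{torsion-prp} together with Theorem \ref{H1(GammaGlob1)}. Your extra step, showing that $H_1(\SL_2(A),\z)$ is finite so that passing to torsion leaves the right-hand side unchanged, is something the paper leaves implicit. Your justification via the congruence subgroup property (or via \cite{BBT2025-2}) fills that gap correctly.
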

\begin{cor}
Assume $A = \mathcal{O}_{K, S}$ is not totally imaginary, with $|S| \geq 2$ if $K$ is a number field and $|S| \geq 3$ if $K$ is a function field. If $(0) \neq \ppp \in \Spec(A)$ and $|\kappa(\ppp)| \geq 3$, then
\[H^{2}(\Gg_1(A, \ppp), \z)_{\operatorname{tor}} \simeq \kappa(\ppp) \oplus H_1(\SL_2(A), \z).\]
\end{cor}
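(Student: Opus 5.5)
The plan is to combine Proposition \ref{torsion-prp} with Theorem \ref{Gg_1Glob1}, in the same way the preceding corollary combined it with Theorem \ref{H1(GammaGlob1)}.

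\emph{Step 1: check the hypotheses.} The conditions on $|S|$, together with $A$ not being totally imaginary, are exactly the hypotheses of Proposition \ref{torsion-prp} applied to $\hat{\Gg}(A,\ppp)=\Gg_1(A,\ppp)$. They also give what Theorem \ref{Gg_1Glob1} needs. By Theorem \ref{dir}, $|S|\geq 2$ means $A$ has infinitely many units. In the function field case we assume $|S|\geq 3$, which is stronger still. So
\[
H^{2}(\Gg_1(A,\ppp),\z)_{\operatorname{tor}} \simeq H_1(\Gg_1(A,\ppp),\z)_{\operatorname{tor}}
\]
holds. Since $|\kappa(\ppp)|\geq 3$, Theorem \ref{Gg_1Glob1} then gives
\[
H_1(\Gg_1(A,\ppp),\z)\simeq \kappa(\ppp)\oplus H_1(\SL_2(A),\z).
\]

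\emph{Step 2: identify the torsion subgroup.} The additive group of the finite field $\kappa(\ppp)$ is finite, hence torsion. It remains to see that $H_1(\SL_2(A),\z)$ is torsion, i.e.\ finite.

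For this I would reuse the argument of Lemma \ref{H1-SL2-A-J'}. By Serre's theorem and the Congruence Subgroup Property, as in the proof of Theorem \ref{H1(GammaGlob1)}, the commutator subgroup $[\SL_2(A),\SL_2(A)]$ contains some $\Gg(A,J)$ with $J\neq 0$. Lemma \ref{h1ofcong} then gives $\SL_2(A)^{\ab}\simeq \SL_2(A/J)^{\ab}$. Since $J\neq 0$, the ring $A/J$ is finite, so this group is finite. (Alternatively, one can read finiteness off \cite[Theorems 3.1 and 3.2]{BBT2025-2}.)

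Putting the two steps together, the torsion subgroup of $H_1(\Gg_1(A,\ppp),\z)$ is the whole group, and the claimed isomorphism follows.

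\emph{Main obstacle.} The only non-bookkeeping point is the finiteness of $H_1(\SL_2(A),\z)$ in Step 2. This is needed so that the torsion subgroup really is all of $\kappa(\ppp)\oplus H_1(\SL_2(A),\z)$, and the Congruence Subgroup Property argument above is what secures it.
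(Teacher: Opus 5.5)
Your proposal is correct and matches how the paper obtains this corollary: the paper gives no separate proof, and the result is the combination of Proposition \ref{torsion-prp} with Theorem \ref{Gg_1Glob1}. Your Step 2 checks that $H_1(\SL_2(A),\z)$ is finite, so that all of $\kappa(\ppp)\oplus H_1(\SL_2(A),\z)$ is torsion; the paper leaves this implicit, relying on the known computation in \cite{BBT2025-2}, and your congruence-subgroup argument via Lemma \ref{h1ofcong} supplies it correctly.
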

\begin{cor}
    Assume $A = \mathcal{O}_{K, S}$ is not totally imaginary, with $|S| \geq 2$ if $K$ is a number field and $|S| \geq 3$ if $K$ is a function field. If $(0) \neq \ppp \in \Spec(A)$, then
    \begin{enumerate}
        \item[(i)] For $|\kappa(\ppp)| \geq 4$, 
        \[H^{2}(\Gg(A, \ppp), \z)_{\operatorname{tor}} \simeq \kappa(\ppp)^\times \oplus H_1(\SL_2(A), \z).\]
        \item[(ii)] For $|\kappa(\ppp)| = 3$, 
        \[H^{2}(\Gg(A, \ppp), \z)_{\operatorname{tor}} \simeq \kappa(\ppp) \oplus \kappa(\ppp)^{\times} \oplus H_1(\SL_2(A), \z).\] 
    \end{enumerate}
\end{cor}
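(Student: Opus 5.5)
The plan is to combine Proposition \ref{torsion-prp} with the computation of the first homology of $\Gg_0(A,\ppp)$ in Theorem \ref{Gg_0Glob}. The right-hand sides in (i) and (ii), namely $\kappa(\ppp)^\times \oplus H_1(\SL_2(A),\z)$ and $\kappa(\ppp)\oplus\kappa(\ppp)^\times\oplus H_1(\SL_2(A),\z)$, are exactly those of Theorem \ref{Gg_0Glob}. So I read the group in the statement as $\Gg_0(A,\ppp)$, the case $\hat{\Gg}=\Gg_0$; as literally written with $\Gg$, it would clash with the preceding corollary. The argument then has three steps.

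First, I check that the hypotheses of Theorem \ref{Gg_0Glob} hold. In both the number field and the function field case we assume $|S|\ge 2$, so by the $S$-unit Theorem \ref{dir} the ring $A$ has infinitely many units. Together with the assumption that $A$ is not totally imaginary, Theorem \ref{Gg_0Glob} applies and gives $H_1(\Gg_0(A,\ppp),\z)$ in cases (i) and (ii).

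Second, I apply Proposition \ref{torsion-prp} with $\hat{\Gg}=\Gg_0$. Its hypotheses ($|S|\ge2$ for number fields, $|S|\ge3$ for function fields, $A$ not totally imaginary) are exactly those of the corollary. It yields $H^2(\Gg_0(A,\ppp),\z)_{\operatorname{tor}}\simeq H_1(\Gg_0(A,\ppp),\z)_{\operatorname{tor}}$.

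Third, I show that $H_1(\Gg_0(A,\ppp),\z)$ is entirely torsion, so its torsion subgroup is the whole group. The summands $\kappa(\ppp)$ and $\kappa(\ppp)^\times$ are finite because $A/\ppp$ is finite. For $H_1(\SL_2(A),\z)$, the proof of Lemma \ref{H1-SL2-A-J'} gives $\SL_2(A)^{\ab}\simeq \SL_2(A/J)^{\ab}$ for a nonzero ideal $J$. Such a $J$ exists: $[\SL_2(A),\SL_2(A)]$ is a normal noncentral subgroup, so \cite[Theorem 3]{serre1970} and the Congruence Subgroup Property apply as in the proof of Theorem \ref{H1(GammaGlob1)}. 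Since $A/J$ is finite, $H_1(\SL_2(A),\z)$ is finite.

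The only delicate point is this finiteness of $H_1(\SL_2(A),\z)$; everything else is direct substitution. Combining the three steps gives (i) and (ii).
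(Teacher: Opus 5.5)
Your proposal is correct and is essentially the paper's own (unwritten) argument: you rightly read the group as $\Gg_0(A,\ppp)$ rather than $\Gg(A,\ppp)$, and the corollary follows by combining Proposition \ref{torsion-prp} with Theorem \ref{Gg_0Glob}. Your third step makes explicit something the paper leaves tacit, namely that $H_1(\Gg_0(A,\ppp),\z)$ is finite, so its torsion subgroup is the whole group. This also follows directly from the fact that, in the setting of Theorem \ref{DDab}, it is a quotient of the finite group $\Gg_0(A,\ppp)/\Gg(A,J)$.
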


\end{document}